\DeclareRobustCommand{\SkipTocEntry}[4]{}
\begin{document}

\newtheorem{teo}{Theorem}[section]
\newtheorem{cor}[teo]{Corollary}
\newtheorem{lemma}[teo]{Lemma}
\newtheorem{prop}[teo]{Proposition}
\newtheorem{nota}{Notation}
\newtheorem{defn}[teo]{Definition}
\newtheorem*{defn*}{Definition}
\newtheorem*{teoA}{Theorem A}
\newtheorem*{teoB}{Theorem B} 
\newtheorem*{teoC}{Theorem C}
\newtheorem*{teoD}{Theorem D}
\newtheorem*{teop}{Theorem 1}
\newtheorem*{coro}{Corollary}
\newtheorem{conj}{Conjecture}
\newtheorem*{inv}{Invariance Principle}
\newtheorem*{af}{Claim}
\newtheorem{obs}[teo]{Remark}
\newtheorem{corol}{Corollary}

\title[Lyapunov exponents of partially hyperbolic maps]{Lyapunov exponents of partially hyperbolic volume-preserving maps with 2-dimensional center bundle}

\author{Chao Liang}
\thanks{C.L. has been supported by NNSFC(\#11471344) and CUFE Young Elite Teacher Project (\#QYP1705).}
\address{School of Statistics and Mathematics, Central University of Finance and Economics, Beijing, 100081, China}
\email{chaol@cufe.edu.cn}

\author{Karina Marin}
\thanks{K.M. has been supported by CAPES}
\address{IMPA- Estrada D. Castorina 110, Jardim Bot\^anico, 22460-320 Rio de Janeiro -Brazil.}
\email{kmarin@impa.br}

\author{Jiagang Yang}
\thanks{J.Y. has been partially supported by CNPq, FAPERJ, and PRONEX}
\address{Departamento de Geometria, Instituto de Matem\'atica e Estat\'istica, Universidade Federal Fluminense, Niter\'oi, Brazil}
\email{yangjg@impa.br}

\begin{abstract}
We consider the set of partially hyperbolic symplectic diffeomorphisms which are accessible, have 2-dimensional center bundle and satisfy some pinching and bunching conditions. In this set, we prove that the non-uniformly hyperbolic maps are $C^r$ open and there exists a $C^r$ open and dense subset of continuity points for the center Lyapunov exponents. We also generalize these results to volume-preserving systems. 
\end{abstract}

	
\maketitle
\section{Introduction}

Lyapunov exponents play a key role in understanding the ergodic behavior of a dynamical system. For this reason, it is important to be able to control how they vary with the dynamics and to avoid zero Lyapunov exponents. 

One speaks of \emph{non-uniform hyperbolicity} when all the Lyapunov exponents are different from zero almost everywhere with respect to some preferred invariant measure (for instance, a volume measure). This theory was initiated by Pesin and has many important consequences, most notably: the stable manifold theorem (Pesin \cite{P}), the abundance of periodic points and Smale horseshoes (Katok \cite{K}) and the fact that the fractal dimension of invariant measures is well defined (Ledrappier and Young \cite{LY} and Barreira, Pesin and Schmelling \cite{BPS}).

In the context of partially hyperbolic volume-preserving systems we study the following classical problems: openness of the set of non-uniformly hyperbolic diffeomorphisms and continuity of the center Lyapunov exponents for the $C^r$ topology with $r\geq2$. 

For the $C^1$ topology, Ma\~n\'e \cite{Ma} observed that an area-preserving diffeomorphism is a continuity point for the Lyapunov exponents only if it is either Anosov or all its Lyapunov exponents are equal to zero almost everywhere. His arguments were completed by Bochi \cite{B1} and were extended to arbitrary dimension by Bochi and Viana \cite{B2,BV1}. In particular, Bochi \cite{B2} proved that every partially hyperbolic symplectic diffeomorphism can be $C^1$-approximated by partially hyperbolic diffeomorphisms whose center Lyapunov exponents vanish. This implies that the set of non-uniformly hyperbolic systems is \emph{not} $C^1$ open. Our first result proves that the situation is different when we consider the $C^r$ topology with $r\geq 2$. 

Let $\mathcal{B}^r_{\omega}(M)$ denote the subset of partially hyperbolic symplectic systems which are accessible, have 2-dimensional center bundle and satisfy some pinching and bunching conditions. (All the keywords will be recalled in the next section). This set has two important properties: it is a $C^1$ open set and every $f\in \mathcal{B}^r_{\omega}(M)$ is ergodic. Moreover, by Theorem A in \cite{SW}, if $M=\mathbb{T}^{2d}$ with $d\geq 2$, $\mathcal{B}^r_{\omega}(M)$ is non-empty. 

\begin{teoA} Let $r\geq 2$. The subset of non-uniformly hyperbolic diffeomorphisms in $\mathcal{B}^r_{\omega}(M)$ is $C^r$ open.
\end{teoA}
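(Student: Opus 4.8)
The plan is to prove the equivalent statement that $\{f\in\mathcal{B}^r_\omega(M):\lambda^c(f)=0\}$ is closed inside $\mathcal{B}^r_\omega(M)$. Since $\mathcal{B}^r_\omega(M)$ is $C^1$-open, hence $C^r$-open, and since for a symplectic $f$ with $2$-dimensional center the two center Lyapunov exponents are of the form $\pm\lambda^c(f)$ with $\lambda^c(f)\geq 0$, which --- by ergodicity of $f\in\mathcal{B}^r_\omega(M)$ --- is a single number, the non-uniformly hyperbolic elements of $\mathcal{B}^r_\omega(M)$ are exactly those with $\lambda^c(f)>0$, and this set is $C^r$-open precisely when $\{\lambda^c=0\}$ is closed in $\mathcal{B}^r_\omega(M)$. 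So I will take $f_k\to f_0$ in $C^r$ with $f_k\in\mathcal{B}^r_\omega(M)$ and $\lambda^c(f_k)=0$, and show $\lambda^c(f_0)=0$.

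First, for each $k$ I will produce an $su$-state. The center bunching hypothesis makes the center cocycle $Df_k|_{E^c_k}$ fiber-bunched, so its stable and unstable holonomies exist and are H\"older; together with accessibility of $f_k$ and $\lambda^c(f_k)=0$, the invariance principle (Ledrappier; Avila--Viana) yields an $f_k$-invariant probability $m^k$ on the projectivized center bundle $\mathbb{P}E^c_k$, projecting to volume, with continuous disintegration $\{m^k_x\}$ invariant under both the stable and the unstable holonomies of the center cocycle of $f_k$. The pinching hypothesis (simplicity of the center spectrum over a periodic orbit) forces each $m^k_x$ to be supported on at most two points, namely the holonomy transports of the two eigendirections; and because center bunching is uniform on a $C^1$-neighbourhood of $f_0$, the supports of the $m^k$ are uniformly H\"older, hence, after identifying the nearby bundles, precompact for the uniform topology.

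Next I will pass to the limit. As $f_k\to f_0$ in $C^r$ with $r\geq 2$, the bundles $E^c_k$, the cocycles $Df_k|_{E^c_k}$ and --- by the uniform center bunching --- their stable and unstable holonomies converge; by Arzel\`a--Ascoli a subsequence of $\{m^k_x\}$ converges to the disintegration $\{m^\infty_x\}$ of an $f_0$-invariant probability $m^\infty$ on $\mathbb{P}E^c_0$ over volume, still invariant under the stable and the unstable holonomies of $f_0$ --- an $su$-state for $f_0$, which is itself accessible, center bunched and ergodic. Finally I will invoke the dichotomy in the invariance principle for $m^\infty$ over $(f_0,\mathrm{vol})$: either $\lambda^c(f_0)=0$, which is what I want, or, $m^\infty$ being a $u$-state, $m^\infty_x=\delta_{E^{c,u}_0(x)}$ a.e. (the $1$-dimensional unstable Oseledets line inside the center), while, $m^\infty$ being an $s$-state, $m^\infty_x=\delta_{E^{c,s}_0(x)}$ a.e.; since $E^{c,u}_0(x)\cap E^{c,s}_0(x)=\{0\}$ a.e. whenever $\lambda^c(f_0)>0$, the two identities are incompatible. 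Hence $\lambda^c(f_0)=0$.

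The step I expect to be the main obstacle is the passage to the limit: showing that the $su$-states of the $f_k$ form a precompact family whose accumulation points are $su$-states of $f_0$ requires the stable and unstable holonomies of the center cocycle to depend continuously on $f$ near $f_0$, with uniform H\"older control. This is exactly where $r\geq 2$ (so that $Df|_{E^c}$ is $C^1$ and, together with bunching, has well-behaved holonomies) and the uniformity of the center bunching are indispensable, and it is the feature with no $C^1$ analogue --- consistently with Bochi's theorem that vanishing center exponents are $C^1$-dense among partially hyperbolic symplectic maps. A secondary delicate point is establishing the precise form of the invariance-principle dichotomy for continuous linear cocycles over an accessible partially hyperbolic base, used both to manufacture each $m^k$ and to close the argument for $f_0$.
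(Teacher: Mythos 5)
The fatal gap is in your final step, where you invoke a ``dichotomy'' asserting that if $\lambda^c(f_0)>0$ then every $u$-state for the center cocycle of $f_0$ is concentrated on the unstable Oseledets direction $E^+$ and every $s$-state on $E^-$, so that an $su$-state cannot exist. No such uniqueness statement holds; in fact the paper's entire mechanism rests on its failure. Proposition \ref{discont} shows that when $f_0$ is a $C^r$ discontinuity point for the center exponents --- which is exactly your situation, since $\lambda^c(f_k)\to 0\neq\lambda^c(f_0)$ --- both $m^+=\int\delta_{\mathbb{P}(E^+_x)}\,d\mu$ and $m^-=\int\delta_{\mathbb{P}(E^-_x)}\,d\mu$ are $su$-states, and so is every convex combination of them. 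The mere existence of an $su$-state $m^\infty$ for $f_0$ therefore yields no contradiction with $\lambda^c(f_0)>0$. The paper closes the argument very differently: each $m_k$ is chosen \emph{ergodic}; over a pinching hyperbolic periodic point $p(f_k)$ (present because the putative NUH limit $f_0$ has one by Katok's theorem, hence so do nearby $f_k$), accessibility and bi-invariance of the disintegration show that if $m_{k,p(f_k)}$ were a nontrivial mixture $t\delta_{a(f_k)}+(1-t)\delta_{b(f_k)}$, then $m_k$ would split as $t\,m_k^++(1-t)\,m_k^-$ into two distinct invariant measures, contradicting ergodicity. Hence along a subsequence $m_k$ converges to $m^+$ or to $m^-$, \emph{not} to a mixture, and the quantitative identity $\lambda_+(F_k)=\int\Phi_k\,dm_k\to\int\Phi\,dm^{\pm}=\pm\lambda_+(F)\neq 0$ contradicts $\lambda_+(F_k)=0$. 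The step you skipped --- forcing $m_k$ to accumulate on an extreme point $m^{\pm}$ rather than on a convex combination --- is the heart of the proof, and it is exactly here that ergodicity of $m_k$ and the periodic point enter.

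There is also a secondary gap in the passage to the limit. You invoke Arzel\`a--Ascoli for the disintegrations $x\mapsto m^k_x$, claiming uniform H\"older control from uniform center bunching. The paper explicitly acknowledges (Section 7) that it is unable to establish H\"older continuity of the continuous disintegrations produced by the Invariance Principle. The convergence of the conditionals is obtained instead by combining the holonomy-continuity estimates (Corollary \ref{hol a cont} and Corollary \ref{bound-sequence}) with the fact that the disintegrations are pinned down at the periodic point $p$ and then transported to arbitrary $x$ by holonomies along $su$-paths with a uniformly bounded number of nodes and node separation (Proposition \ref{uniform}). As stated, your compactness step is not justified.
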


The continuity of Lyapunov exponents has been extensively studied for the case of linear cocycles. Theorem C in \cite{B1} implies that discontinuity of Lyapunov exponents is typical for continuous $SL(2,\mathbb{R})$-valued cocycles. However, there are some contexts where continuity has been established. Bocker and Viana \cite{BV2} and Malheiro and Viana \cite{MV} proved continuity of Lyapunov exponents for random products of 2-dimensional matrices in the Bernoulli and in the Markov settings. More recently, still for 2-dimensional cocycles, Backes, Brown and Butler \cite{BBB} proved that continuity of Lyapunov exponents holds in the realm of fiber-bunched H\"older cocycles over any hyperbolic systems with local product structure. In higher dimension, continuity of the Lyapunov exponents for i.i.d. random products of matrices has been announced by Avila, Eskin and Viana \cite{AEV}. Our second theorem provides a result about continuity of Lyapunov exponents for diffeomorphisms. 

\begin{teoB} Let $r\geq 2$. There exists a $C^r$ open and dense subset $\mathcal{U}\subset \mathcal{B}^r_{\omega}(M)$ such that every $g\in \mathcal{U}$ is a continuity point for the center Lyapunov exponents in the $C^r$ topology. 
\end{teoB}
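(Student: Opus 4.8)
The plan is to single out the set $\mathcal U$ explicitly. Write $\lambda^{c}(f)\geq 0$ for the larger of the two center Lyapunov exponents of $f$ with respect to the volume $m$; by the symplectic symmetry the smaller one equals $-\lambda^{c}(f)$, so $f$ is non-uniformly hyperbolic exactly when $\lambda^{c}(f)>0$. Put $\mathcal N=\{f\in\mathcal B^{r}_{\omega}(M):\lambda^{c}(f)>0\}$, $\mathcal Z=\mathcal B^{r}_{\omega}(M)\setminus\mathcal N$, and
\[
\mathcal U=\mathcal N\cup\operatorname{int}(\mathcal Z).
\]
By Theorem A the set $\mathcal N$ is $C^{r}$ open, hence so is $\mathcal U$; and $\mathcal U$ is dense, because $\mathcal Z$ is closed, so any $f\notin\operatorname{int}(\mathcal Z)$ lies in $\overline{\mathcal B^{r}_{\omega}(M)\setminus\mathcal Z}=\overline{\mathcal N}\subseteq\overline{\mathcal U}$. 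On $\operatorname{int}(\mathcal Z)$ the function $\lambda^{c}$ vanishes identically, so Theorem B reduces to showing that $\lambda^{c}$ is continuous at every $f_{0}\in\mathcal N$.

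Upper semicontinuity of $\lambda^{c}$ on all of $\mathcal B^{r}_{\omega}(M)$ is the easy half: since each $f$ is ergodic, Kingman's subadditive theorem gives
\[
\lambda^{c}(f)=\inf_{n\geq 1}\frac1n\int_{M}\log\big\|Df^{n}|_{E^{c}_{f}(x)}\big\|\,dm(x),
\]
and the center bundle $E^{c}_{f}$ depends continuously (in $C^{0}$) on $f$ in the $C^{1}$ topology, so each term on the right is continuous in $f$; an infimum of continuous functions is upper semicontinuous.

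For lower semicontinuity at $f_{0}\in\mathcal N$ I would argue with invariant measures of the projective center cocycle. Let $A_{f}=Df|_{E^{c}_{f}}$, let $F_{f}$ be the induced map on $\mathbb P(E^{c}_{f})$, and set $\psi_{f}(x,[v])=\log(\|A_{f}(x)v\|/\|v\|)$. The bunching hypothesis makes $A_{f}$ fiber-bunched, so—and this is where $r\geq 2$ enters—$A_{f}$ admits stable and unstable holonomies which, together with $A_{f}$ and $F_{f}$, depend continuously on $f$ in the $C^{r}$ topology. Take $g_{n}\to f_{0}$ in $C^{r}$; eventually $g_{n}\in\mathcal B^{r}_{\omega}(M)$, and for each such $n$ pick any $u$-state $\hat m_{n}$ of $A_{g_{n}}$, that is, an $F_{g_{n}}$-invariant probability on $\mathbb P(E^{c}_{g_{n}})$ projecting to $m$ and invariant under the unstable holonomies of $g_{n}$; these always exist, and $\int\psi_{g_{n}}\,d\hat m_{n}\leq\lambda^{c}(g_{n})$ since $\int\psi\,d\hat m\leq\lambda^{c}$ for every invariant $\hat m$. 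Viewing the $\hat m_{n}$ as measures on $\mathbb P(TM)$ and passing to a subsequence, $\hat m_{n}\to\hat m$, which by the continuity statements above is a $u$-state of $A_{f_{0}}$.

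The crucial point—and the step I expect to be the main obstacle—is that when $f_{0}\in\mathcal N$ this $u$-state must be the graph of the top Oseledets line field: $\hat m=(\operatorname{id},E^{cu}_{f_{0}})_{*}m$, so that $\int\psi_{f_{0}}\,d\hat m=\lambda^{c}(f_{0})$. Indeed, the spectral gap $\lambda^{c}(f_{0})>0$ forces any $F_{f_{0}}$-invariant $\hat m$ over $m$ to disintegrate as $\int\big(t(x)\,\delta_{[E^{cs}_{f_{0}}(x)]}+(1-t(x))\,\delta_{[E^{cu}_{f_{0}}(x)]}\big)\,dm(x)$ with $t$ measurable and, by ergodicity, constant; since $E^{cu}_{f_{0}}$ is invariant under unstable holonomies (fiber-bunching), so is $(\operatorname{id},E^{cs}_{f_{0}})_{*}m$ whenever $t\neq 0$, and as the latter is also invariant under stable holonomies we would obtain an $su$-invariant probability for $A_{f_{0}}$, which by accessibility and the Avila--Viana invariance principle forces $\lambda^{c}(f_{0})=0$, a contradiction. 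Hence $t\equiv 0$, and therefore
\[
\liminf_{n}\lambda^{c}(g_{n})\ \geq\ \liminf_{n}\int\psi_{g_{n}}\,d\hat m_{n}\ =\ \int\psi_{f_{0}}\,d\hat m\ =\ \lambda^{c}(f_{0}).
\]
Combined with upper semicontinuity this proves continuity of $\lambda^{c}$ at $f_{0}$, and hence Theorem B. The delicate ingredients—existence and continuous dependence of the holonomies and the invariance principle, all of which genuinely require $r\geq 2$ together with bunching and accessibility—are precisely what is unavailable in the $C^{1}$ topology, in accordance with the Bochi--Viana discontinuity phenomenon.
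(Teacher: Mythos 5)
Your reduction to the set $\mathcal U=\mathcal N\cup\operatorname{int}(\mathcal Z)$ is sound and the openness/density is a correct use of Theorem~A, and your $u$-state limit argument up through the dichotomy $\hat m=t\,\delta_{[E^{cs}]}+(1-t)\,\delta_{[E^{cu}]}$ is a standard and valid step (it parallels the paper's Proposition~\ref{discont}). The fatal gap is the sentence ``\dots we would obtain an $su$-invariant probability for $A_{f_0}$, which by accessibility and the Avila--Viana invariance principle forces $\lambda^c(f_0)=0$.'' This is not what the invariance principle says, and it is false. Theorem~D of \cite{ASV} (Theorem~\ref{invariant} here) goes in the direction: \emph{if} $m$ is an $su$-state, \emph{then} it admits a continuous bi-invariant disintegration. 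It does \emph{not} say that the existence of an $su$-state forces the extremal exponents to vanish; the version with vanishing exponents (Theorem~C of \cite{ASV}) has the hypothesis $\lambda_\pm(\mathcal F)=0$ as \emph{input}, not output. Concretely, if $m^-=(\operatorname{id},E^{cs})_*\mu$ is an $su$-state, Theorem~\ref{invariant} merely produces a continuous line field extending the Oseledets stable direction, invariant under $\mathbb P(F)$ and under both holonomies; this is perfectly compatible with $\lambda^c(f_0)>0$. Indeed the paper's Proposition~\ref{discont} proves exactly that any $C^r$ discontinuity point with $\lambda^c>0$ has \emph{every} $\mathbb P(F)$-invariant measure over $\mu$ an $su$-state, and does not derive a contradiction from this alone.

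If your argument were correct it would show that \emph{every} non-uniformly hyperbolic $f\in\mathcal B^r_\omega(M)$ is a continuity point, which together with the easy case on $\operatorname{int}(\mathcal Z)$ would essentially settle Conjecture~1 of the paper. That conjecture is explicitly left open, which is a strong signal that the ``$su$-state $\Rightarrow\lambda^c=0$'' step cannot be had for free. What the paper actually does to prove Theorem~B is quite different: starting from a hypothetical discontinuity point, it uses the pinching hyperbolic periodic point (supplied by Katok's theorem) and the $su$-loop of Proposition~\ref{pivot} to construct a sequence of $C^r$-small perturbations $f_k$ localized near a pivot node, each introducing a controlled rotation of the center cocycle there (Lemma~\ref{pert}). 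The key quantitative input is that the holonomies move by an exponentially small amount in $k$ while the rotation angle $\beta_k$ decays only polynomially (Propositions~\ref{nodes} and \ref{pert-holonomy}), so for $k$ large the putative continuous holonomy-invariant section that a discontinuity point would force (via Proposition~\ref{discont} and Theorem~\ref{invariant}) cannot close up around the loop. This breaks the rigidity and yields the open dense set of continuity points. Your $u$-state semicontinuity framework is not wrong as a language, but it is missing the perturbation mechanism that is the actual content of the proof.
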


Moreover, we are able to extend Theorem A and Theorem B for partially hyperbolic volume-preserving systems.

\section{Preliminaries and Statements}

A diffeomorphism $f\colon M\to M$ of a compact manifold $M$ is \textit{partially hyperbolic} if there exist a nontrivial splitting of the tangent bundle $$TM=E^{s}\oplus E^{c}\oplus E^{u}$$ invariant under the derivative map $Df$, a Riemannian metric $\left\| \cdot \right\|$ on $M$, and positive continuous functions $\chi$, $\widehat{\chi}$, $\nu$, $\widehat{\nu}$, $\gamma$, $\widehat{\gamma}$ with 
$$\chi< \nu < 1 < \widehat{\nu}^{-1} < \widehat{\chi}^{-1} \quad  \text{and} \quad \nu< \gamma < \widehat{\gamma}^{-1}< \widehat{\nu}^{-1},$$ such that for any unit vector $v\in T_{p}M$, 
\begin{equation}\label{ph}
\begin{aligned}
\chi(p)< &\left\| Df_{p}(v) \right\|< \nu(p) \quad \quad \text{if} \; v\in E^{s}(p), \\
\gamma(p) < &\left\| Df_{p}(v) \right\|< \widehat{\gamma}(p)^{-1} \quad \text{if} \; v\in E^{c}(p),\\
\widehat{\nu}(p)^{-1}< &\left\| Df_{p}(v) \right\| < \widehat{\chi}(p)^{-1} \quad \text{if} \; v\in E^{u}(p).
\end{aligned}
\end{equation}

Partial hyperbolicity is a $C^1$ open condition, that is, any diffeomorphism sufficiently $C^1$-close to a partially hyperbolic diffeomorphism is itself partially hyperbolic. Moreover, if $f\colon M\to M$ is partially hyperbolic, then the stable and unstable bundles $E^{s}$ and $E^{u}$ are uniquely integrable and their integral manifolds form two transverse (continuous) foliations $W^{s}$ and $W^{u}$, whose leaves are immersed submanifolds of the same class of differentiability as $f$. These foliations are called the \textit{strong-stable} and \textit{strong-unstable} foliations. They are invariant under $f$, in the sense that
$$f(W^{s}(x))=W^{s}(f(x)) \qquad \text{and}\qquad f(W^{u}(x))=W^{u}(f(x)),$$ where $W^{s}(x)$ and $W^{u}(x)$ denote the leaves of $W^{s}$ and $W^{u}$, respectively, passing through any $x\in M$. 

For more information about partially hyperbolic diffeomorphisms we refer the reader to \cite{BDV,HPS,Sh}.

Given two points $x,y\in M$, $x$ is \textit{accessible} from $y$ if there exists a path that connects $x$ to $y$, which is a concatenation of finitely many subpaths, each of which lies entirely in a single leaf of $W^u$ or a single leaf of $W^s$. We call this type of path, an \textit{su-path}. This defines an equivalence relation and we say that $f$ is \textit{accessible} if $M$ is the unique accessibility class. By the results in \cite{AV2}, accessibility is a $C^1$ open condition among partially hyperbolic diffeomorphisms with 2-dimensional center bundle. We refer the reader to Section 5 of \cite{M} for a detailed outline of the proof. See also Proposition \ref{uniform}. 

\begin{defn}[$\alpha$-pinched]\label{holder} Let $f$ be a partially hyperbolic diffeomorphism and $\alpha>0$. We say that $f$ is $\alpha$-pinched if the functions in Equation (\ref{ph}) satisfy,  
\begin{equation*}
\begin{aligned}
\nu &< \gamma\, \chi^{\alpha} \quad \text{and} \quad \; \widehat{\nu} < \widehat{\gamma}\, \widehat{\chi}^{\alpha}, \\
\nu &< \gamma\, \widehat{\chi}^{\alpha} \quad \text{and} \quad \; \widehat{\nu} < \widehat{\gamma}\, \chi^{\alpha}. 
\end{aligned}
\end{equation*} 
\end{defn}

\begin{defn}[$\alpha$-bunched]\label{bunched} Let $f$ be a partially hyperbolic diffeomorphism and $\alpha>0$. We say that $f$ is $\alpha$-bunched if the functions in Equation (\ref{ph}) satisfy, 
$$\nu^{\alpha} < \gamma \widehat{\gamma} \qquad \text{and} \qquad \widehat{\nu}^{\alpha}< \gamma \widehat{\gamma}.$$ 
\end{defn}

Notice that both conditions, $\alpha$-pinched and $\alpha$-bunched, are $C^1$-open. Moreover, if $f$ is a $C^2$ $\alpha$-pinched diffeomorphism, then $E^c$ is $\alpha$-H\"older. See Section 4 of \cite{PSW2}.

Let $M$ be a symplectic manifold and $\omega$ denote the symplectic form. Then, $\mathit{Diff}^r_{\omega}(M)$ denotes the set of $C^r$ diffeomorphisms preserving $\omega$ and $\mathit{PH}^r_{\omega}(M)$ the subset of $\mathit{Diff}^r_{\omega}(M)$ formed by the partially hyperbolic diffeomorphisms. 

\begin{defn}\label{symple} If $r\geq 2$, then $\mathcal{B}^r_{\omega}(M)$ denotes the subset of $\mathit{PH}^r_{\omega}(M)$ where $f\in \mathcal{B}^r_{\omega}(M)$ if $f$ is accessible, $\alpha$-pinched and $\alpha$-bunched for some $\alpha>0$ and its center bundle is 2-dimensional.
\end{defn}

\begin{obs}\label{ergod} Observe that $\mathcal{B}^r_{\omega}(M)$ is a $C^1$ open set. Moreover, the notion of $\alpha$-bunched defined above implies that the diffeomorphism is center bunched in the sense of Theorem 0.1 of \cite{BW}. Therefore, every diffeomorphism in $\mathcal{B}^r_{\omega}(M)$ is ergodic. 
\end{obs}

If $f$ is a volume-preserving $C^1$ diffeomorphism and $\mu$ denotes the volume induced by a Riemannian metric, then by the Theorem of Oseledets for $\mu$-almost every point $x\in M$, there exist $k(x)\in \mathbb{N}$, real numbers $\widehat{\lambda}_1(f,x)> \cdots > \widehat{\lambda}_{k(x)}(f,x)$ and a splitting $T_{x}M=E^{1}_x\oplus \cdots \oplus E^{k(x)}_x$ of the tangent bundle at $x$, all depending measurably on the point, such that
$$\lim\limits_{n\rightarrow \pm \infty} \frac{1}{n} \text{log} \left\|Df^{n}_x(v)\right\|= \widehat{\lambda}_j(f,x) \quad \text{for all} \; v\in E^j_x \setminus \{0\}. $$
The real numbers $\widehat{\lambda}_j(f,x)$ are the \emph{Lyapunov exponents} of $f$ in the point $x$. 

We say that $f$ is \textit{non-uniformly hyperbolic} if the set of points with non-zero Lyapunov exponents has full measure. 

Let $\lambda_1(f,x)\geq \lambda_2(f,x)\geq \cdots \geq \lambda_d(f,x)$ be the numbers $\widehat{\lambda}_j(f,x)$, each repeated with multiplicity $\dim\, E^j_x$ and written in non-increasing order. If $f$ is ergodic, then the functions $k(x)$ and $\lambda_j(f,x)$ are constants almost everywhere. 

For a partially hyperbolic diffeomorphism $f$, the Lyapunov exponents of $Df\vert E^c$ are called the \emph{center Lyapunov exponents} of $f$. If $\dim E^c=2$, we are going to denote them by $\lambda^c_1(f,x)$ and $\lambda^c_2(f,x)$. Moreover, if $f$ is ergodic and $$\int \log \left|\det(Df_x\vert E^c(x))\right|d\mu=0,\quad \text{then} \quad \lambda^c_1(f)+\lambda^c_2(f)=0.$$ This is always the case for partially hyperbolic symplectic diffeomorphisms, see Lemma 2.5 of \cite{XZ}.

In the following, we give the precise statement of Theorem A and Theorem B.

\begin{teoA} For every $r\geq 2$, the subset of non-uniformly hyperbolic diffeomorphisms in $\mathcal{B}^r_{\omega}(M)$ is $C^r$ open.
\end{teoA}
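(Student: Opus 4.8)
The plan is to reduce Theorem~A to the assertion that $\mathcal Z:=\{f\in\mathcal B^r_\omega(M):\lambda^c_1(f)=0\}$ is closed in $\mathcal B^r_\omega(M)$ for the $C^r$ topology, and then to prove that closedness by a compactness argument based on the invariance principle for the center cocycle. For the reduction: let $f\in\mathcal B^r_\omega(M)$; since $f$ is symplectic, $E^c$ is a symplectic subbundle and $Df|E^c$ is an area-preserving cocycle on a $2$-dimensional bundle, so $\lambda^c_1(f,x)+\lambda^c_2(f,x)=0$ for a.e. $x$, and because $f$ is ergodic these are constants $\lambda^c_1(f)\ge 0\ge -\lambda^c_1(f)=\lambda^c_2(f)$; along $E^s$ and $E^u$ partial hyperbolicity gives Lyapunov exponents uniformly bounded away from $0$. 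Hence $f$ is non-uniformly hyperbolic if and only if $\lambda^c_1(f)>0$, and Theorem~A is equivalent to the closedness of $\mathcal Z$. Since $\lambda^c_1$ is only upper semicontinuous (an infimum of continuous functionals of $f$), this closedness is genuinely a theorem: here accessibility (through ergodicity) and the other $C^1$-open conditions defining $\mathcal B^r_\omega(M)$ are essential.

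\emph{Detecting $\mathcal Z$.} For $f\in\mathcal B^r_\omega(M)$ the $\alpha$-pinching makes $E^c$ be $\alpha$-H\"older and the $\alpha$-bunching makes $Df|E^c$ fiber bunched, so it carries H\"older stable and unstable holonomies on the projective bundle $\mathbb P E^c$. By the invariance principle (Ledrappier; Avila--Viana) together with ergodicity, $f\in\mathcal Z$ if and only if there is an $f$-invariant probability measure $m$ on $\mathbb P E^c$ projecting to the volume $\mu$ which is simultaneously a $u$-state and an $s$-state (invariant under unstable and under stable holonomies): when $\lambda^c_1(f)>0$ every $u$-state is concentrated on one of the two measurable Oseledets line fields and every $s$-state on the other, and these differ, so no measure can be both; conversely, when $\lambda^c_1(f)=0$ a $u$-state — which always exists — is automatically an $s$-state. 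Thus $\mathcal Z$ is exactly the set of $f\in\mathcal B^r_\omega(M)$ admitting such an $su$-state.

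\emph{Closedness.} Let $g_n\to f$ in $C^r$ with $g_n\in\mathcal Z$; since $\mathcal B^r_\omega(M)$ is $C^1$-open we may assume $g_n\in\mathcal B^r_\omega(M)$, and since the pinching and bunching exponents can be chosen uniformly on a $C^1$-neighborhood of $f$, the H\"older holonomies of $Dg_n|E^c_{g_n}$ obey uniform estimates and, because $g_n\to f$ in $C^1$ and $E^c_{g_n}\to E^c_f$ uniformly, converge uniformly on compact sets to the holonomies of $Df|E^c_f$. For each $n$ choose $m^{(n)}$ as above for $g_n$; regarding all $m^{(n)}$ as probability measures on the compact space $\mathbb P(TM)$ — they are supported on $\mathbb P E^c_{g_n}$, which converges to $\mathbb P E^c_f$ — pass to a weak-$*$ convergent subsequence $m^{(n)}\to m$. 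Then $m$ is $Df$-invariant, projects to $\mu$, and is supported on $\mathbb P E^c_f$; and since the $u$-state and $s$-state conditions are closed under weak-$*$ convergence of the measures combined with uniform convergence of the holonomies, $m$ is both a $u$-state and an $s$-state for $Df|E^c_f$. By the previous paragraph $f\in\mathcal Z$, so $\mathcal Z$ is closed and Theorem~A follows.

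\emph{Main obstacle.} The compactness itself is soft; the real content is (i) the \emph{uniformity} of the fiber-bunching and holonomy estimates on a $C^1$-neighborhood of $f$, without which the holonomies — hence the $u$/$s$-state conditions — would not pass to the limit, together with the care needed to verify that ``being an $su$-state'' is indeed a closed condition when both the measure and the holonomies vary; and (ii) the rigidity behind the detection step, namely that a measure which is at once a $u$-state and an $s$-state forces $\lambda^c_1(f)=\lambda^c_2(f)$, which is the heart of the invariance principle and is where the H\"older regularity of $E^c$ and the bunching are genuinely used. Extending Theorem~A to the volume-preserving (non-symplectic) setting further requires replacing the identity $\lambda^c_1(f)+\lambda^c_2(f)=0$ by a control of the center Jacobian and running the same mechanism in the appropriate Grassmannian bundle, but the architecture of the argument is unchanged.
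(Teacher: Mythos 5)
Your reduction of Theorem~A to the closedness of $\mathcal Z=\{f:\lambda^c_1(f)=0\}$ is correct, and your weak-$*$ compactness step is a sound idea (and is in fact also used in the paper, in the proof of Proposition~\ref{discont}). But the ``detection'' step contains a genuine error that breaks the argument. You assert that when $\lambda^c_1(f)>0$ no $\mathbb P(F)$-invariant probability measure projecting to $\mu$ can be simultaneously a $u$-state and an $s$-state, so that $\mathcal Z$ is exactly the set of $f$ admitting an $su$-state. This is false in the present setting. The paper's Proposition~\ref{discont} proves the exact opposite: if $f\in\mathcal B^r_*(M)$ is a $C^r$ discontinuity point for the center exponents --- which, by upper semicontinuity of $\lambda^c_1$, is exactly what any $C^r$-limit of maps with zero center exponents and $\lambda^c_1(f)>0$ would be --- then \emph{every} $\mathbb P(F)$-invariant probability projecting to $\mu$ is an $su$-state, even though $\lambda^c_1(f)>0$. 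Concretely, $m^+$ is always a $u$-state and $m^-$ always an $s$-state, but at discontinuity points the limit of $u$-states $m_{k_j}$ is a $u$-state $m\ne m^+$, hence $m=t\,m^++(1-t)\,m^-$ with $t\ne1$, and linearity of disintegrations forces $m^-$ (and then $m^+$) to be $u$-invariant too. So the $u$-state is not unique and your ``no $su$-state'' dichotomy fails precisely at the maps you need to exclude: producing an $su$-state for the limit $f$ gives no contradiction with $\lambda^c_1(f)>0$.

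The paper circumvents this by extracting extra rigidity that your argument does not exploit. Since $f$ is symplectic and non-uniformly hyperbolic, Katok's theorem gives a pinching hyperbolic periodic point $p$; the conditional measures $m_{k,p(f_k)}$ of the limiting ergodic $su$-states for $f_k$ (which exist by the Invariance Principle since $\lambda^c_1(f_k)=0$) are supported on the eigenline pair $\{a(f_k),b(f_k)\}$; and ergodicity of $m_k$ together with the continuity of the bi-invariant disintegrations force $m_{k,p(f_k)}$ to be a Dirac mass at one eigenline, not a nontrivial convex combination. That pins the limit down to $m^+$ or $m^-$ exactly, giving $\lambda_+(F_{k_j})\to\lambda_+(F)\ne0$ and the desired contradiction with $\lambda_+(F_{k_j})=0$. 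Without the periodic-point localization (and the Corollary~\ref{hol a cont}/\ref{bound-sequence} control of holonomies under perturbation of the base diffeomorphism, needed to make ``close to $M^+$ or $M^-$'' a global statement), the mere existence of a limiting $su$-state is not enough to close the argument.
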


This result together with Theorem A in \cite{M} implies the following: 

\begin{corol} Let $r\geq 2$ and $f\in \mathcal{B}^{r}_{\omega}(M)$. If the set of periodic points of $f$ is non-empty, then $f$ can be $C^r$-approximated by $C^r$ open subsets of non-uniformly hyperbolic symplectic diffeomorphisms.
\end{corol}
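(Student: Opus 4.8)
The plan is to obtain this statement as a formal combination of Theorem A above with Theorem A of \cite{M}. Recall that Theorem A of \cite{M} asserts that whenever a diffeomorphism in $\mathcal{B}^r_\omega(M)$ has a periodic point, it can be $C^r$-approximated by diffeomorphisms that still lie in $\mathcal{B}^r_\omega(M)$ and whose center Lyapunov exponents are non-zero; since for a symplectic map one has $\lambda^c_1(g)+\lambda^c_2(g)=0$, non-vanishing of the center exponents means $\lambda^c_1(g)>0>\lambda^c_2(g)$, and together with the uniform hyperbolicity along $E^s\oplus E^u$ this says exactly that $g$ is non-uniformly hyperbolic.

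First I would fix $f\in\mathcal{B}^r_\omega(M)$ with a periodic point and an arbitrary $\varepsilon>0$. Applying Theorem A of \cite{M} produces $g\in\mathcal{B}^r_\omega(M)$ with $d_{C^r}(f,g)<\varepsilon$ and with non-zero center Lyapunov exponents, hence $g$ is a non-uniformly hyperbolic symplectic diffeomorphism lying in $\mathcal{B}^r_\omega(M)$. Next I would invoke Theorem A of the present paper: the set of non-uniformly hyperbolic diffeomorphisms in $\mathcal{B}^r_\omega(M)$ is $C^r$ open, so there is a $C^r$-neighborhood $\mathcal{V}$ of $g$ inside $\mathcal{B}^r_\omega(M)$ (using also that $\mathcal{B}^r_\omega(M)$ is itself $C^1$ open, Remark \ref{ergod}) consisting entirely of non-uniformly hyperbolic symplectic diffeomorphisms. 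Since $\varepsilon$ was arbitrary, $f$ is $C^r$-approximated by such open sets $\mathcal{V}$, which is the assertion.

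The only points that require care, rather than genuine difficulty, are bookkeeping ones: one must make sure that the perturbations supplied by \cite{M} are symplectic and remain accessible, $\alpha$-pinched and $\alpha$-bunched — this is already part of the statement of Theorem A of \cite{M}, so that the output $g$ truly sits in $\mathcal{B}^r_\omega(M)$ — and one must make sure that the open neighborhood $\mathcal{V}$ from Theorem A can be taken inside $\mathcal{B}^r_\omega(M)$, which follows from the $C^1$-openness of $\mathcal{B}^r_\omega(M)$. Thus the substantive content is entirely in the two quoted theorems, and the corollary is their immediate consequence; I expect no essential obstacle beyond correctly citing the hypotheses under which \cite{M} perturbs a periodic orbit.
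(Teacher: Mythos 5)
Your proposal is correct and is exactly the paper's intended argument: the paper states the corollary as an immediate consequence of Theorem A here together with Theorem A of \cite{M}, and you combine them in precisely that way (perturb to a non-uniformly hyperbolic $g\in\mathcal{B}^r_\omega(M)$ via \cite{M}, then invoke the $C^r$ openness of non-uniform hyperbolicity in $\mathcal{B}^r_\omega(M)$ and the $C^1$ openness of $\mathcal{B}^r_\omega(M)$ to get the open set). No gap.
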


Before enunciating Theorem B, we need to give the definition of continuity points of the center Lyapunov exponents in $\mathcal{B}^r_{\omega}(M)$. 

\begin{defn}\label{contin} We say that a diffeomorphism $f\in \mathcal{B}^r_{\omega}(M)$ is a $C^r$ continuity point for the center Lyapunov exponents if for every $f_k\to f$ in $\mathit{Diff}^r_{\omega}(M)$, $$\lambda^c_1(f_k)\to \lambda^c_1(f).$$
\end{defn}

\begin{obs} Observe that since $f$ is a symplectic diffeomorphism, $\lambda^c_1(f_k)\to \lambda^c_1(f)$ if and only if $\lambda^c_2(f_k)\to \lambda^c_2(f).$
\end{obs}

\begin{teoB} Let $r\geq 2$. There exists a $C^r$ open and dense subset $\mathcal{U}\subset \mathcal{B}^r_{\omega}(M)$ such that every $g\in \mathcal{U}$ is a $C^r$ continuity point for the center Lyapunov exponents. 
\end{teoB}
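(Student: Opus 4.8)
The plan is to combine the openness statement of Theorem A with a dichotomy for the center exponents and a density argument for the ``positive-exponent'' case. First I would split $\mathcal{B}^r_\omega(M)$ according to whether the common center exponents vanish: let $\mathcal{Z}$ be the set of $f\in\mathcal{B}^r_\omega(M)$ with $\lambda^c_1(f)=\lambda^c_2(f)=0$ and let $\mathcal{N}=\mathcal{B}^r_\omega(M)\setminus\mathcal{Z}$ be the non-uniformly hyperbolic part (recall that every $f\in\mathcal{B}^r_\omega(M)$ is ergodic by Remark \ref{ergod}, and $\lambda^c_1(f)=-\lambda^c_2(f)$ by the symplectic condition, so $f\in\mathcal{N}$ iff $\lambda^c_1(f)>0$). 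By Theorem A, $\mathcal{N}$ is $C^r$ open. On $\mathcal{N}$ one shows that $f\mapsto\lambda^c_1(f)$ is in fact continuous: this is where the pinching/bunching hypotheses and accessibility enter, because they give a continuous (in fact H\"older) invariant holonomy for the center cocycle and force the Oseledets/finest dominated splitting of $Df|E^c$ over the (unique) su-accessibility class to vary continuously; combined with the invariance principle of Avila--Viana type (as used in \cite{M}), a would-be drop of the exponent along $f_k\to f$ in $\mathcal{N}$ would produce an $f_k$-invariant measure on the projectivized center bundle that is $su$-invariant, hence by accessibility projects to a measurable conformal structure, contradicting $\lambda^c_1(f_k)>0$ in the limit. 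Thus every $f\in\mathcal{N}$ is already a $C^r$ continuity point, and $\mathcal{N}$ is open, so it contributes an open set of continuity points.

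Next I would treat the interior of $\mathcal{Z}$. If $f$ lies in the $C^r$ interior of $\mathcal{Z}$, then $\lambda^c_1\equiv 0$ on a whole $C^r$ neighborhood of $f$, so in particular $\lambda^c_1$ is (locally) constant, hence trivially continuous at $f$. Therefore $\operatorname{int}(\mathcal{Z})$ also consists of continuity points. Setting $\mathcal{U}=\mathcal{N}\cup\operatorname{int}(\mathcal{Z})$ gives a $C^r$ open set of continuity points, and it remains to prove that $\mathcal{U}$ is $C^r$ dense in $\mathcal{B}^r_\omega(M)$. The complement of $\mathcal{U}$ is contained in the topological boundary $\partial\mathcal{Z}$, so density of $\mathcal{U}$ is equivalent to: every $f\in\partial\mathcal{Z}$ can be $C^r$-approximated by elements of $\mathcal{N}$. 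This is the key perturbation step, and it is exactly the kind of statement provided by the abundance of non-uniform hyperbolicity: using that $\mathcal{B}^r_\omega(M)$ is accessible with $2$-dimensional center and satisfies the pinching/bunching conditions, one perturbs $f$ in $\mathit{Diff}^r_\omega(M)$ along an su-loop (a closed accessibility path) to create a ``twist'' of the center cocycle whose effect, by the argument of \cite{M} (Theorem A there) and \cite{BV1}-style estimates, strictly increases $\int\log\|Df|E^c\|$ on a positive-measure set and hence makes $\lambda^c_1$ positive; since $\mathcal{N}$ is open this perturbation can be taken arbitrarily small and still land in $\mathcal{N}$.

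Assembling the pieces: $\mathcal{U}$ is open by Theorem A (for the $\mathcal{N}$ part) and by definition (for the $\operatorname{int}\mathcal{Z}$ part); every point of $\mathcal{U}$ is a continuity point by the continuity statement on $\mathcal{N}$ and by local constancy on $\operatorname{int}\mathcal{Z}$; and $\mathcal{U}$ is dense because points of $\partial\mathcal{Z}$ are approximable by $\mathcal{N}\subset\mathcal{U}$ via the su-loop perturbation, while points of $\mathcal{Z}^\circ\cup\mathcal{N}$ are already in $\mathcal{U}$. The volume-preserving version is obtained by the same scheme, replacing the symplectic relation $\lambda^c_1=-\lambda^c_2$ by the hypothesis $\int\log|\det(Df|E^c)|\,d\mu=0$ whenever it holds and otherwise tracking the two center exponents separately; the dichotomy and the holonomy/invariance-principle arguments are insensitive to that change. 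The main obstacle is the perturbation step of the previous paragraph: one must realize an arbitrarily $C^r$-small, volume- (or symplectic-) preserving perturbation supported near an su-loop whose net effect on the center derivative cocycle is a genuine hyperbolic rotation-type twist, and then quote the invariance-principle dichotomy to conclude that the perturbed exponent is nonzero; controlling the size of this perturbation in the $C^r$ topology (rather than merely $C^1$, where Bochi's opposite phenomenon lives) is precisely what uses the $2$-dimensionality of $E^c$, the $\alpha$-pinching (so $E^c$ is $\alpha$-H\"older and the center cocycle is fiber-bunched) and accessibility, and is the technical heart inherited from \cite{M}.
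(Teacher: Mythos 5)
Your decomposition $\mathcal{B}^r_\omega(M)=\mathcal{Z}\cup\mathcal{N}$ looks promising at first glance, but on inspection the burden of the argument lands entirely on the claim that $\lambda^c_1$ is continuous on $\mathcal{N}$, and that claim is essentially Conjecture~1 of the paper, which the authors explicitly leave open. Indeed, by the upper/lower semicontinuity of $\lambda^c_1$, $\lambda^c_2$ (used in the proof of Proposition~\ref{discont}), every $f$ with $\lambda^c_1(f)=\lambda^c_2(f)$ is automatically a continuity point; in the symplectic case this is exactly $\mathcal{Z}$. So every discontinuity point already lies in $\mathcal{N}$, and if in addition every $f\in\mathcal{N}$ were a continuity point, there would be no discontinuity points at all. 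Your sketch does not actually justify this: the Invariance Principle applied to an $su$-state does \emph{not} contradict nonzero center exponents. On the contrary, when $\lambda_+(F)>0>\lambda_-(F)$ one always has the two invariant sections coming from $m^{\pm}$ of Proposition~\ref{nozero}; an $su$-state (and even a continuous, holonomy-invariant disintegration) is perfectly compatible with non-uniform hyperbolicity. The Oseledets splitting is only measurable in general, and the bunching/pinching/accessibility hypotheses give you holonomies and, \emph{when a discontinuity occurs}, continuous invariant sections of $\mathbb{P}(E^c)$ -- they do not by themselves force continuity of the exponents on $\mathcal{N}$.

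Your density step is also not doing any work: once you set $\mathcal{U}=\mathcal{N}\cup\operatorname{int}(\mathcal{Z})$, the complement is $\mathcal{Z}\setminus\operatorname{int}(\mathcal{Z})$, and any such $f$ is by definition approximated by points of $\mathcal{N}$; no perturbation along an $su$-loop is needed for that. The real content -- and the paper's actual route -- is a quantitative perturbation statement (Theorem~\ref{interior}): if $f$ is a discontinuity point with a pinching hyperbolic periodic point (which every symplectic discontinuity point has, via Katok), then $f$ can be $C^r$-approximated by \emph{open sets} of continuity points. That is proved by choosing an $su$-loop at $p$ with a ``pivot'' node $z_l$ that returns to itself slowly (Proposition~\ref{pivot}), perturbing by a small rotation supported on a ball $B_{\delta_k}(z_l)$ (Lemma~\ref{pert}), and comparing the exponentially small drift of the perturbed holonomies with the only polynomially small rotation angle $\beta_k$ (the decisive estimate is $\lambda^k/\sin^2\beta_k\to0$). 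This breaks the rigidity of any would-be bi-invariant continuous section for $f_k$ and, robustly, for a whole $C^r$ neighborhood of $f_k$. The open dense set $\mathcal{U}$ is then the interior of the set of continuity points, dense precisely because Theorem~\ref{interior} places such open sets next to every discontinuity point. Your last paragraph gestures at this mechanism, but it is attached to a density claim that is vacuous in your setup, while the step that actually needs it -- eliminating discontinuity points near a given one -- is replaced by the unjustified continuity-on-$\mathcal{N}$ claim.
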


Theorem \ref{invariant} and Proposition \ref{discont} in Section 4 will be used to prove Theorem A and Theorem B but they also imply the following corollary. 

\begin{defn} We say that a periodic point $p$ with $n_p=per(p)$ is a \emph{quasi-elliptic} periodic point if there exists $1\leq l \leq \dim M/2$ such that $Df^{n_p}_p$ has 2l non-real eigenvalues of norm one and its remaining eigenvalues have norm different from one.
\end{defn} 

\begin{corol} Let $r\geq 2$. Every $f\in \mathcal{B}^r_{\omega}(M)$ having a quasi-elliptic periodic point is a $C^r$ continuity point for the center Lyapunov exponents. 
\end{corol}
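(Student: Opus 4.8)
The plan is to prove that such an $f$ cannot be a point of discontinuity for the center exponents, by confronting the dichotomy of Theorem~\ref{invariant} with the rigidity that a quasi-elliptic orbit imposes on invariant sub-bundles of $E^c$.

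First I would unwind the hypothesis. Since $\dim E^c=2$ and every eigenvalue of $Df^{n_p}_p$ along $E^s$ (resp.\ $E^u$) has modulus $<1$ (resp.\ $>1$), the $2l$ eigenvalues of modulus one all come from the restriction $Df^{n_p}_p|_{E^c(p)}$; as $\dim E^c(p)=2$ this forces $l=1$, and the two center eigenvalues at $p$ form a non-real conjugate pair $e^{\pm i\theta}$ with $\theta\notin\pi\mathbb{Z}$. Thus $Df^{n_p}_p|_{E^c(p)}$ is conjugate to a nontrivial rotation and admits \emph{no} invariant line. The key consequence I would extract is: $f$ admits no continuous $Df$-invariant sub-bundle $L\subset E^c$ with $\dim L=1$, because such an $L$ would restrict at the periodic point to a $Df^{n_p}_p$-invariant line inside $E^c(p)$, which we have just excluded.

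Next I would feed this into Section~4. Symplecticity makes $E^c$ a symplectic sub-bundle, so $Df|E^c$ is an $SL(2,\mathbb{R})$-cocycle, and the $\alpha$-pinching and $\alpha$-bunching conditions of Definitions~\ref{holder} and \ref{bunched} make it fiber-bunched over the accessible base $(f,\mu)$. Theorem~\ref{invariant} then offers the alternative: either $f$ carries a continuous $Df$-invariant line sub-bundle of $E^c$, or the center exponents coincide, $\lambda^c_1(f)=\lambda^c_2(f)$. The previous paragraph excludes the first, so $\lambda^c_1(f)=\lambda^c_2(f)$, and since $\lambda^c_1(f)+\lambda^c_2(f)=0$ for symplectic maps, $\lambda^c_1(f)=0$. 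Finally I would observe that a diffeomorphism in $\mathcal{B}^r_\omega(M)$ with vanishing center exponents is automatically a continuity point: this is recorded in Proposition~\ref{discont}, which identifies the discontinuity points as those with strictly positive top center exponent; and in any case it is elementary, since $\lambda^c_1\ge 0$ on $\mathcal{B}^r_\omega(M)$ while $\lambda^c_1(g)=\inf_n\frac1n\int_M\log\|Dg^n_x|_{E^c_g(x)}\|\,d\mu(x)$ is an infimum of functions depending continuously on $g$ in the $C^1$ topology (the center bundle varies continuously with $g$), hence is upper semicontinuous; together with $\lambda^c_1\ge 0$ this forces $\lambda^c_1(f_k)\to 0=\lambda^c_1(f)$ for every $f_k\to f$ in $\mathit{Diff}^r_\omega(M)$.

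The hard part is essentially all of Theorem~\ref{invariant}, i.e.\ showing that in this pinched, bunched and accessible regime the absence of a continuous invariant center line field genuinely forces the two center exponents to coincide. This is the invariance-principle content: one must promote an a.e.\ Oseledets line field to a continuous (indeed H\"older, thanks to pinching) $Df$-invariant sub-bundle by playing the stable and unstable holonomies of the fiber-bunched cocycle against accessibility, in the spirit of Avila--Viana. Granted that, the quasi-elliptic hypothesis enters only through the elementary linear-algebra remark above, but it is precisely the remark that discards the rigid alternative in the dichotomy, leaving vanishing center exponents --- and hence continuity --- as the only possibility.
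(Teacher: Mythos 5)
Your linear-algebra observation about the quasi-elliptic point is correct and is indeed the pivot of the argument: with $\dim E^c=2$ and the symplectic pairing $\lambda\leftrightarrow\lambda^{-1}$, quasi-ellipticity forces the center eigenvalues at $p$ to be a non-real unit pair $e^{\pm i\theta}$, $\theta\notin\pi\mathbb{Z}$, so $\mathbb{P}(F^{n_p}_p)$ is conjugate to a nontrivial circle rotation and in particular has no fixed point in $\mathbb{P}(E^c(p))$. This is exactly alternative (a) of the Corollary the paper derives just before Corollary~2.

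However, the step that carries the weight in your argument is false as stated. You claim Theorem~\ref{invariant} ``offers the alternative: either $f$ carries a continuous $Df$-invariant line sub-bundle of $E^c$, or $\lambda^c_1(f)=\lambda^c_2(f)$.'' Theorem~\ref{invariant} (Theorem~D of \cite{ASV}) is a conditional statement: \emph{given} that a $\mathbb{P}(F)$-invariant measure $m$ projecting to $\mu$ is an $su$-state, it upgrades $m$ to a continuous bi-invariant disintegration. It asserts nothing when the $su$-state hypothesis fails, and there is no general dichotomy of the kind you invoke: for a fiber-bunched $SL(2,\mathbb{R})$-cocycle over an accessible base it is perfectly possible (indeed, expected generically) to have $\lambda^c_1>\lambda^c_2$ while the Oseledets line fields $E^\pm$ are only measurable, not continuous. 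In that regime $m^+$ is a $u$-state and $m^-$ an $s$-state (Proposition~\ref{nozero}), but neither need be an $su$-state, so Theorem~\ref{invariant} does not apply. Your argument therefore has a genuine gap: after ruling out a continuous invariant line field you cannot conclude $\lambda^c_1(f)=\lambda^c_2(f)$, and the ensuing conclusion $\lambda^c_1(f)=0$ does not follow. In fact it is not what the Corollary asserts, and it would be a much stronger claim: since a quasi-elliptic center periodic point is $C^r$-robust when $\dim E^c=2$ and $\omega$ is preserved, your conclusion would force a nonempty $C^r$-open set of maps with vanishing center exponents, which is at odds with the whole thrust of the paper (Theorem~A and the NUH openness discussion).

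The missing ingredient that closes this gap is Proposition~\ref{discont}, which the paper explicitly names as one of the two tools (``Theorem~\ref{invariant} and Proposition~\ref{discont} \dots imply the following corollary''). The correct route is by contradiction: suppose $f$ is a $C^r$ \emph{discontinuity} point. By semicontinuity of $f\mapsto\lambda^c_1(f)$ and $f\mapsto\lambda^c_2(f)$ (as in the first paragraph of the proof of Proposition~\ref{discont}), discontinuity forces $\lambda^c_1(f)\neq\lambda^c_2(f)$, and then Proposition~\ref{discont} shows that $m^+$ (and $m^-$) is an $su$-state. Only now does Theorem~\ref{invariant} apply, producing a continuous bi-invariant disintegration $\{m^+_x\}$; since the disintegration agrees $\mu$-a.e.\ with the Dirac masses $\delta_{\mathbb{P}(E^+_x)}$, continuity forces $m^+_x$ to be a Dirac mass at every $x$, and by Remark~\ref{finv} the section $x\mapsto \operatorname{supp} m^+_x$ is $\mathbb{P}(F)$-invariant. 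Evaluating at $p$ gives a fixed point of $\mathbb{P}(F^{n_p}_p)$, contradicting quasi-ellipticity. This proves $f$ is a continuity point with no claim whatsoever on the value of $\lambda^c_1(f)$.
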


Theorem A, Theorem B and Corollary 2 give good evidence about the validity of the following conjecture due to Marcelo Viana. 

\begin{conj} If $r\geq 2$ and $f\in \mathcal{B}^r_{\omega}(M)$, then $f$ is a $C^r$ continuity point for the center Lyapunov exponents.
\end{conj}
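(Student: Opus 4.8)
The plan is to argue by contradiction, combining the upper semicontinuity of $\lambda^c_1$ with the elementary fact that an elliptic element of $GL(2,\mathbb R)$ fixes no line. First I record a semicontinuity reduction: since $\lambda^c_1(f)=\inf_{n\ge 1}\frac1n\int\log\|Df^n_x|_{E^c(x)}\|\,d\mu$ is an infimum of functions that depend continuously on $f$ in the $C^1$ topology (the center bundle of a partially hyperbolic map varies continuously with the map), the function $\lambda^c_1$ is upper semicontinuous on $\mathcal B^r_\omega(M)$; and for a symplectic $f$ one has $\lambda^c_1(f)\ge 0\ge\lambda^c_2(f)=-\lambda^c_1(f)$ by Lemma 2.5 of \cite{XZ}. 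Hence if $\lambda^c_1(f)=0$ the map is automatically a $C^r$ continuity point. So I may assume $\lambda^c_1(f)>0$ and, aiming for a contradiction, that $f$ is \emph{not} a continuity point.

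The heart of the matter is to extract from this an $f$-invariant structure on $E^c$, and this is exactly where Proposition \ref{discont} and the invariance principle of Theorem \ref{invariant} are used. A discontinuity point is, by Proposition \ref{discont}, approximated inside $\mathcal B^r_\omega(M)$ by diffeomorphisms whose center Lyapunov exponents are strictly below $\lambda^c_1(f)>0$; applying the invariance principle to those diffeomorphisms and passing to a limit --- which is legitimate because $\mathcal B^r_\omega(M)$ is $C^1$-open, so the $\alpha$-pinching and $\alpha$-bunching bounds, hence the regularity of the invariant holonomies and of the objects they carry, are uniform on a neighbourhood of $f$ --- one produces a $Df$-invariant, $su$-invariant \emph{continuous} object on $E^c$. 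By the classification behind the invariance principle this object is either (a) a conformal structure on $E^c$, or (b) an invariant line field in $E^c$ (possibly, after replacing $f$ by a power, an invariant splitting into two line fields); note that in the limit a sequence of invariant conformal structures may well degenerate to an invariant line field, so (b) cannot be discarded a priori.

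Now the quasi-elliptic periodic point $p$ enters and removes alternative (b). Set $A:=Df^{n_p}_p|_{E^c(p)}\in GL(2,\mathbb R)$; since $\dim E^c=2$ and partial hyperbolicity forces the only modulus-one eigenvalues of $Df^{n_p}_p$ to be the two associated with $E^c(p)$, the definition of quasi-elliptic means precisely that $A$ has a pair of non-real eigenvalues of modulus one, hence no real eigenvector and no $A$-invariant line. Therefore $f$ carries no continuous $Df$-invariant line field in $E^c$ --- evaluating such a field at $p$ would yield an $A$-invariant line --- and the same obstruction applies to an invariant splitting preserved by $A$ line-by-line; if instead $A$ interchanges the two line fields of such a splitting, then by symmetry $\lambda^c_1(f)=\lambda^c_2(f)=\frac12\int\log|\det(Df_x|_{E^c(x)})|\,d\mu=0$, already contradicting $\lambda^c_1(f)>0$. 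So (b) is impossible, the invariant object must be a conformal structure, and a continuous $Df$-invariant conformal structure on $E^c$ forces $\lambda^c_1(f)=\lambda^c_2(f)=0$, the final contradiction. Hence $f$ is a $C^r$ continuity point for the center Lyapunov exponents.

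The step I expect to be the main obstacle is the middle one: producing a genuinely $f$-invariant \emph{continuous} structure on $E^c$ out of the mere hypothesis that $f$ is a discontinuity point, as opposed to invariant structures only for the approximating maps. All the analytic weight --- the invariance principle for bunched cocycles together with accessibility, the passage to the limit, and the measurable-to-continuous regularity upgrade --- is carried by Theorem \ref{invariant} and Proposition \ref{discont}; granting those, the contribution of the quasi-elliptic hypothesis reduces to the one-line linear-algebra fact used above, which is why the statement is only a corollary.
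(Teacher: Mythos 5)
The statement you set out to prove is labelled as a \emph{conjecture} in the paper, attributed to Viana, and the paper explicitly does \emph{not} prove it: Theorems A, B and Corollary 2 are offered only as ``good evidence'' for it. So there is no proof in the paper to compare yours against; any complete proof here would in fact settle an open problem.

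There is a genuine gap in your argument, and it is precisely the gap that keeps the statement a conjecture. Midway through, you introduce a quasi-elliptic periodic point $p$ (``Now the quasi-elliptic periodic point $p$ enters\ldots'') and use it to rule out the invariant line-field alternative. But the hypothesis $f\in\mathcal{B}^r_\omega(M)$ gives you no periodic point at all, let alone a quasi-elliptic one; quasi-ellipticity is an extra hypothesis, and what you are really proving is (a version of) the paper's Corollary~2. Worse, in the non-trivial case you reduce to, namely $\lambda^c_1(f)>0$, the system is non-uniformly hyperbolic, and the periodic points one can produce (via Katok's closing lemma, Theorem~4.2 of \cite{K}) are \emph{hyperbolic} on $E^c$, hence pinching hyperbolic in the paper's terminology. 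Such a point \emph{does} carry two $A$-invariant lines in $E^c(p)$, so it gives you no contradiction with the existence of continuous invariant line fields; indeed the paper's own Proposition~\ref{nozero} and Proposition~\ref{discont} produce exactly such line fields $a_x,b_x$ (the supports of the disintegrations of $m^+$ and $m^-$, which are Dirac masses along the Oseledets directions) when $f$ is a discontinuity point. Your dichotomy of ``conformal structure versus line field'' is therefore not the relevant one: in the discontinuity situation with $\lambda^c_1(f)>0$ you always land in the line-field case, and the quasi-elliptic mechanism is precisely the special hypothesis the paper needs to break it. Absent that hypothesis the argument stalls, which is why the paper must resort to the perturbative construction in Section~7 to get continuity only on an open dense set (Theorem~B), rather than on all of $\mathcal{B}^r_\omega(M)$.

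A smaller remark: the reduction you make in the first paragraph — ``if $\lambda^c_1(f)=0$ the map is automatically a continuity point'' — is correct and is also used implicitly in the paper (upper semicontinuity of $\lambda^c_1$ together with lower semicontinuity of $\lambda^c_2=-\lambda^c_1$), and your description of the $su$-state / invariance-principle step matches the paper's use of Proposition~\ref{discont} and Theorem~\ref{invariant}. Those parts are sound; it is the unjustified appeal to a quasi-elliptic periodic point that makes the proposal prove a weaker statement than claimed.
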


\subsection{Volume-preserving case}

Fix $r\geq 2$. Let $\mu$ denote a probability measure in the Lebesgue class, $\mathit{Diff}^r_{\mu}(M)$ the set of volume-preserving $C^r$ diffeomorphisms and $\mathit{PH}^r_{\mu}(M)$ the subset of $\mathit{Diff}^r_{\mu}(M)$ consisting of partially hyperbolic diffeomorphisms. 

In order to generalize the results in the symplectic context to the volume-preserving setting, we need to ask for extra hypotheses in the diffeomorphisms. The key property that we use in the proof of Theorem A and B is that $$\int \log \left|\det(Df_x\vert E^c(x))\right|d\mu=0,$$ for every symplectic diffeomorphism. Therefore, we could consider the subset of $\mathit{PH}^r_{\mu}(M)$ where this condition is satisfied. However, since this set is not $C^r$ open, the results are not such relevant. 







In the following, we consider a $C^1$ open subset of $\mathit{PH}^r_{\mu}(M)$ where it is possible to extend Theorem B.

\begin{defn}\label{pinch} Let $f$ be a partially hyperbolic diffeomorphism with $\dim E^c=2$ and $p$ a periodic point of $f$ with $n_p=per(p)$. We say that $p$ is a \emph{pinching hyperbolic periodic point} if the eigenvalues of $Df^{n_p}_p|E^c(p)$ have different norms and both norms are different from one.
\end{defn} 

\begin{defn}\label{volu} Let $\mathcal{P}^r_{\mu}(M)$ denote the subset of $\mathit{PH}^r_{\mu}(M)$ where $f\in \mathcal{P}^r_{\mu}(M)$ if $f$ is accessible, $\alpha$-pinched and $\alpha$-bunched for some $\alpha>0$, has a pinching hyperbolic periodic point and its center bundle is 2-dimensional.
\end{defn}

\begin{obs} Similar to Remark \ref{ergod}, we have that the set $\mathcal{P}^r_{\mu}(M)$ is $C^1$ open and any $f\in \mathcal{P}^r_{\mu}(M)$ is ergodic. 
\end{obs}

For the set $\mathcal{P}^r_{\mu}(M)$, we can only conclude a version of Theorem A about the simplicity of the center Lyapunov exponents. 

\begin{teoC} Let $r\geq 2$. The set formed by diffeomorphisms having different center Lyapunov exponents is $C^r$ open in $\mathcal{P}^r_{\mu}(M)$.
\end{teoC}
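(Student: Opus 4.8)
The plan is to adapt the proof of Theorem A, using the pinching hyperbolic periodic point as a replacement for the symplectic identity $\lambda^c_1(f)+\lambda^c_2(f)=0$. Since $\mathcal{P}^r_{\mu}(M)$ is $C^1$ open, it suffices to show: if $f\in\mathcal{P}^r_{\mu}(M)$ satisfies $\lambda^c_1(f)>\lambda^c_2(f)$, then the same strict inequality holds for every $g$ in a suitable $C^r$ neighborhood of $f$. Arguing by contradiction, suppose there is a sequence $f_k\to f$ in $\mathit{Diff}^r_{\mu}(M)$ with $\lambda^c_1(f_k)=\lambda^c_2(f_k)$ for every $k$; by $C^1$ openness we may assume $f_k\in\mathcal{P}^r_{\mu}(M)$, and moreover that all the $f_k$ are $\alpha$-pinched and $\alpha$-bunched for one common $\alpha>0$.

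First I would follow the periodic data. The pinching hyperbolic periodic point $p$ of $f$ is in particular a hyperbolic periodic point, so it admits a continuation $p_k$ for $f_k$, with $p_k\to p$ and $Df_k^{n_p}|E^c(p_k)$ converging to $Df^{n_p}_p|E^c(p)$. Since having two center eigenvalues of distinct norms, both different from one, is an open condition --- which in dimension two forces both eigenvalues to be real --- each $p_k$ is, for $k$ large, a pinching hyperbolic periodic point of $f_k$, and the projective action of $Df_k^{n_p}$ on $\mathbb{P}E^c(p_k)$ has North--South dynamics whose two fixed points are the center eigendirections.

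Next, I would invoke the Invariance Principle of Section 4. Applying Theorem \ref{invariant} to the ergodic map $f_k$, the equality $\lambda^c_1(f_k)=\lambda^c_2(f_k)$ produces a $Df_k$-invariant probability $\mathbf{m}^k$ on the projectivized center bundle $\mathbb{P}E^c_{f_k}$, projecting to $\mu$, whose disintegration $\{\mathbf{m}^k_x\}$ is invariant under the stable and unstable holonomies of $f_k$; and since $f_k$ is $\alpha$-pinched and $\alpha$-bunched, these holonomies and $E^c_{f_k}$ are H\"older, so $x\mapsto\mathbf{m}^k_x$ has a continuous version. The crucial point is uniformity: the domination, pinching and bunching constants --- hence the H\"older exponents and constants of the holonomies and of $x\mapsto E^c_{f_k}(x)$ --- can be chosen uniformly on a fixed $C^1$ neighborhood of $f$, so that the sections $x\mapsto\mathbf{m}^k_x$ are equicontinuous. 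Trivializing $\mathbb{P}E^c_{f_k}$ over a fixed finite cover (note $E^c_{f_k}\to E^c_f$ uniformly), and combining Arzel\`a--Ascoli with weak-$*$ compactness in the fibers, I would pass to a subsequential limit $\mathbf{m}$: a \emph{continuous}, $Df$-invariant field of probabilities on $\mathbb{P}E^c_f$, projecting to $\mu$, whose continuous disintegration is invariant under the stable and unstable holonomies of $f$ (the holonomies of $f_k$ converging to those of $f$).

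Finally, I would reach a contradiction via Proposition \ref{discont}. Along the orbit of $p$, the probability $\mathbf{m}_p$ is invariant under the North--South map $\mathbb{P}\bigl(Df^{n_p}_p|E^c(p)\bigr)$, hence supported on the two center eigendirections; by accessibility together with holonomy invariance --- the holonomies being fiberwise diffeomorphisms carrying atoms to atoms of equal weight --- this propagates to a continuous $Df$-invariant, $su$-holonomy-invariant field of at most two lines in $E^c_f$. Proposition \ref{discont} excludes such a structure for a diffeomorphism of $\mathcal{P}^r_{\mu}(M)$ possessing a pinching hyperbolic periodic point, unless its center Lyapunov exponents coincide; since $\lambda^c_1(f)>\lambda^c_2(f)$, this is the desired contradiction. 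I expect the main obstacle to be the uniformity underlying the limiting argument: one must run the H\"older-holonomy and invariance-principle machinery with constants that do not degenerate along $f_k\to f$, and organize the convergence $\mathbf{m}^k\to\mathbf{m}$ although the center bundles and the holonomies move with $k$. The continuation of the periodic point, the weak-$*$ compactness, and the rigidity input of Proposition \ref{discont} should be comparatively routine once this is in place.
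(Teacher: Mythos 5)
Your overall strategy (by contradiction, invariance principle for the perturbed maps $f_k$, anchoring the structure at the pinching hyperbolic periodic point) is on the right track, but there are two genuine gaps.

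First, the claimed contradiction is misplaced. Proposition~\ref{discont} does not ``exclude'' a continuous, $Df$-invariant, $su$-holonomy-invariant family of atomic measures for $f$; quite the opposite, it (together with Theorem~\ref{invariant}) \emph{produces} exactly such structures $m^+$ and $m^-$ when $f$ is a discontinuity point with distinct center exponents. Obtaining a limit $\mathbf m$ with a continuous bi-invariant disintegration is therefore not, by itself, contradictory. The actual contradiction in the paper is quantitative: one shows that some subsequence of the measures $m_k$ converges to $m^+$ (or $m^-$), hence $\lambda_+(F_{k_j})=\int\Phi_{k_j}\,dm_{k_j}\to\int\Phi\,dm^+=\lambda_+(F)>0$, contradicting $\lambda_+(F_k)=0$. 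To get $m_{k_j}\to m^+$ (rather than some convex combination of $m^+$ and $m^-$, for which $\int\Phi$ can be $0$) one must rule out that $m_{k,p(f_k)}$ is a nontrivial mixture $t\,\delta_{a(f_k)}+(1-t)\,\delta_{b(f_k)}$; the paper does this by showing such a splitting would contradict the ergodicity of $m_k$, using the bi-invariance of its disintegration and the holonomy estimates of Section~5. This step is absent from your argument.

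Second, the passage ``the sections $x\mapsto\mathbf m^k_x$ are equicontinuous, so Arzel\`a--Ascoli applies'' is not justified. Theorem~\ref{invariant} and Corollary~\ref{zeroF} give only continuity of the disintegrations in $x$, not a modulus of continuity uniform in $k$, and the paper explicitly remarks (in the proof of Theorem~B) that it cannot establish H\"older continuity of these sections and must circumvent the issue using the hyperbolicity of $p$ and Proposition~\ref{continuity-su}. The paper's proof of Theorem~C sidesteps this compactness issue entirely: it never takes an Arzel\`a--Ascoli limit of the disintegrations, but instead compares the values $a_{k,x}=h_{\gamma_k}(a(f_k))$, $b_{k,x}=h_{\gamma_k}(b(f_k))$ against $\mathrm{supp}\,m^+_x$ and $\mathrm{supp}\,m^-_x$ along $su$-paths from $p(f_k)$, using the separation $B_\epsilon(M^+)\cap B_\epsilon(M^-)=\emptyset$ and the Lipschitz/uniform estimates on holonomies from Corollary~\ref{hol a cont}. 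You would need to replace your compactness step by this pointwise holonomy comparison, or else supply the missing equicontinuity.
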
 

Now, we state the version of Theorem B and Conjecture 1 for this setting. 

\begin{defn}\label{contin2} We say that a diffeomorphism $f\in \mathcal{P}^r_{\mu}(M)$ is a $C^r$ continuity point for the center Lyapunov exponents if for every $f_k\to f$ in $\mathit{Diff}^r_{\mu}(M)$ and $i\in \{1,2\}$, we have $$\lambda^c_i(f_k)\to \lambda^c_i(f).$$
\end{defn}

\begin{teoD} Let $r\geq 2$. There exists a $C^r$ open and dense subset $\mathcal{U}\subset \mathcal{P}^r_{\mu}(M)$ such that every $g\in \mathcal{U}$ is a $C^r$ continuity point for the center Lyapunov exponents. 
\end{teoD}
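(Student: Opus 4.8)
The plan is to carry over the proof of Theorem B, using the pinching hyperbolic periodic point wherever that argument relies on the symplectic identity $\int\log|\det(Df_x|E^c(x))|\,d\mu=0$. First I would reduce to the set where the two center exponents are distinct. Since $\lambda^c_1(f)=\inf_n\frac1n\int\log\|Df^n_x|E^c(x)\|\,d\mu$ is an infimum of quantities depending continuously on $f$, it is upper semicontinuous in $\mathit{Diff}^r_{\mu}(M)$; and since $\lambda^c_1(f)+\lambda^c_2(f)=\int\log|\det(Df_x|E^c(x))|\,d\mu$ depends continuously on $f$, the exponent $\lambda^c_2$ is lower semicontinuous. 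Consequently every $f\in\mathcal P^r_\mu(M)$ with $\lambda^c_1(f)=\lambda^c_2(f)$ is automatically a continuity point. Let $\mathcal D\subset\mathcal P^r_\mu(M)$ be the set of diffeomorphisms with $\lambda^c_1>\lambda^c_2$; by Theorem C it is $C^r$ open. Since $\operatorname{int}(\mathcal P^r_\mu(M)\setminus\mathcal D)$ consists of continuity points and $\overline{\mathcal D}\cup\operatorname{int}(\mathcal P^r_\mu(M)\setminus\mathcal D)=\mathcal P^r_\mu(M)$, it suffices to produce a $C^r$ open subset of $\mathcal D$, dense in $\mathcal D$, all of whose members are continuity points.

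Inside $\mathcal D$ I would single out two subsets. Let $\mathcal G$ be the set of $f\in\mathcal D$ for which $E^c=E^c_1\oplus E^c_2$ is a $Df$-invariant splitting into line bundles with domination between the two; this is $C^1$ open, the $E^c_i$ depend continuously on $f$, and $\lambda^c_i(f)=\int\log\|Df_x|E^c_i(x)\|\,d\mu$, so $\mathcal G$ consists of continuity points. Let $\mathcal H$ be the set of $f\in\mathcal D$ admitting a probability $\hat\mu$ on the projective bundle $\mathbb{P}E^c$ (fiber over $x$ the set of lines in $E^c(x)$) that projects to $\mu$, is $Df$-invariant, and is invariant under the stable and unstable holonomies. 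Using continuity of the holonomies under $C^1$ perturbations and weak-$*$ compactness, $\mathcal H$ is closed in $\mathcal D$, and $\mathcal G\subset\mathcal H$ because a dominated splitting is holonomy invariant, so $\delta_{E^c_1}$ is such a measure. By Proposition \ref{discont} every discontinuity point lies in $\mathcal H$, equivalently every $f\in\mathcal D\setminus\mathcal H$ is a continuity point. Hence $\mathcal U:=\operatorname{int}(\mathcal P^r_\mu(M)\setminus\mathcal D)\cup\mathcal G\cup(\mathcal D\setminus\mathcal H)$ is $C^r$ open and made of continuity points, and it remains only to show that $\mathcal H\setminus\mathcal G$, a closed subset of $\mathcal D$, has empty interior.

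To that end, fix $f\in\mathcal H\setminus\mathcal G$ and an $su$-invariant measure $\hat\mu$ on $\mathbb{P}E^c$. Since $f$ is accessible, $\alpha$-pinched and $\alpha$-bunched and $r\ge2$, Theorem \ref{invariant} applies and the disintegration $\{\hat\mu_x\}$ of $\hat\mu$ is continuous and invariant under all $su$-holonomies; as $f$ is accessible, $\hat\mu$ is then determined everywhere by $\hat\mu_p$, where $p$ is the pinching hyperbolic periodic point. The eigenvalues of $Df^{n_p}_p|E^c(p)$ have distinct norms, both different from $1$, hence are real and distinct, so the induced projective map on $\mathbb{P}E^c(p)\cong\mathbb{P}^1$ is of North--South type with the two eigendirections as its only fixed points; being invariant under it, $\hat\mu_p$ is a convex combination of the corresponding two Dirac masses. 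Transporting by holonomies and using accessibility, $\hat\mu_x$ is a convex combination of at most two Dirac masses, depending continuously on $x$; this produces — after at most passing to the orientation double cover of $E^c$, harmless on $\mathbb{T}^{2d}$ — a continuous $Df$-invariant line field $L\subset E^c$, which, since $f\notin\mathcal G$, does not sit inside a dominated splitting and whose invariance is therefore not robust. One then perturbs $f$, by a $C^r$-small volume-preserving perturbation localized near the orbit of $p$ that tilts $Df^{n_p}_p|E^c(p)$ off its eigendirections, to get $g\in\mathcal P^r_\mu(M)\cap\mathcal D$ arbitrarily $C^r$-close to $f$ with no continuous $Dg$-invariant line field in $E^c$; running the above argument for $g$ shows $g$ admits no $su$-invariant measure on $\mathbb{P}E^c$, i.e. $g\in\mathcal D\setminus\mathcal H\subset\mathcal U$. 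Thus $\mathcal H\setminus\mathcal G$ has empty interior and the proof is finished.

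The step I expect to be the main obstacle is precisely this last perturbation: destroying the merely continuous, non-dominated invariant line field $L$ by a $C^r$-small \emph{volume-preserving} perturbation that stays inside $\mathcal P^r_\mu(M)$ (the conditions defining $\mathcal P^r_\mu(M)$ are $C^1$ open, so the real content is the non-robustness of $L$). This is exactly where the pinching hyperbolic periodic point is used: it makes the invariant configuration over $p$ the pair of eigendirections of a hyperbolic $2\times 2$ matrix, which a local volume-preserving change of the derivative along the orbit can break, and this is the volume-preserving counterpart of the use of $\int\log|\det(Df|E^c)|\,d\mu=0$ in the proof of Theorem B.
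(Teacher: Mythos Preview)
Your global framework---reducing via semicontinuity to $\mathcal D$, isolating the closed set $\mathcal H$ of maps with an $su$-invariant projective measure, and showing $\mathcal H\setminus\mathcal G$ has empty interior---is a perfectly reasonable reorganization of the paper's argument, and indeed the paper proves (Theorem \ref{interior}) exactly that every discontinuity point with a pinching periodic point is $C^r$-approximated by open sets of continuity points. The difference is in how the perturbation is carried out, and this is where your proposal has a genuine gap.

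A perturbation ``localized near the orbit of $p$ that tilts $Df^{n_p}_p|E^c(p)$ off its eigendirections'' cannot do the job. For any $C^r$-small $g$, the continuation $p(g)$ is still a pinching hyperbolic periodic point, and $Dg^{n_p}_{p(g)}|E^c(p(g))$ still has two real eigendirections $a(g),b(g)$ close to $a,b$; there is nothing to ``tilt off.'' Making the center matrix elliptic would require a large perturbation (the eigenvalues have distinct moduli) and would in any case push $g$ out of $\mathcal P^r_\mu(M)$. What must be broken is not the existence of eigendirections at $p$, but the \emph{consistency of the $su$-holonomies with those eigendirections}: one needs an $su$-loop $\zeta$ based at $p$ whose holonomy $h_{\zeta}$ does not preserve $\{a,b\}$. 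The paper achieves this by finding (Proposition \ref{pivot}) a loop $\zeta=[p=z_0,\dots,z_N=p]$ with a pivot node $z_l$ that is only polynomially recurrent, and then perturbing by a rotation of $E^c$ supported in a ball of radius $\delta_k\sim k^{-2}$ around $z_l$ (Lemma \ref{pert}). The rotation changes the holonomy along one half of the loop by an angle $\beta_k$ that is polynomial in $k^{-1}$, while all collateral effects on the nodes, on the other half of the loop, and on the endpoint $z_N(f_k)$ are \emph{exponentially} small in $k$ (Propositions \ref{nodes} and \ref{pert-holonomy}, together with the hyperbolicity of $p$ to control $z_N(f_k)$ via Equation (\ref{support})). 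Since $\lambda^k/\beta_k^2\to 0$, for large $k$ the two halves send $a$ to distinct points of $\mathbb P(E^c(z_l))$, contradicting bi-invariance of the disintegration of $m^+_{f_k}$. The same estimates, with uniform constants, give an open neighborhood $\mathcal V_k(f_k)$ of continuity points. This holonomy-breaking mechanism at a slowly recurrent node, with its polynomial-versus-exponential competition, is the missing idea in your sketch; a local perturbation at $p$ does not produce it. (Two smaller points: separating two atoms of $\hat\mu_x$ into continuous line fields is a monodromy issue for a $2$-point cover of $M$, not an orientation issue for $E^c$; and the role of the symplectic identity $\int\log|\det Df|E^c|\,d\mu=0$ in Theorem B is only to \emph{produce} a pinching periodic point via $\lambda^c_1>0>\lambda^c_2$ and Katok---once such a point is assumed, as in $\mathcal P^r_\mu(M)$, the perturbation argument is identical in both settings.)
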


\begin{conj} If $r\geq 2$ and $f\in \mathcal{P}^r_{\mu}(M)$, then $f$ is a $C^r$ continuity point for the center Lyapunov exponents. 
\end{conj}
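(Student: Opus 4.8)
The plan is to reduce the conjecture to a lower semicontinuity statement for the \emph{center gap} and then to attack the remaining case with the Invariance Principle of Section~4. Write $g(f):=\lambda^c_1(f)-\lambda^c_2(f)\ge 0$. Since every $f\in\mathcal P^r_\mu(M)$ preserves $\mu$ and has $2$-dimensional center bundle, $\lambda^c_1(f)+\lambda^c_2(f)=\int\log|\det(Df_x|E^c(x))|\,d\mu$, and as $E^c$ depends continuously on $f$ on the $C^1$-open set $\mathcal P^r_\mu(M)$, this quantity is continuous in the $C^1$ topology. Moreover, by Kingman's theorem $\lambda^c_1(f)=\inf_{n\ge 1}\frac1n\int\log\|Df^n_x|E^c(x)\|\,d\mu$ is an infimum of $C^1$-continuous functions, hence upper semicontinuous, and dually $\lambda^c_2$ is lower semicontinuous. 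Thus for any $f_k\to f$ in $\mathit{Diff}^r_\mu(M)$ one always has $\limsup_k\lambda^c_1(f_k)\le\lambda^c_1(f)$ and $\liminf_k\lambda^c_2(f_k)\ge\lambda^c_2(f)$; if $g(f)=0$, these together with $\lambda^c_2(f_k)\le\lambda^c_1(f_k)$ force $\lambda^c_i(f_k)\to\lambda^c_i(f)$ by a squeeze. So the conjecture is equivalent to proving that for every $f\in\mathcal P^r_\mu(M)$ with $g(f)>0$ and every $f_k\to f$ one has $\liminf_k g(f_k)\ge g(f)$; combined with the continuity of the sum and the upper semicontinuity of $\lambda^c_1$ this yields $\lambda^c_i(f_k)\to\lambda^c_i(f)$ and hence continuity at $f$.

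For the remaining case fix $f$ with $g(f)>0$, so by Oseledets $E^c=E^{c,+}\oplus E^{c,-}$ $\mu$-a.e.\ with exponents $\lambda^c_1>\lambda^c_2$. The $\alpha$-pinched and $\alpha$-bunched hypotheses make $Df|E^c$ a fiber-bunched H\"older linear cocycle, so it admits stable and unstable holonomies and Theorem~\ref{invariant} applies. I would apply it to the $\mathbb{P}(Df|E^c)$-invariant measure $x\mapsto\delta_{\mathbb{P}E^{c,+}_x}$, whose fiberwise exponent is $\lambda^c_2-\lambda^c_1=-g(f)<0$: this should give that $E^{c,+}$ is invariant under the unstable holonomies (and dually $E^{c,-}$ under the stable holonomies). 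Next I would use the pinching hyperbolic periodic point $p$ of Definition~\ref{pinch}: $Df^{n_p}_p|E^c(p)$ has exactly two invariant lines, with distinct norms both $\ne 1$, and the one of larger norm must equal $E^{c,+}_p$, since along the periodic orbit the top center Oseledets space is the dominating eigenspace. Hence $E^{c,+}$ is the unstable-holonomy transport of the robustly defined eigenline $E^{c,u}(p)$, and $\lambda^c_1(f)=\int\log\|Df_x|E^{c,+}_x\|\,d\mu$ is expressed through holonomy data anchored at $p$. For $f_k\to f$ with $g(f_k)>0$ — which holds for large $k$ by Theorem~C since $\mathcal P^r_\mu(M)$ is $C^1$-open — the continuation $p_k$ and its dominating eigenline $E^{c,u}(p_k)$ vary continuously, the unstable holonomies of $Df_k|E^c$ vary continuously with $f_k$, and accessibility is uniform on $\mathcal P^r_\mu(M)$ (cf.\ Proposition~\ref{uniform}); the target is to conclude that the corresponding bundle for $f_k$ converges to $E^{c,+}$, so that $\lambda^c_1(f_k)=\int\log\|Df_k|E^{c,+}_{f_k}\|\,d\mu\to\lambda^c_1(f)$ and hence $\liminf_k g(f_k)\ge g(f)$.

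The hard part will be exactly this last identification. The Invariance Principle only gives that $E^{c,+}$ is invariant under the \emph{unstable} holonomies (and $E^{c,-}$ under the \emph{stable} ones), not under both simultaneously, so one cannot directly run the accessibility-rigidity argument to make $E^{c,+}$ continuous, nor to identify the continuous unstable-holonomy transport of $E^{c,u}(p_k)$ with the genuine top Oseledets bundle of $f_k$, which a priori is only measurable. Equivalently, one would need to upgrade the bare inequality $g(f)>0$ — which over the single ergodic measure $\mu$ does not by itself yield a uniformly dominated splitting — to some structure that persists under $C^r$-perturbation: either a uniformly dominated \emph{continuous} center splitting, on which $\lambda^c_1$ and $\lambda^c_2$ are visibly continuous because they become integrals over a continuously varying splitting, or H\"older regularity together with uniqueness of the unstable-holonomy-invariant center line. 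I expect this to demand a genuinely new ingredient — for instance a quantitative Ledrappier--Young-type control of the conditional measures along $W^u$ in terms of $g$, or an argument using the pinching periodic point to rule out competing $u$-invariant center bundles — rather than a routine refinement, which is presumably why at present only the open and dense statement (Theorem~D) is available and the residual difficulty is confined to the persistently non-dominated, positive-gap regime.
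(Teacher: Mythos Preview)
This statement is \emph{Conjecture~2} in the paper; it is explicitly left open and the paper contains no proof of it. What the paper does prove is the weaker Theorem~D (open and dense continuity), together with the structural Proposition~\ref{discont}: if $f$ is a discontinuity point then \emph{both} $m^{+}$ and $m^{-}$ are $su$-states, so by Theorem~\ref{invariant} the Oseledets lines $E^{c,\pm}$ extend to continuous, bi-holonomy-invariant sections. The paper then exploits this rigidity not to prove continuity at $f$, but to perturb $f$ and destroy the rigidity, yielding Theorem~D. So there is no ``paper's own proof'' to compare your proposal against; you are attempting something the authors state as an open problem, and your final paragraph correctly acknowledges that a genuinely new idea is needed.

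Your reduction in the first paragraph is fine and standard: continuity of $\lambda^c_1+\lambda^c_2$ plus upper/lower semicontinuity of $\lambda^c_1,\lambda^c_2$ reduces the conjecture to lower semicontinuity of the gap $g$ at points where $g(f)>0$. Your second paragraph, however, has a circularity you do not flag: you write that ``the one of larger norm must equal $E^{c,+}_p$'', but $E^{c,+}$ is only defined $\mu$-a.e., and the periodic orbit of $p$ has measure zero. Identifying the Oseledets line with the eigenline at $p$ requires a \emph{continuous} extension of $E^{c,+}$ to all of $M$, which in the paper is obtained only via Proposition~\ref{discont} under the hypothesis that $f$ is already a discontinuity point --- exactly the wrong direction for what you want. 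The unstable holonomy alone cannot transport the eigenline at $p$ to a full-measure set either, since $W^{u}(p)$ has measure zero; one needs both holonomies and accessibility, and for that one needs $m^{+}$ to be an $s$-state as well, which you correctly note the Invariance Principle does \emph{not} give from $g(f)>0$ alone.

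In short: your diagnosis of the obstruction (only $u$-invariance of $E^{c,+}$, no mechanism to upgrade to $su$-invariance or to a continuous/dominated splitting without assuming discontinuity) is exactly right and matches the boundary of what the paper achieves. The invocation of Theorem~C only tells you $g(f_k)>0$ eventually, not that $g(f_k)\to g(f)$, so it does not close the gap. What you have written is a correct outline of why the conjecture is plausible and where it gets stuck, not a proof.
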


\subsection{Strategy of the proof} 

We are going to discuss the ideas of the proof of Theorem A and B. The proof of Theorem C and D is analogous. 

In Section 4 we prove that if $f$ is a discontinuity point for the center Lyapunov exponents, then the fiber bundle $\mathbb{P}(E^c)$ admits two continuous sections, $x\mapsto a_x$ and $x\mapsto b_x$. If $\mathit{F=Df\vert E^c}$, then these sections are invariant by the cocycle $\mathbb{P}(F)$ and by the invariant stable and unstable holonomies of $\mathbb{P}(F)$. Observe that Corollary 2 is a consequence of this result.  

Theorem A will follow from the fact that the diffeomorphisms having zero center Lyapunov exponents form a closed subset. In order to see this, we take $f_k\to f$ with $\lambda^c_1(f_k)=\lambda^c_2(f_k)=0$ and suppose that $\lambda^c_1(f)\neq \lambda^c_2(f)$. Then, $f$ is a discontinuity point for the center Lyapunov exponents and we have two continuous sections of $\mathit{\mathbb{P}(E^c(f))}$, $x\mapsto a_x$ and $x\mapsto b_x$, with the properties stated above.

Using the Invariance Principle of Avila and Viana, we prove that for every $k$ big enough, there exists a continuous section of $\mathit{\mathbb{P}(E^c(f_k))}$, $x\mapsto a_{k,x}$, which is invariant by the cocycle $\mathbb{P}(F_k)$ and by the invariant stable and unstable holonomies of $\mathbb{P}(F_k)$. Moreover, $a_{k,x}$ is close to $a_x$ or to $b_x$ for every $x\in M$. This will imply that $\lambda^c_1(f_k)\to \lambda^c_1(f)$ or $\lambda^c_2(f_k)\to \lambda^c_2(f)$. Both options contradict the hypothesis of $\lambda^c_1(f_k)=\lambda^c_2(f_k)=0$.

In order to prove Theorem B we will find a diffeomorphism $g$ which is arbitrarily close to $f$ and a neighborhood of $g$, $\mathcal{V}(g)$, such that any diffeomorphism $h\in \mathcal{V}(g)$ does not admit continuous sections of $\mathbb{P}(E^c(h))$ which are invariant by the cocycle $\mathbb{P}(H)$ and by the invariant stable and unstable holonomies of $\mathbb{P}(H)$ for $\mathit{H=Dh\vert E^c(h)}$. We will use the same mechanisms than in \cite{M} to achieve this goal. In Section 7 we give more details about the ideas behind the proof of this theorem. 

Observe that in both cases we are working with sections of $\mathit{\mathbb{P}(E^c(f))}$ and $\mathit{\mathbb{P}(E^c(g))}$ where $g$ is close to $f$. In order to be able to estimate the distance between them, we will consider both fiber bundles as subsets of $\mathit{\mathbb{P}(TM)}$.

\section{Center Derivative Cocycle} 

Let $r\geq 2$, $*\in \{\mu, \omega\}$ and $f\in \mathit{PH}^r_{*}(M)$ with $\dim\, E^c=2$. Recall that $\omega$ denotes a symplectic form and $\mu$ denotes a probability measure in the Lebesgue class. 

We will consider the \emph{center derivative cocycle} associated to $f$, that is, the linear cocycle $F'$ defined by $\mathit{F'=Df\vert E^c}$. Observe that the extremal Lyapunov exponents of $F'$, $\mathit{\lambda_{\pm}(F',x)}$ coincide with the center Lyapunov exponents of $f$.

From now on we fix the Riemannian metric given by Equation (\ref{ph}).

If $\eta\colon M\to \mathbb{R}$ is defined by $\eta(x)=|\det\, F'_x|^{-1/2}$, then we can consider a new cocycle over $f$ by $\mathit{F_x=\eta(x)\cdot F'_x}$. Notice that $\left|\det\, F_x\right|=1$ for every $x\in M$ and the extremal Lyapunov exponents of $F$ satisfy the following,
\begin{equation}\label{lyap0}
\begin{aligned}
\mathit{\lambda_{\pm}(F,x)}&=\mathit{\lambda_{\pm}(F',x)} + \int{ \log \left|\eta(x)\right| \, d\mu}\\
                  &=\mathit{\lambda^c_{1,2}(f,x)} + \int{ \log \left|\eta(x)\right| \, d\mu}
\end{aligned}
\end{equation}

Let $\pi\colon \mathcal{E}\to M$ be a fiber bundle with smooth fibers modeled on some Riemannian manifold $N$. A \emph{smooth cocycle} over $f$ is a continuous transformation $\mathcal{F}\colon \mathcal{E}\to \mathcal{E}$ such that $\mathit{\pi\circ \mathcal{F}=f\circ \pi}$, every $\mathcal{F}_x\colon \mathcal{E}_x\to \mathcal{E}_{f(x)}$ is a $C^1$ diffeomorphism depending continuously on $x$ and the norms of the derivative $D\mathcal{F}_x(\xi)$ and its inverse are bounded.

The \emph{projective cocycle} associated to a linear cocycle $G\colon V \to V$ over $f$ is the smooth cocycle $\mathbb{P}(G)\colon \mathbb{P}(V)\to \mathbb{P}(V)$ whose action on the fibers is given by the projectivization of $G_x\colon V_x \to V_{f(x)}$. 

Notice that $\mathbb{P}(F)=\mathbb{P}(F')$. Since $\dim\,E^c=2$, $\mathbb{P}(F)$ is a cocycle of circle diffeomorphisms over $f$. Moreover, there always exists a $\mathbb{P}(F)$-invariant probability measure $m$ that projects down to $\mu$. This is true because the projective cocycle $\mathbb{P}(F)$ is continuous and the domain $\mathbb{P}(E^c)$ is compact. 

The extremal Lyapunov exponents of $\mathbb{P}(F)$ for $m$ exist and satisfy,
\begin{equation}\label{lyap2}
\begin{aligned}
\mathit{\lambda_{+}(\mathbb{P}(F), x,\xi)}\leq \mathit{\lambda_{+}(F,x)} - \mathit{\lambda_{-}(F,x)}& \quad \text{and}\\
&\mathit{\lambda_{-}(\mathbb{P}(F), x,\xi)}\geq \mathit{\lambda_{-}(F,x)} - \mathit{\lambda_{+}(F,x)},
\end{aligned}
\end{equation}
for every $x\in M$ and $\xi\in \mathit{\mathbb{P}(E^c(x))}$ where they are defined.

\begin{defn}\label{hol} Let $\mathcal{F}\colon \mathcal{E}\to \mathcal{E}$ be a smooth cocycle over $f$. An \emph{invariant stable holonomy} for $\mathcal{F}$ is a family $h^{s}$ of homeomorphisms $h^{s}_{x,y}\colon \mathcal{E}_x\to \mathcal{E}_y$, defined for all $x$ and $y$ in the same strong-stable leaf of $f$ and satisfying
\begin{enumerate} [label=\emph{(\alph*)}]
\item $h^{s}_{y,z}\circ h^{s}_{x,y}= h^{s}_{x,z}$ and $h^{s}_{x,x}=Id$;
\item $\mathcal{F}_y\circ h^{s}_{x,y}= h^{s}_{f(x),f(y)}\circ \mathcal{F}_x$; 
\item $(x,y, \xi)\mapsto h^{s}_{x,y}(\xi)$ is continuous when $(x,y)$ varies in the set of pairs of points in the same local strong-stable leaf;
\item there are $C>0$ and $\beta >0$ such that $h^{s}_{x,y}$ is $(C,\beta)$-H\"older continuous for every $x$ and $y$ in the same local strong-stable leaf.
\end{enumerate}
An \emph{invariant unstable holonomy} for $\mathcal{F}$ can be defined analogously, for pairs of points in the same strong-unstable leaf.
\end{defn}

Condition (c) in Definition \ref{hol} means that given any $\epsilon>0$ and any $(x,y, \xi)$ with $y\in W^s_{loc}(x)$ and $\xi\in \mathcal{E}_x$, there exists $\delta>0$ such that $dist(h^s_{x,y}(\xi), h^s_{x',y'}(\xi'))<\epsilon$ for every $(x', y', \xi')$ with $y'\in W^s_{loc}(x')$, $\xi'\in \mathcal{E}_{x'}$, $dist(x, x')<\delta$, $dist(y, y')<\delta$ and $dist(\xi, \xi')<\delta$. Here to consider the distance between points in different fibers you can think that the fiber bundle has been trivialized in the neighborhoods of $\mathcal{E}_x$ and $\mathcal{E}_y$. 

If $f$ is $\alpha$-pinched and $\alpha$-bunched (Definition \ref{holder} and Definition \ref{bunched}), then $\mathbb{P}(F)$ admits invariant stable and invariant unstable holonomies. This is a consequence of Section 3 of \cite{ASV}. Moreover, if $x$ and $y$ are in the same local strong-stable leaf, then $h^s_{x,y}=\mathbb{P}(H^s_{x,y})$ where $H^s_{x,y}\colon \mathit{E^c(x)}\to \mathit{E^c(y)}$ is a linear isomorphism. Therefore, in the setting we are studying the holonomies $h^s_{x,y}$ are Lipschitz for every $x$ and $y$ in the same local strong-stable leaf. This is also true for the invariant unstable holonomy. 


If $\pi\colon \mathcal{E}\to V$ is a fiber bundle over $M$ and $m$ a probability measure in $\mathcal{E}$ with $\pi_{*}m=\mu$, then there exists a disintegration of $m$ into conditional probabilities $\left\{m_{x} : x\in M \right\}$ along the fibers which is essentially unique, that is, a measurable family of probability measures such that $m_x(\mathcal{E}_x)=1$ for almost every $x\in M$ and $$m(U)=\int m_x(U\cap \mathcal{E}_x) d\mu(x),$$ for every measurable set $U\subset \mathcal{E}$. See \cite{Rok}.

\begin{defn} Let $\mathcal{F}$ be a smooth cocycle over $f$ and $h^s$ an invariant stable holonomy for $\mathcal{F}$. We say a disintegration $\left\{m_{x} : x\in M \right\}$ is \emph{s-invariant} if $$(h^{s}_{x,y})_{*}m_{x}=m_{y} \quad \text{for every} \; x\; \text{and}\; y \; \text{in the same strong-stable leaf.}$$ One speaks of \emph{essential s-invariance} if this holds for $x$ and $y$ in some full $\mu$-measure set. The definitions of \emph{u-invariance} and \emph{essential u-invariance} are analogous. The disintegration is \emph{bi-invariant} if it is both s-invariant and u-invariant and we call it \emph{bi-essentially invariant} if it is both essentially s-invariant and essentially u-invariant.
\end{defn}

\begin{defn} Let $\mathcal{F}$ be a smooth cocycle over $f$ admitting holonomies and $m$ an $\mathcal{F}$-invariant probability measure with $\pi_{*}m=\mu$. If $m$ admits some essentially s-invariant disintegration, then it is called \emph{s-state}. The definition of \emph{u-state} is analogous and we say that $m$ is an \emph{su-state} if it is both an s-state and a u-state. 
\end{defn}

\section{Invariance Principle and discontinuity points}

One of the main tools in the proof of our results is the Invariance Principle, which was first developed by Furstenberg \cite{F} and Ledrappier \cite{L} for random matrices and was extended by Bonatti, G\'omez-Mont, Viana \cite{BGV} to linear cocycles over hyperbolic systems and by Avila, Viana \cite{AV1} and Avila, Santamaria, Viana \cite{ASV} to general (diffeomorphisms) cocycles. In \cite{AV1} the base dynamics is still assumed to be hyperbolic, whereas in \cite{ASV}, it is taken to be partially hyperbolic and volume-preserving. 

In the following we are going to state two theorems from \cite{ASV} which are extensions to our setting of the main result in \cite{L}. The first one gives sufficient conditions for an $\mathcal{F}$-invariant probability measure to be an $s$-state or a $u$-state. 

\begin{teo}[Theorem 4.1 in \cite{ASV}]\label{su}
Let $f$ be a $C^1$ partially hyperbolic diffeomorphism, $\mathcal{F}$ a smooth cocycle over $f$, $\mu$ an $f$-invariant probability measure in the Lebesgue class and $m$ an $\mathcal{F}$-invariant probability measure projecting down to $\mu$.
\begin{enumerate} [label=\emph{(\alph*)}]
\item  If $\mathcal{F}$ admits invariant stable holonomies and $\mathit{\lambda_{-}(\mathcal{F}, x, \xi)}\geq 0$ at $m$-almost every point, then $m$ is an $s$-state. 
\item  If $\mathcal{F}$ admits invariant unstable holonomies and $\mathit{\lambda_{+}(\mathcal{F}, x, \xi)}\leq 0$ at $m$-almost every point, then $m$ is a $u$-state. 
\end{enumerate}
\end{teo}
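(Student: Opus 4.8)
The plan is to prove part $(a)$; part $(b)$ follows by applying $(a)$ to the inverse cocycle $\mathcal{F}^{-1}$ over $f^{-1}$, which exchanges the roles of the strong-stable and strong-unstable foliations and reverses the sign of all Lyapunov exponents along the fibers. So fix an $\mathcal{F}$-invariant measure $m$ projecting to $\mu$, assume $\mathcal{F}$ admits invariant stable holonomies $h^s$, and assume $\lambda_{-}(\mathcal{F},x,\xi)\ge 0$ at $m$-a.e.\ point. I want to produce an essentially $s$-invariant disintegration $\{m_x\}$ of $m$, i.e.\ one with $(h^s_{x,y})_{*}m_x = m_y$ for $x,y$ in the same strong-stable leaf on a full-measure set.

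First I would set up the right measurable framework: pass to a Rokhlin disintegration $\{m_x\}$ of $m$ along the fibers of $\mathcal{E}$, which exists and is essentially unique. The $\mathcal{F}$-invariance of $m$ together with essential uniqueness gives $(\mathcal{F}_x)_{*}m_x = m_{f(x)}$ for $\mu$-a.e.\ $x$. Next I would compare this disintegration with its "holonomy-transported" version. The idea, going back to Ledrappier's argument, is to consider the function $x\mapsto m_x$ as an element of the space of probability measures on the fibers, and to look at the Birkhoff averages of $m$ under $\mathcal{F}$ along stable leaves; the hypothesis $\lambda_{-}(\mathcal{F},x,\xi)\ge 0$ is precisely what forces the backward dynamics along the fibers to be non-contracting, so that the stable holonomy — which by definition intertwines $\mathcal{F}$ with itself and is H\"older — must send $m_x$ to $m_y$ rather than to some other measure. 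Concretely: for $y\in W^s_{\mathrm{loc}}(x)$, iterate forward; $f^n(x)$ and $f^n(y)$ get exponentially close, the cocycle maps $m_{f^n(x)}$ and $m_{f^n(y)}$ are related by $h^s_{f^n(x),f^n(y)}$, which by property (d) converges to the identity (in the appropriate trivialization) as $n\to\infty$; pulling back by $\mathcal{F}^{-n}$ and using that the fiber dynamics does not contract distances (the $\lambda_-\ge 0$ hypothesis, via an Egorov/Pliss-type argument over a positive-measure set, and then spreading by ergodicity-free measure-theoretic arguments) yields $(h^s_{x,y})_{*}m_x = m_y$.

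The technical heart, and what I expect to be the main obstacle, is making the last step rigorous without circular use of ergodicity: one must upgrade the "$h^s$ transports $m_x$ to $m_y$ asymptotically" statement to an exact identity on a full-measure saturated set. The standard device is to introduce the measure $m^s$ defined by disintegrating along local stable leaves and to show that $\log$ of a Radon–Nikodym–type cocycle between $\{m_x\}$ and $\{(h^s_{y,x})_*m_y\}$ is a coboundary whose Birkhoff sums are bounded — forcing it to vanish. This requires care with: (i) the distinction between "$s$-invariant" and "essentially $s$-invariant" (one only gets the latter directly, and should not claim more); (ii) the measurability of the family $\{m_x\}$ when transported by holonomies, which uses continuity property (c); and (iii) controlling the fiber distortion using the $(C,\beta)$-H\"older bound in (d) rather than mere continuity. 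I would organize the write-up as: Step 1, set up disintegration and $\mathcal{F}$-equivariance; Step 2, define the discrepancy between $\{m_x\}$ and its holonomy transport and show it is $\mathcal{F}$-invariant; Step 3, use $\lambda_-(\mathcal{F})\ge 0$ plus the H\"older holonomies to show the discrepancy is trivial along stable leaves; Step 4, deduce essential $s$-invariance and hence that $m$ is an $s$-state. Since this is Theorem 4.1 of \cite{ASV}, the cleanest route in the paper is to simply cite it; the sketch above is the argument one would reconstruct if needed.
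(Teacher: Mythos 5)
In the paper this statement is quoted verbatim from Theorem~4.1 of \cite{ASV} and is used as a black box; no proof appears here, and you correctly conclude at the end of your sketch that the appropriate move is to cite. Your reconstruction of the underlying argument points in the right direction (the Ledrappier-type invariance mechanism: take a Rokhlin disintegration $\{m_x\}$, compare it with its stable-holonomy transport, and use the fiber exponent hypothesis to force equality), and it is to your credit that you isolate the ``technical heart'' as the step where the real work lies. Two cautions on the concrete chain you describe, though. First, the step that compares $m_{f^n(x)}$ with $m_{f^n(y)}$ after iterating forward implicitly uses some continuity of $x\mapsto m_x$; but a Rokhlin disintegration is only measurable, and continuity of the conditionals (on an accessible set) is a \emph{conclusion} of Theorem~D of \cite{ASV}, not an available input at this stage. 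Second, upgrading ``the holonomy transports $m_x$ asymptotically close to $m_y$'' to exact essential $s$-invariance is not accomplished by a pointwise Pliss/Egorov pullback as you suggest: in \cite{ASV} (following Ledrappier and Avila--Viana), the exponent hypothesis $\lambda_-\geq 0$ enters through an integral (Jensen-type) inequality for an entropy-like quantity along the stable partition, a global argument, and it is there that ``asymptotically invariant'' is promoted to ``invariant.'' Your deduction of part~(b) from part~(a) by passing to $\mathcal{F}^{-1}$ over $f^{-1}$ is correct. For the purposes of this paper, the citation is the proof, exactly as you say.
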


The next theorem will allow us to conclude that an $su$-state is bi-invariant.

\begin{teo}[Theorem D in \cite{ASV}]\label{invariant}
Let $f$ be a $C^2$ partially hyperbolic center bunched diffeomorphism, $\mathcal{F}$ a smooth cocycle over $f$ admitting holonomies, $\mu$ an $f$-invariant probability measure in the Lebesgue class and $m$ an $\mathcal{F}$-invariant probability measure projecting down to $\mu$. 
If $m$ is an $su$-state, then $m$ admits a disintegration $\left\{m_{x} : x\in M \right\}$ along the fibers such that 
\begin{enumerate} [label=\emph{(\alph*)}]
\item  the disintegration is bi-invariant over a full measure bi-saturated set $M_{\mathcal{F}}\subset M$;
\item  if $f$ is accessible, then $M_{\mathcal{F}}=M$ and the conditional probabilities $m_{x}$ depend continuously on the base point $x\in M$, relative to the $\text{weak}^{*}$ topology.
\end{enumerate} 
\end{teo}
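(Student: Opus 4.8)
\emph{Reduction to a bi-essentially invariant disintegration.} The plan is to run a Ledrappier-type invariance argument adapted to the partially hyperbolic base. First I would use essential uniqueness of Rokhlin disintegrations: since $m$ is an $su$-state, it admits an essentially $s$-invariant disintegration and an essentially $u$-invariant one, and these agree for $\mu$-a.e.\ $x$. Hence there is a single disintegration $\{m_x\}$ and a full $\mu$-measure set $Z$ such that $(h^s_{x,y})_*m_x=m_y$ whenever $x,y\in Z$ lie on a common strong-stable leaf, and the analogous relation with $h^u$ along strong-unstable leaves. From here the two tasks are (i) to regularize this data along leaves and (ii) to propagate it across $su$-paths.

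\emph{Regularization along leaves.} Next I would prove that, after modification on a $\mu$-null set, the disintegration becomes genuinely $s$-invariant on a strong-stable-saturated full-measure set $M^s$, with $x\mapsto m_x$ continuous (weak$^*$) along each stable leaf contained in $M^s$. The mechanism: since $f$ is $C^2$, the strong-stable foliation is absolutely continuous, so by a Rokhlin--Fubini argument $\mu$-a.e.\ stable leaf $L$ meets $Z$ in a set of full conditional measure; fixing such an $L$ and a reference point $z\in Z\cap L$, the consistency relation $(h^s_{z,z'})_*m_z=m_{z'}$ on $Z\cap L$ together with the cocycle property of the holonomies lets one define $m_x:=(h^s_{z,x})_*m_z$ unambiguously for every $x\in L$; continuity along $L$ follows from continuity of $x\mapsto h^s_{z,x}$ (Definition \ref{hol}(c)), and the existence and Hölder/Lipschitz regularity of $h^s$ is exactly what the $\alpha$-pinching and $\alpha$-bunching (hence center bunching) give. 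Taking $M^s$ to be the union of these leaves yields an $s$-saturated full-measure set. Running the same argument with the strong-unstable foliation produces a $\mu$-a.e.\ equal version on a $u$-saturated full-measure set $M^u$ with the corresponding $u$-properties.

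\emph{Bi-invariance and the accessible case.} For part (a): since the strong-stable saturation, the strong-unstable saturation, and hence the countably iterated bi-saturation of a full-measure set remain full measure (again absolute continuity), I would take $M_{\mathcal{F}}$ to be the bi-saturation of the set where the $s$- and $u$-versions coincide; the explicit leafwise-continuous formulas force the $s$- and $u$-invariant extensions to be compatible along every leg of every $su$-path inside $M_{\mathcal{F}}$, producing a bi-invariant disintegration there. For part (b), assume $f$ accessible. Because $M_{\mathcal{F}}$ is bi-saturated, any $su$-path starting in $M_{\mathcal{F}}$ stays in $M_{\mathcal{F}}$, and along such paths the conditional measures are carried into one another by the continuous stable/unstable holonomies. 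Using accessibility together with the fact (valid in this center-bunched setting) that nearby points are joined by $su$-paths of uniformly bounded length and complexity depending continuously on the endpoints, one shows: for $x_n\to x$ in $M$ and points $x_n'\in M_{\mathcal{F}}$ chosen arbitrarily close to $x_n$, the measures $m_{x_n'}$, transported along short $su$-paths lying in $M_{\mathcal{F}}$, form a weak$^*$-Cauchy sequence with limit independent of all choices. This limit defines a continuous extension of $x\mapsto m_x$ to all of $M$, still a bi-invariant disintegration, forcing $M_{\mathcal{F}}=M$.

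\emph{Main obstacle.} The hard part will be the passage from ``essentially invariant'' (an almost-everywhere statement about pairs) to ``invariant on a saturated set with a genuinely continuous leafwise representative'': this is where absolute continuity of the strong foliations and a density-point/Rokhlin--Fubini argument are indispensable, and it is the technical heart. The second delicate point, in the accessible case, is the geometric control of $su$-paths — that nearby points are connected by short, continuously varying $su$-paths — needed to upgrade leafwise continuity to genuine continuity on $M$; center bunching enters precisely to guarantee both the existence and continuity of the holonomies and the good behavior of the accessibility structure.
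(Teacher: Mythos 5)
This statement is Theorem~D of~\cite{ASV}; the present paper does not reprove it, so there is no in-paper proof to compare against, and your attempt can only be measured against the argument in~\cite{ASV} itself.

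Your skeleton has the right shape (regularize along stable leaves, regularize along unstable leaves, identify the two versions a.e., saturate, use accessibility), but there is a genuine gap at the saturation step, and it traces back to a misattribution of the role of center bunching. You invoke center bunching (and even $\alpha$-pinching/$\alpha$-bunching, which are not among the hypotheses here) to justify existence and regularity of the holonomies. That is not where it enters: the cocycle is already \emph{assumed} to admit holonomies, so nothing needs to be proved there. The center bunching of the base diffeomorphism $f$ is what makes the Burns--Wilkinson ``julienne density point'' machinery available, and that machinery is the technical heart of~\cite{ASV}'s proof of Theorem~D. Without it, the passage you describe --- ``take $M_{\mathcal{F}}$ to be the bi-saturation of the set where the $s$- and $u$-versions coincide'' --- does not produce a bi-invariant disintegration. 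The difficulty is path-dependence: if $G$ is the full-measure set where the two regularized disintegrations agree and you propagate $m_x$ along an $su$-path starting in $G$, the intermediate nodes of the path generically leave $G$, so propagating the $s$-invariant version along an $s$-leg and then the $u$-invariant version along a $u$-leg is not a priori consistent, and two different $su$-paths with the same endpoints can deliver different measures. Your sentence asserting that ``the explicit leafwise-continuous formulas force the $s$- and $u$-invariant extensions to be compatible along every leg of every $su$-path inside $M_{\mathcal{F}}$'' is exactly the point that needs proof, not an observation. In~\cite{ASV} this is handled by replacing Lebesgue density points with julienne density points: center bunching guarantees (via the Burns--Wilkinson ergodicity technology) that the set of julienne density points of a full-measure set is genuinely bi-saturated, and that propagation across $su$-paths preserves the julienne density-$1$ property, which is what kills the path-dependence. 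Your ``Rokhlin--Fubini plus Lebesgue density'' substitute is strictly weaker and is known to be insufficient in the partially hyperbolic setting precisely because Lebesgue density points are not preserved by the stable/unstable holonomies of the base. Finally, for part~(b), the Cauchy-sequence construction you sketch is more work than needed: once $M_{\mathcal{F}}$ is a nonempty bi-saturated set and $f$ is accessible, $M_{\mathcal{F}}$ contains a full accessibility class, hence equals $M$, and continuity then follows from the continuity of the holonomies together with the uniform bound on $su$-path complexity (Proposition~\ref{uniform}).
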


\begin{obs}\label{finv} Observe that if $m$ is an $\mathcal{F}$-invariant probability measure which admits a continuous disintegration $\{m_x: x\in M\}$, then $(\mathcal{F}_x)_{*}m_{x}=m_{f(x)}$ for every $x\in M$.
\end{obs}

In the last part of this section, we are going to prove some results that will be used for the proof of the theorems in both settings, the symplectic and the volume-preserving one. In order to simplify the statements of these results, we define the following set:

\begin{defn}\label{gral} If $r\geq 2$ and $*\in \{\mu, \omega\}$, then $\mathcal{B}^r_{*}(M)$ denotes the subset of $\mathit{PH}^r_{*}(M)$ where $f\in \mathcal{B}^r_{*}(M)$ if $f$ is accessible, $\alpha$-pinched and $\alpha$-bunched for some $\alpha>0$ and its center bundle is 2-dimensional.
\end{defn}

Recall that $\omega$ denotes a symplectic form and $\mu$ denotes a probability measure in the Lebesgue class. Observe that this is a $C^1$ open set and every $f\in \mathcal{B}^r_{*}(M)$ is ergodic. In fact, Definition \ref{gral} coincides with Definition \ref{symple} when $*=\omega$ and the set $\mathcal{P}^r_{\mu}(M)$ in Definition \ref{volu} is the open subset of $\mathcal{B}^r_{\mu}(M)$ where $f\in \mathcal{P}^r_{\mu}(M)$ if $f$ has a pinching hyperbolic periodic point. 

Let $f\in \mathcal{B}^r_{*}(M)$ and fix the Riemannian metric given by Equation (\ref{ph}). Let $\eta(x)=|\det\, Df_x\vert E^c(x)|^{-1/2}$ for every $x\in M$. We consider the linear cocycle $F$ defined by $\mathit{F_x=\eta(x)\cdot Df_x\vert E^c(x)}$. The relation between the extremal Lyapunov exponents of $F$ and the center Lyapunov exponents of $f$ is given in Equation (\ref{lyap0}). Then, the extremal Lyapunov exponents of $F$ are constant almost everywhere and satisfy $\lambda_{+}(F)+\lambda_{-}(F)=0$.

We will study the following two cases separately: $$\lambda_{+}(F)=\lambda_{-}(F)\quad \text{and} \quad \lambda_{+}(F)\neq \lambda_{-}(F) .$$

\subsection{Zero Lyapunov exponents.}

From Equation (\ref{lyap2}), if $\lambda_{+}(F)=\lambda_{-}(F)$, then for every $\mathbb{P}(F)$-invariant probability measure $m$ projecting down to $\mu$, $\mathit{\lambda_{\pm}(\mathbb{P}(F), x, \xi)}=0$ for $m$-almost every point. The following theorem is a direct consequence of Theorem \ref{su} and Theorem \ref{invariant} and will allow us to obtain results for $\mathbb{P}(F)$ in this case.

\begin{inv} [Theorem C in \cite{ASV}] Let $f\colon M\to M$ be a $C^{2}$ partially hyperbolic, volume-preserving, center bunched diffeomorphism and $\mu$ be an invariant probability in the Lebesgue class. Let $\mathcal{F}$ be a smooth cocycle over $f$ admitting holonomies and $m$ be an $\mathcal{F}$-invariant probability measure projecting down to $\mu$. Suppose that $\mathit{\lambda_{-}(\mathcal{F}, x,\xi)}=\mathit{\lambda_{+}(\mathcal{F}, x,\xi)}=0$ at $m$-almost every point.

Then, $m$ admits a disintegration $\left\{m_{x} : x\in M \right\}$ along the fibers such that 
\begin{enumerate} [label=\emph{(\alph*)}]
\item  the disintegration is bi-invariant over a full measure bi-saturated set $M_{\mathcal{F}}\subset M$;
\item  if $f$ is accessible, then $M_{\mathcal{F}}=M$ and the conditional probabilities $m_{x}$ depend continuously on the base point $x\in M$, relative to the $\text{weak}^{*}$ topology.
\end{enumerate}
\end{inv}

\begin{cor}\label{zeroF} Let $r\geq 2$ and $*\in \{\mu,\omega\}$. Suppose $f\in \mathcal{B}^r_{*}(M)$, $\mathit{F=\eta\cdot Df\vert E^c}$ and $\lambda_{+}(F)=\lambda_{-}(F)$. If $m$ is an $\mathbb{P}(F)$-invariant probability measure projec\-ting down to $\mu$, then $m$ admits a disintegration $\left\{m_{x} : x\in M \right\}$ along the fibers which is bi-invariant. Moreover, the conditional probabilities $m_{x}$ depend continuously on the base point $x\in M$, relative to the $\text{weak}^{*}$ topology. 
\end{cor}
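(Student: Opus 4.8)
The plan is to deduce the corollary directly from the Invariance Principle (Theorem C of \cite{ASV}) stated above, applied to the cocycle $\mathcal{F}=\mathbb{P}(F)$ and the measure $m$; the only real content is checking its hypotheses. Since $f\in\mathcal{B}^r_{*}(M)$ with $r\ge 2$, the map $f$ is a $C^2$ partially hyperbolic volume-preserving diffeomorphism which, being $\alpha$-bunched, is center bunched in the sense of \cite{BW} by Remark \ref{ergod}; the relevant invariant probability in the Lebesgue class is the given $\mu$ when $*=\mu$ and the normalized volume induced by $\omega$ when $*=\omega$, which $f$ preserves. Being moreover $\alpha$-pinched and $\alpha$-bunched, the projective cocycle $\mathbb{P}(F)=\mathbb{P}(F')$ is a smooth cocycle over $f$ admitting invariant stable and unstable holonomies, by Section 3 of \cite{ASV}. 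Finally, $m$ is $\mathbb{P}(F)$-invariant and projects down to $\mu$ by hypothesis.

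It remains to verify that the extremal Lyapunov exponents of $\mathbb{P}(F)$ vanish $m$-almost everywhere. I would argue as follows: by construction $|\det F_x|=1$ for every $x$, whence $\lambda_{+}(F)+\lambda_{-}(F)=0$, and together with the standing hypothesis $\lambda_{+}(F)=\lambda_{-}(F)$ this forces $\lambda_{+}(F)=\lambda_{-}(F)=0$. Substituting into the inequalities (\ref{lyap2}) gives $\lambda_{+}(\mathbb{P}(F),x,\xi)\le 0$ and $\lambda_{-}(\mathbb{P}(F),x,\xi)\ge 0$ wherever these exist; since always $\lambda_{-}(\mathbb{P}(F),x,\xi)\le\lambda_{+}(\mathbb{P}(F),x,\xi)$, both exponents equal $0$ at $m$-almost every point, as required.

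With all hypotheses in place, the Invariance Principle provides a disintegration $\{m_x:x\in M\}$ of $m$ along the fibers of $\mathbb{P}(E^c)$ which is bi-invariant over a full measure bi-saturated set $M_{\mathbb{P}(F)}\subset M$, and since $f$ is accessible, part (b) of that statement gives $M_{\mathbb{P}(F)}=M$. Hence the disintegration is globally bi-invariant and the conditional probabilities $m_x$ depend continuously on $x\in M$ in the weak$^{*}$ topology, which is exactly the assertion of the corollary.

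There is essentially no serious obstacle: the statement is a repackaging of the Invariance Principle together with the elementary bookkeeping carried out in (\ref{lyap0})--(\ref{lyap2}) relating the exponents of $Df\vert E^c$, of the normalized cocycle $F$, and of its projectivization. The only points needing a little care are the routine checks that in the symplectic case the reference measure to which $m$ projects is the invariant Lebesgue-class volume, so that \cite{ASV} applies verbatim, and that the projective exponents are defined $m$-almost everywhere --- which is already part of the validity of (\ref{lyap2}).
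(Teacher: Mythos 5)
Your proposal is correct and follows the same route as the paper: observe via Equation (\ref{lyap2}) that the hypothesis $\lambda_+(F)=\lambda_-(F)$ forces the extremal exponents of $\mathbb{P}(F)$ to vanish $m$-almost everywhere, then apply the Invariance Principle (Theorem C of \cite{ASV}) together with accessibility. The paper treats this as immediate and does not spell out the hypothesis checks; your only superfluous step is first deducing $\lambda_\pm(F)=0$ from $|\det F_x|=1$, when the substitution into (\ref{lyap2}) needs only $\lambda_+(F)-\lambda_-(F)=0$.
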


\subsection{Non-zero center Lyapunov exponents.}

Now we study the case $\lambda_{+}(F)\neq \lambda_{-}(F)$. There are classical versions of Proposition \ref{nozero} and Proposition \ref{discont} for cocycles with a fixed base. See for example Section 6 of \cite{AV1}. Here we extend those standard results to the case where the cocycle depends on the base diffeomorphism. 

\begin{prop}\label{nozero} Let $r\geq 2$ and $*\in \{\mu, \omega\}$. If $f\in \mathcal{B}^r_{*}(M)$, $\mathit{F=\eta\cdot Df\vert E^c}$ and $\lambda_{+}(F)> 0 > \lambda_{-}(F)$, then there exist two $\mathbb{P}(F)$-invariant probability measures projecting down to $\mu$ denoted by $m^{+}$ and $m^{-}$, which are a u-state and an s-state respectively. Moreover, if $m$ is any $\mathbb{P}(F)$-invariant probability measure projecting down to $\mu$, then there exists $t\in [0,1]$ such that $m=t\, m^{+} + (1-t)\, m^{-}$.
\end{prop}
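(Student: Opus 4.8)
The plan is to realize $m^{+}$ and $m^{-}$ as the measures supported on the two Oseledets line fields of $F$ inside $E^{c}$, to check the $s$- and $u$-state properties directly from Theorem \ref{su}, and then to collapse every other $\mathbb{P}(F)$-invariant measure onto these two by a dominated-convergence argument. Neither accessibility nor the Invariance Principle (Theorem \ref{invariant}) is needed for this statement; it is really a consequence of Oseledets' theorem, the ergodicity of $\mu$, and Theorem \ref{su}.

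First, since $f$ is ergodic and $\lambda_{+}(F)=-\lambda_{-}(F)>0$, Oseledets' theorem gives, on a full $\mu$-measure $f$-invariant set, a measurable $DF$-invariant splitting $E^{c}(x)=E^{+}_{x}\oplus E^{-}_{x}$ into line bundles with $\frac1n\log\|F^{n}_{x}v\|\to\lambda_{\pm}(F)$ for every $0\neq v\in E^{\pm}_{x}$. Projectivizing yields measurable $\mathbb{P}(F)$-invariant sections $x\mapsto\xi^{\pm}_{x}:=\mathbb{P}(E^{\pm}_{x})$, and I set $m^{\pm}:=\int\delta_{\xi^{\pm}_{x}}\,d\mu(x)$; these are $\mathbb{P}(F)$-invariant probability measures projecting to $\mu$, with $\xi^{+}_{x}\neq\xi^{-}_{x}$ a.e. To see that $m^{+}$ is a $u$-state and $m^{-}$ an $s$-state, note that the fibers of $\mathbb{P}(F)$ are one-dimensional, so at each regular $x$ the unique fiber Lyapunov exponent of $\mathbb{P}(F)$ along $\xi^{+}$ equals $\lim_{n}\frac1n\bigl(\log\|F^{n}_{x}|E^{-}_{x}\|-\log\|F^{n}_{x}|E^{+}_{x}\|\bigr)=\lambda_{-}(F)-\lambda_{+}(F)<0$ — the standard computation of the derivative of a projective map at a dominated fixed direction — and along $\xi^{-}$ it equals $\lambda_{+}(F)-\lambda_{-}(F)>0$. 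Hence $\lambda_{+}(\mathbb{P}(F),x,\xi)\le 0$ at $m^{+}$-a.e. point and $\lambda_{-}(\mathbb{P}(F),x,\xi)\ge 0$ at $m^{-}$-a.e. point; since $f$ is $\alpha$-pinched and $\alpha$-bunched, $\mathbb{P}(F)$ admits invariant stable and unstable holonomies, so Theorem \ref{su}(b) applies to $m^{+}$ and Theorem \ref{su}(a) to $m^{-}$.

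For the classification, decompose an arbitrary $\mathbb{P}(F)$-invariant $m$ with $\pi_{*}m=\mu$ into $\mathbb{P}(F)$-ergodic components; since $\mu$ is $f$-ergodic, almost every component still projects to $\mu$, so it suffices to show that every \emph{ergodic} $\mathbb{P}(F)$-invariant $m$ projecting to $\mu$ equals $m^{+}$ or $m^{-}$. The set $\{(x,\xi):\xi=\xi^{-}_{x}\}$ is $\mathbb{P}(F)$-invariant; if it has full $m$-measure then $m=\int\delta_{\xi^{-}_{x}}\,d\mu=m^{-}$. Otherwise, by ergodicity it is $m$-null, so for $m$-a.e. $(x,\xi)$ the line $\xi$ has a nonzero component in $E^{+}_{x}$, and then $\|F^{n}_{x}|E^{-}_{x}\|/\|F^{n}_{x}|E^{+}_{x}\|\to 0$ forces $\mathbb{P}(F)^{n}(x,\xi)\to(f^{n}x,\xi^{+}_{f^{n}x})$. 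Setting $\Phi(x,\xi):=dist(\xi,\xi^{+}_{x})$, a bounded measurable function, $\mathbb{P}(F)$-invariance of $m$ gives $\int\Phi\circ\mathbb{P}(F)^{n}\,dm=\int\Phi\,dm$ for all $n$, while $\Phi\circ\mathbb{P}(F)^{n}\to 0$ $m$-a.e.; by dominated convergence $\int\Phi\,dm=0$, i.e. $m=m^{+}$. Thus the only ergodic such measures are $m^{+}$ and $m^{-}$, and a general $m$ equals $t\,m^{+}+(1-t)\,m^{-}$ for some $t\in[0,1]$.

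The only technical points are the identification, in the second paragraph, of the fiber Lyapunov exponent of $\mathbb{P}(F)$ along the Oseledets sections — which requires the projective action of $F$ near the dominated direction $E^{+}$ to be controlled uniformly enough along the orbit — and the \emph{almost everywhere} (rather than merely averaged) convergence $\mathbb{P}(F)^{n}\xi\to\xi^{+}_{f^{n}x}$ used in the third paragraph. Both follow immediately from Oseledets' theorem together with the one-dimensionality of the fibers, so I do not anticipate a genuine obstacle; the real content is the combination with Theorem \ref{su} and the reduction to ergodic components.
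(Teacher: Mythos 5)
Your proof takes essentially the same approach as the paper: the measures $m^{\pm}$ are built from the Oseledets decomposition in the same way, the same Lyapunov-exponent computation for $\mathbb{P}(F)$ along the invariant sections (your $\lambda_{-}(F)-\lambda_{+}(F)$ is the paper's $-2\lambda_{+}(F)$, since $\lambda_{+}(F)+\lambda_{-}(F)=0$) feeds into Theorem~\ref{su}, and the classification rests on the same attraction/repulsion mechanism. The paper dispatches the second part in one sentence (compact sets away from $\{\mathbb{P}(E^{+}),\mathbb{P}(E^{-})\}$ accumulate on $\mathbb{P}(E^{+})$ forward and on $\mathbb{P}(E^{-})$ backward); your ergodic-decomposition plus dominated-convergence writeup is a correct, more detailed rendering of that same idea.
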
 
\begin{proof}

Since $\lambda_{+}(F) > 0 > \lambda_{-}(F)$, by Equation (\ref{lyap0}), $\lambda^c_1(f)\neq \lambda^c_2(f)$. Let $E^c(x)=E^{+}_x\oplus E^{-}_x$ denote the decomposition given by the Theorem of Oseledets for $\mu$-almost every $x\in M$. Then, we can define two probability measures in $\mathbb{P}(E^c)$, $$m^{+}=\int \delta_{\mathbb{P}(E^{+}_x)}\, d\mu \quad \text{and} \quad m^{-}=\int \delta_{\mathbb{P}(E^{-}_x)}\, d\mu.$$ 
Notice that $m^{+}$ and $m^{-}$ are $\mathbb{P}(F)$-invariant probability measures and project down to $\mu$. Moreover, we can calculate its Lyapunov exponents and we obtain the following: $$\lambda_{\pm}(\mathbb{P}(F), m^{+})=-2\, \lambda_{+}(F) \quad \text{and} \quad \lambda_{\pm}(\mathbb{P}(F), m^{-})=-2\, \lambda_{-}(F).$$ Therefore, by Theorem \ref{su} (b) we conclude that $m^{+}$ is a $u$-state and by Theorem \ref{su} (a) that $m^{-}$ is an $s$-state. 

The proof of the second part of the proposition is a consequence of the fact that every compact subset of $\mathbb{P}(E^c)$ disjoint from $\{\mathbb{P}(E^{+}), \mathbb{P}(E^{-})\}$ accumulates on $\mathbb{P}(E^{+})$ in the future and on $\mathbb{P}(E^{-})$ in the past. 
\end{proof}

For $(x,v)\in \mathbb{P}(E^c)$, let $\Phi(x,v)=\log \left\|F_x(v)\right\|$. The next lemma is a classical result for linear cocycles. We refer the reader to Section 6 of \cite{V}.

\begin{lemma}\label{fust} The exponent $\lambda_{+}(F)$ coincides with the maximum of $\int{\Phi(x,v)\, dm}$ over all $\mathbb{P}(F)$-invariant measures $m$ projecting down to $\mu$. Moreover, when $\lambda_{+}(F)>0$ the probability measure $m^{+}$, defined in Proposition \ref{nozero}, realizes the maximum.
\end{lemma}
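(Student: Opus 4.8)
The plan is to pin down the maximum by two matching inequalities: that $\int \Phi\, dm \le \lambda_{+}(F)$ for every $\mathbb{P}(F)$-invariant probability $m$ projecting down to $\mu$, and that this value is attained, realized by $m^{+}$ in the case $\lambda_{+}(F)>0$. This is the classical Furstenberg-type variational characterization, so the reference to Section~6 of \cite{V} carries the technical details; below is how I would organize the argument in the present setting.

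For the upper bound I would use invariance of $m$ together with submultiplicativity of the operator norm. For $m$-a.e.\ $(x,v)$ one has the telescoping identity $\log\|F^n_x(v)\| = \sum_{k=0}^{n-1}\log\|F_{f^k(x)}(\widehat{F^k_x v})\|$, where $\widehat{\,\cdot\,}$ denotes the unit vector in a given direction; by $\mathbb{P}(F)$-invariance each summand integrates against $m$ to $\int\Phi\, dm$, whence $\int\log\|F^n_x(v)\|\, dm = n\int\Phi\, dm$ for all $n\ge 1$. Since $\|F^n_x(v)\|\le\|F^n_x\|$ for unit $v$, this yields $\int\Phi\, dm \le \tfrac1n\int\log\|F^n_x\|\, d\mu$ for every $n$. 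Letting $n\to\infty$ and using subadditivity of $n\mapsto\int\log\|F^n_x\|\, d\mu$, together with Kingman's subadditive ergodic theorem and the ergodicity of $f$ (Remark~\ref{ergod}), the right-hand side converges to the $\mu$-a.e.\ constant $\lambda_{+}(F)$. Hence $\int\Phi\, dm\le\lambda_{+}(F)$.

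To see that the supremum is attained and equals $\lambda_{+}(F)$, I would first note that $\Phi$ is continuous and bounded, since $|\det F_x|\equiv 1$ forces $\|F_x(v)\|$ to lie in $[\,\|F_x^{-1}\|^{-1},\|F_x\|\,]$ with bounds uniform in $x$; thus $m\mapsto\int\Phi\, dm$ is weak$^{*}$-continuous on the nonempty weak$^{*}$-compact set of $\mathbb{P}(F)$-invariant probabilities projecting to $\mu$, and the supremum is a maximum. For an ergodic such measure $m$, Birkhoff's and Oseledets' theorems identify $\int\Phi\, dm$ with $\lim_n\tfrac1n\log\|F^n_x(v)\|\in\{\lambda_{+}(F),\lambda_{-}(F)\}$ for $m$-a.e.\ $(x,v)$; passing to a general $m$ via its ergodic decomposition (whose components again project to $\mu$, since $\mu$ is $f$-ergodic) shows the maximal value is exactly $\lambda_{+}(F)$. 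When $\lambda_{+}(F)>0$ this is realized concretely by $m^{+}=\int\delta_{\mathbb{P}(E^{+}_x)}\, d\mu$ from Proposition~\ref{nozero}: for $m^{+}$-a.e.\ $(x,v)$ the vector $v$ spans the Oseledets space $E^{+}_x$, so $\tfrac1n\log\|F^n_x(v)\|\to\lambda_{+}(F)$ while staying uniformly bounded by $\max\{\log\|F\|_\infty,\log\|F^{-1}\|_\infty\}$; applying the bounded convergence theorem to the identity $\int\Phi\, dm^{+}=\tfrac1n\int\log\|F^n_x(v)\|\, dm^{+}$ gives $\int\Phi\, dm^{+}=\lambda_{+}(F)$.

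There is no genuine obstacle in this lemma; the only points that require care are bookkeeping. One must make sure the telescoping identity and the bounded-convergence step are applied to honestly bounded, integrable functions — which is precisely what the normalization $|\det F_x|\equiv 1$ guarantees, by making $\Phi$ bounded — and one must invoke ergodicity of $f$ so that the subadditive limit is the constant $\lambda_{+}(F)$ rather than a merely measurable function of $x$.
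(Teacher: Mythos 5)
The paper gives no proof of this lemma: it simply defers to Section~6 of \cite{V}, where this Furstenberg-type variational characterization is standard. Your argument is precisely that classical proof --- the telescoping identity plus Kingman for the upper bound $\int\Phi\,dm\le\lambda_{+}(F)$, and Oseledets with bounded convergence to compute $\int\Phi\,dm^{+}=\lambda_{+}(F)$ --- and it is correct; the ergodic-decomposition passage is a second, redundant route to the same upper bound, but does no harm.
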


The following result gives a characterization of the discontinuity points of the center Lyapunov exponents. 

\begin{prop}\label{discont} Let $r\geq 2$ and $*\in \{\mu, \omega\}$. Suppose $f\in \mathcal{B}^r_{*}(M)$ is a $C^r$ discontinuity point for the center Lyapunov exponents and $\mathit{F=\eta\cdot Df\vert E^c}$. Then, every $\mathbb{P}(F)$-invariant probability measure $m$ projecting down to $\mu$ is an su-state.
\end{prop}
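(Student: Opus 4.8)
The plan is to split according to whether $\lambda_{+}(F)=\lambda_{-}(F)$ or not, recalling $\lambda_{+}(F)+\lambda_{-}(F)=0$. If $\lambda_{+}(F)=\lambda_{-}(F)$ then both vanish, so by Equation (\ref{lyap2}) the exponents of $\mathbb{P}(F)$ vanish at $m$-almost every point, and Corollary \ref{zeroF} already provides a continuous bi-invariant disintegration of $m$; in particular $m$ is an su-state, and the discontinuity hypothesis is not needed. So from now on I assume $\lambda:=\lambda_{+}(F)>0>\lambda_{-}(F)$. By Proposition \ref{nozero} every $\mathbb{P}(F)$-invariant measure projecting to $\mu$ equals $t\,m^{+}+(1-t)\,m^{-}$, and since the disintegration of a convex combination is the convex combination of the disintegrations, it is enough to prove that $m^{+}$ and $m^{-}$ are su-states; as $m^{+}$ is already a u-state and $m^{-}$ an s-state, I only need to show that $m^{+}$ is an s-state and that $m^{-}$ is a u-state.

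First I would record that $g\mapsto\lambda_{+}(F_{g})$ is upper semicontinuous on $\mathit{Diff}^{r}_{*}(M)$: by subadditivity $\lambda_{+}(F_{g})=\inf_{n}\tfrac{1}{n}\int\log\|(F_{g})^{n}_{x}\|\,d\mu$, and each term is continuous in $g$ because the center bundle, the derivative and the normalizing factor $\eta_{g}$ vary continuously with $g$; an infimum of continuous functions is upper semicontinuous. Since $f$ is a discontinuity point and $\int\log\eta_{g}\,d\mu$ is continuous in $g$ (Equation (\ref{lyap0})), there is a sequence $f_{k}\to f$ in $\mathit{Diff}^{r}_{*}(M)$ with $\lambda_{+}(F_{f_{k}})\not\to\lambda$; using upper semicontinuity together with $\lambda_{+}(F_{f_{k}})\geq 0$ I pass to a subsequence with $f_{k}\in\mathcal{B}^{r}_{*}(M)$ (the set being $C^{1}$-open) and $\lambda_{+}(F_{f_{k}})\to L$ for some $L\in[0,\lambda)$.

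Next, for each $k$ I would choose a $\mathbb{P}(F_{f_{k}})$-invariant u-state $m_{k}^{+}$ and s-state $m_{k}^{-}$, both projecting to $\mu$, with $\int\Phi_{f_{k}}\,dm_{k}^{\pm}=\lambda_{\pm}(F_{f_{k}})$: the measures of Proposition \ref{nozero} (which realize these values, by Lemma \ref{fust} and its analogue for $\lambda_{-}$) when $\lambda_{+}(F_{f_{k}})>0$, and, when $\lambda_{+}(F_{f_{k}})=0$, any invariant measure, which by Corollary \ref{zeroF} is both an s-state and a u-state and has $\int\Phi_{f_{k}}\,d(\cdot)=0$. Viewing all of these as probability measures on the compact space $\mathbb{P}(TM)$, I pass to a further subsequence so that $m_{k}^{+}\to\nu$ and $m_{k}^{-}\to\nu'$ weak$^{*}$. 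Since the center bundles $\mathbb{P}(E^{c}(f_{k}))$ converge to $\mathbb{P}(E^{c}(f))$ in the Hausdorff metric and $\mathbb{P}(F_{f_{k}})$, $\Phi_{f_{k}}$ extend to continuous objects on $\mathbb{P}(TM)$ (via $\mathbb{P}(Df_{k})$) converging uniformly as $f_{k}\to f$, the limits $\nu,\nu'$ are supported on $\mathbb{P}(E^{c}(f))$, project to $\mu$, are $\mathbb{P}(F)$-invariant, and satisfy $\int\Phi\,d\nu=L$, $\int\Phi\,d\nu'=-L$. By Proposition \ref{nozero} applied to $f$, $\nu=t\,m^{+}+(1-t)\,m^{-}$ and $\nu'=t'\,m^{+}+(1-t')\,m^{-}$, and comparing the integrals of $\Phi$ gives $t=\tfrac{\lambda+L}{2\lambda}\in[\tfrac12,1)$ and $t'=\tfrac{\lambda-L}{2\lambda}\in(0,\tfrac12]$; in particular $t,1-t,t',1-t'$ are all strictly positive.

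The hard part, which I expect to be the main obstacle, is to show that $\nu$ is again a u-state and $\nu'$ an s-state for $\mathbb{P}(F)$; this is exactly where the $\alpha$-pinched, $\alpha$-bunched hypotheses enter, because there the invariant holonomies are linear isomorphisms depending continuously on the diffeomorphism, so the essential holonomy-invariance of the disintegrations of the $m_{k}^{\pm}$ survives the weak$^{*}$ limit — this is the extension, promised before Proposition \ref{nozero}, of the classical fixed-base arguments of Section 6 of \cite{AV1} to cocycles over a varying base. Granting it, the disintegration of $\nu$ equals $x\mapsto t\,\delta_{\mathbb{P}(E^{+}_{x})}+(1-t)\,\delta_{\mathbb{P}(E^{-}_{x})}$ (two distinct atoms, since $E^{+}_{x}\neq E^{-}_{x}$), and essential u-invariance reads $(h^{u}_{x,y})_{*}\bigl(t\,\delta_{\mathbb{P}(E^{+}_{x})}+(1-t)\,\delta_{\mathbb{P}(E^{-}_{x})}\bigr)=t\,\delta_{\mathbb{P}(E^{+}_{y})}+(1-t)\,\delta_{\mathbb{P}(E^{-}_{y})}$ for $x,y$ in a full-measure set on the same strong-unstable leaf. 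Since $m^{+}$ is a u-state we already know $h^{u}_{x,y}(\mathbb{P}(E^{+}_{x}))=\mathbb{P}(E^{+}_{y})$ there, so, $1-t$ being positive, also $h^{u}_{x,y}(\mathbb{P}(E^{-}_{x}))=\mathbb{P}(E^{-}_{y})$, i.e. $m^{-}$ is a u-state. Symmetrically, from $\nu'$, its essential s-invariance together with $m^{-}$ being an s-state and $t'>0$ forces $h^{s}_{x,y}(\mathbb{P}(E^{+}_{x}))=\mathbb{P}(E^{+}_{y})$ along strong-stable leaves, so $m^{+}$ is an s-state. Hence $m^{+}$ and $m^{-}$, and therefore every $\mathbb{P}(F)$-invariant measure projecting to $\mu$, are su-states.
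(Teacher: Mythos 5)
Your proposal follows essentially the same route as the paper: take a sequence $f_k\to f$ along which $\lambda_{+}(F_{f_k})\to L<\lambda_{+}(F)$ (upper semicontinuity), pass to a weak$^{*}$ limit of u-states (respectively s-states) for the cocycles over the $f_k$, express the limit as a nontrivial convex combination of $m^{+}$ and $m^{-}$ using Proposition \ref{nozero} and Lemma \ref{fust}, and read off that $m^{-}$ is a u-state (respectively $m^{+}$ an s-state). The step you flag as the hard part --- that the weak$^{*}$ limit of u-states over a varying base is again a u-state --- is exactly what the paper asserts as property (d) of the limit measure $m$ without spelling out the details (relying on the holonomy-continuity results of Section 5), so your acknowledged gap coincides with the paper's implicit one rather than being a defect of your argument.
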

\begin{proof}
By the hypotheses, there exists a sequence $f_k\to f$ in $\mathit{Diff}_{*}^r(M)$ such that $\lambda^c_i(f_k)$ does not converges to $\lambda^c_i(f)$ for some $i\in \{1,2\}$. 

Since the functions $f\mapsto \lambda^c_1(f)$ and $f\mapsto \lambda^c_2(f)$ are upper semi-continuous and lower semi-continuous, respectively, the discontinuity of $\lambda^c_{i}(f)$ for some $i\in \{1,2\}$ implies that $\lambda^c_1(f)\neq \lambda^c_2(f)$. Therefore, by Equation (\ref{lyap0}), $\lambda_{+}(F)\neq \lambda_{-}(F)$ and since $\lambda_{+}(F)+\lambda_{-}(F)=0$, we have $\lambda_{+}(F)> 0 > \lambda_{-}(F)$. 

Let $m^{+}$ and $m^{-}$ be given by Proposition \ref{nozero}.

Consider now the cocycle $F_k=\eta_k \cdot Df_k\vert E^c(f_k)$ associated to $f_k$ and let $m_k$ be an ergodic probability measure for $\mathbb{P}(F_k)$ which realizes the maximum in Lemma \ref{fust}. Then, $\lambda_{+}(F_k)=\int{ \Phi_k(x,v)\, d\, m_k}$ and $\lambda_{+}(F_k)$ does not converges to $\lambda{+}(F)$. Moreover, by similarity of the Lyapunov exponents, $\lambda_{-}(F_k)$ does not converges to $\lambda_{-}(F)$.

Observe that $m_k$ is a $u$-state for every $k\in \mathbb{N}$. Moreover, there exist a subsequence $k_j$ and a measure $m$ in $\mathit{\mathbb{P}(TM)}$ such that $m_{k_j}\rightarrow m$ in the weak$^*$ topology. The limit measure $m$ has the following properties: 
\begin{enumerate}[label=(\alph*)]
\item $\mathit{supp}\; m \subset \mathit{\mathbb{P}(E^c(f))}$,
\item $m$ projects down to $\mu$,
\item $m$ is $\mathbb{P}(F)$-invariant,
\item $m$ is a $u$-state.
\end{enumerate}


Moreover, since $f_k\to f$ we have $\int{ \Phi_{k_j}(x,v)\, d\, m_{k_j}}\to \int{ \Phi(x,v)\, d\, m}$. On the other hand, since $\lambda_{+}(F_k)$ does not converges to $\lambda_{+}(F)$, $$\lim \limits_{k_j} \int{ \Phi_{k_j}(x,v)\, d\, m_{k_j}} < \lambda_{+}(F).$$ 

These properties allow us to conclude that $m$ is an $\mathbb{P}(F)$-invariant probability measure projecting down to $\mu$ which is a $u$-state and it is different from $m^{+}$. Therefore, by Proposition \ref{nozero}, there exists $t\neq 1$ such that $m= t\, m^{+} + (1-t)\, m^{-}$. Now, we can write $m^{-}=\frac{m- t\, m^{+}}{(1-t)}$. Moreover, we know that $m^{+}$ is a u-state and $m^{-}$ an s-state. This implies, $m^{-}$ is an $su$-state. 

Using an analogous result of Lemma \ref{fust} for $\lambda_{-}(F)$ and repeating the argument, we conclude that $m^{+}$ is also an $su$-state. 
\end{proof}

This proposition together with Theorem \ref{invariant} imply the following corollary which contains Corollary 2.

\begin{corol} Let $r\geq 2$ and $*\in \{\mu, \omega\}$. Suppose $f\in \mathcal{B}^r_{*}(M)$, $F=\eta\cdot Df\vert E^c$ and one of the following is satisfied: 
\begin{enumerate}[label=\emph{(\alph*)}]
\item There exists $p\in Per(f)$ with $per(p)=n_p$ such that $\mathbb{P}(F^{n_p}_p)\colon \mathit{\mathbb{P}(E^c(p))}\to \mathit{\mathbb{P}(E^c(p))}$ has no fixed points, or 
\item There exist $x\in M$ and an $su$-path $\gamma$ from $x$ to itself such that the holonomy for $\mathbb{P}(F)$ defined by $\gamma$, $h\colon \mathit{\mathbb{P}(E^c(x))}\to \mathit{\mathbb{P}(E^c(x))}$, has no fixed points.
\end{enumerate}
Then, $f$ is a $C^r$ continuity point for the center Lyapunov exponents. 
\end{corol}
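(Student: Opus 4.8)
The plan is to argue by contradiction. Suppose $f\in\mathcal{B}^r_*(M)$ satisfies (a) or (b) but is a $C^r$ \emph{discontinuity} point for the center Lyapunov exponents. Exactly as at the start of the proof of Proposition~\ref{discont}, the semicontinuity of $f\mapsto\lambda^c_1(f)$ and $f\mapsto\lambda^c_2(f)$ together with $\lambda^c_1(f)+\lambda^c_2(f)=0$ forces $\lambda^c_1(f)\neq\lambda^c_2(f)$, hence $\lambda_+(F)>0>\lambda_-(F)$. Proposition~\ref{nozero} then produces the $\mathbb{P}(F)$-invariant probability measure $m^+=\int\delta_{\mathbb{P}(E^+_x)}\,d\mu$, where $E^c(x)=E^+_x\oplus E^-_x$ is the Oseledets splitting, and Proposition~\ref{discont} tells us that $m^+$ is an $su$-state.

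Next I would feed $m^+$ into Theorem~\ref{invariant}. Since every $f\in\mathcal{B}^r_*(M)$ is $C^2$, center bunched (Remark~\ref{ergod}) and accessible, and $\mathbb{P}(F)$ admits invariant stable and unstable holonomies $h^s,h^u$ (because $f$ is $\alpha$-pinched and $\alpha$-bunched), the measure $m^+$ admits a disintegration $\{m^+_x:x\in M\}$ that is bi-invariant, i.e. $(h^s_{x,y})_*m^+_x=m^+_y$ for $y\in W^s(x)$ and $(h^u_{x,y})_*m^+_x=m^+_y$ for $y\in W^u(x)$, and which depends continuously on $x$ in the weak$^*$ topology; by Remark~\ref{finv} it also satisfies $(\mathbb{P}(F)_x)_*m^+_x=m^+_{f(x)}$ for every $x$. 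The key point is to upgrade this to an honest continuous \emph{section}: by essential uniqueness of disintegrations this family equals $\delta_{\mathbb{P}(E^+_x)}$ on a set of full $\mu$-measure, which is dense since $\mu$ has full support; because the set of Dirac measures is weak$^*$-closed and $x\mapsto m^+_x$ is continuous, we conclude that $m^+_x=\delta_{a_x}$ for \emph{every} $x\in M$, with $x\mapsto a_x$ a continuous section of $\mathbb{P}(E^c)$ coinciding with $\mathbb{P}(E^+_x)$ almost everywhere. The three invariance relations above now read $h^s_{x,y}(a_x)=a_y$, $h^u_{x,y}(a_x)=a_y$ and $\mathbb{P}(F)_x(a_x)=a_{f(x)}$, i.e. the section $a$ is invariant under $\mathbb{P}(F)$ and under all stable and unstable holonomies of $\mathbb{P}(F)$.

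It then remains to contradict (a) and (b). In case (a), iterating $\mathbb{P}(F)_x(a_x)=a_{f(x)}$ around the orbit of the periodic point $p$ gives $\mathbb{P}(F^{n_p}_p)(a_p)=a_{f^{n_p}(p)}=a_p$, so $a_p$ is a fixed point of $\mathbb{P}(F^{n_p}_p)$, contrary to the hypothesis. In case (b), since $a$ is invariant under every stable and unstable holonomy it is invariant under the holonomy $h$ of $\mathbb{P}(F)$ along the $su$-loop $\gamma$ based at $x$ (a composition of such holonomies), so $h(a_x)=a_x$, again a contradiction. Hence $f$ is a $C^r$ continuity point. Finally, Corollary~2 is the special case of (a) with $p$ quasi-elliptic: the two eigenvalues of $Df^{n_p}_p|E^c(p)$ are then a non-real complex-conjugate pair of modulus one — the remaining eigenvalues of $Df^{n_p}_p$, of norm different from one, account for $E^s_p\oplus E^u_p$ — so after normalization $\mathbb{P}(F^{n_p}_p)$ acts on the circle $\mathbb{P}(E^c(p))$ as the projectivization of a genuine rotation, which has no fixed point. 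The only mildly delicate step is this passage from an almost-everywhere Dirac disintegration to an everywhere-defined continuous section; everything else is bookkeeping with the conclusions of Propositions~\ref{nozero} and~\ref{discont} and Theorem~\ref{invariant}.
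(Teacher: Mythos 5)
Your argument is correct and is exactly the intended one: the paper dispatches this corollary in one line ("This proposition together with Theorem \ref{invariant} imply\dots"), and you have filled in the natural details, including the only subtle point, namely passing from the almost-everywhere identity $m^+_x=\delta_{\mathbb{P}(E^+_x)}$ to an everywhere-defined continuous invariant section via full support of $\mu$, weak$^*$-closedness of the Dirac measures and continuity of $x\mapsto m^+_x$.
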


It is easy to see that Propositions \ref{nozero} and \ref{discont} are also valid in the context of Section 8 of \cite{ASV}. Therefore, the next corollary follows: 

\begin{corol} Let $f\colon M\to M$ be a $C^2$ partially hyperbolic, volume-preserving, center bunched, accessible diffeomorphism and $\mu$ an invariant probability measure in the Lebesgue class.
If $\mathit{\mathcal{G}^{r,\alpha}(V,f)}$ denotes the set of $C^{r,\alpha}$ fiber bunched linear cocycles $F\colon V\to V$ over $f$ with fiber modeled by $\mathbb{R}^2$, then there exists an open and dense subset $\mathcal{U}\subset \mathit{\mathcal{G}^{r,\alpha}(V,f)}$ such that every $F\in \mathcal{U}$ is a continuity point for $F\to \lambda_{\pm}(F)$.
\end{corol}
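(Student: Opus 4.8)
The plan is to run the argument behind Theorem B with the base diffeomorphism $f$ held fixed. Holding $f$ fixed removes the only delicate point of that proof — comparing sections of $\mathbb{P}(E^c(f))$ and $\mathbb{P}(E^c(g))$ inside $\mathbb{P}(TM)$ for a perturbed base $g$ — and turns the statement into a pure linear cocycle problem over a fixed accessible base, so the perturbative part of the proof of Theorem B should apply essentially verbatim. After replacing each cocycle $F$ by $|\det F|^{-1/2}F$, which changes neither $\mathbb{P}(F)$ nor its stable and unstable holonomies and only shifts $\lambda_{\pm}$ by a continuous amount, we may assume $\lambda_+(F)+\lambda_-(F)=0$. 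I would first record two soft observations. Since $n\mapsto\int\log\|F^n_x\|\,d\mu$ is subadditive, $F\mapsto\lambda_+(F)$ is upper semicontinuous and $F\mapsto\lambda_-(F)=-\lambda_+(F)$ lower semicontinuous; as $\lambda_-\le\lambda_+$ always, every $F$ with $\lambda_+(F)=\lambda_-(F)$ is automatically a continuity point of $\lambda_\pm$. Moreover, every $F$ admitting a dominated splitting $V=V^+\oplus V^-$ is a continuity point, because $V^+$ is one-dimensional, robust, depends continuously on the cocycle, and $\lambda_+(F)=\int\log\|F_x|_{V^+_x}\|\,d\mu$ by ergodicity of $f$. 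Let $\mathcal{H}\subset\mathcal{G}^{r,\alpha}(V,f)$ denote the (open) set of cocycles carrying a dominated splitting.

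Next I would characterise the discontinuity points. If $F$ is a discontinuity point then $\lambda_+(F)>\lambda_-(F)$ by the semicontinuity above, and by the versions of Propositions \ref{nozero} and \ref{discont} valid in the setting of Section 8 of \cite{ASV}, every $\mathbb{P}(F)$-invariant probability projecting to $\mu$ is an $su$-state. Applying Theorem \ref{invariant} — here accessibility of $f$ is used — to the measures $m^{+}=\int\delta_{\mathbb{P}(E^+_x)}\,d\mu$ and $m^{-}=\int\delta_{\mathbb{P}(E^-_x)}\,d\mu$ of Proposition \ref{nozero} yields continuous bi-invariant disintegrations; by essential uniqueness of the disintegration, full support of $\mu$, and $\dim V=2$, these disintegrations are Dirac at every point, so the measurable Oseledets line fields $E^{\pm}$ extend to continuous sections $a,b\colon M\to\mathbb{P}(V)$ that are invariant under $\mathbb{P}(F)$ and under the holonomies $h^s,h^u$. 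It follows that for every periodic point $p$ the return map $\mathbb{P}(F^{n_p}_p)$ fixes $a_p$ and $b_p$, and for every $su$-loop $\gamma$ at any $x$ the holonomy $h^F_\gamma\colon\mathbb{P}(V_x)\to\mathbb{P}(V_x)$ fixes $a_x$ and $b_x$ — precisely the obstruction ruled out by the corollary following Proposition \ref{discont}.

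I would then set $\mathcal{U}$ to be the union of $\mathcal{H}$ with the sets of cocycles $F$ having either (i) a periodic point $p$ with $\mathbb{P}(F^{n_p}_p)$ elliptic, hence fixed-point free; (ii) a hyperbolic periodic point $p$ and an $su$-loop $\gamma$ at $p$ whose holonomy $h^F_\gamma$ fixes neither eigendirection of $F^{n_p}_p$; or (iii) an $su$-loop whose holonomy is fixed-point free. Each of these four families is $C^{r,\alpha}$-open: dominated splittings, hyperbolic periodic orbits and $su$-loops persist under $C^1$ perturbation, ellipticity and absence of fixed points are $C^0$-open conditions on circle diffeomorphisms, and both $\mathbb{P}(F^{n_p}_p)$ and $h^F_\gamma$ depend continuously on $F$ (the latter because the invariant holonomies of a fiber-bunched cocycle vary continuously with it, Section 3 of \cite{ASV}). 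Hence $\mathcal{U}$ is open; and by the analysis of the discontinuity points above (equivalently, by the corollary following Proposition \ref{discont}), together with paragraph one for $\mathcal{H}$, every $F\in\mathcal{U}$ is a continuity point of $\lambda_\pm$ — a continuous invariant section would restrict at a periodic $p$ to a common fixed point of $\mathbb{P}(F^{n_p}_p)$ and of the holonomy of every $su$-loop at $p$, which (i), (ii), (iii) forbid.

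It remains to prove that $\mathcal{U}$ is dense, which is the only non-formal step and the part I expect to be the main obstacle. Given $F_0\notin\mathcal{U}$ and $\varepsilon>0$, I would pick a periodic point $p$ of $f$ (if $f$ has no periodic point, the argument runs entirely through $su$-cycles, using only accessibility, exactly as in \cite{M}). If $\mathbb{P}((F_0)^{n_p}_p)$ is elliptic then $F_0\in\mathcal{U}$, a contradiction; otherwise an arbitrarily small perturbation supported in a box around $p$ disjoint from the rest of $\mathrm{orb}(p)$ makes $(F_0)^{n_p}_p$ hyperbolic with eigendirections $v_1,v_2$, and a second arbitrarily small perturbation supported in a box meeting an interior leg of some $su$-loop $\gamma$ at $p$ and disjoint from $\mathrm{orb}(p)$ composes $h^F_\gamma$ with a small rotation of the fibre so that it fixes neither $v_1$ nor $v_2$; the resulting cocycle $F_1$ lies in family (ii) with $\|F_1-F_0\|_{r,\alpha}<\varepsilon$. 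The substantial point — that such local modifications of the cocycle realise the prescribed small changes of the return matrix and of the loop holonomy in the $C^{r,\alpha}$ fiber-bunched topology, and that $\gamma$ can be chosen with a leg long enough to host the box — is exactly the content of the perturbation lemmas of \cite{M}, which carry over here unchanged since the base dynamics and its $su$-structure are untouched. Everything else is soft; in particular, compared with Theorem B, the proof is shorter because no section over a perturbed base has to be controlled.
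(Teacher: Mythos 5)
Your overall strategy — normalize so that $\lambda_{+}+\lambda_{-}=0$, use semicontinuity to reduce to $\lambda_{+}>\lambda_{-}$, invoke the cocycle versions of Propositions \ref{nozero} and \ref{discont} together with Theorem \ref{invariant} to turn discontinuity into the existence of continuous $\mathbb{P}(F)$- and holonomy-invariant sections $a,b$, and then run a perturbation argument at a pivot node of an $su$-loop to break that rigidity on an open set — is the same mechanism the paper has in mind. The paper itself states this corollary without a written proof, leaning on the phrase ``it is easy to see that Propositions \ref{nozero} and \ref{discont} are also valid in the context of Section 8 of \cite{ASV}'' plus the machinery of Sections 5--7 and \cite{M}; you have correctly identified that the only non-formal step is density and that the perturbation lemmas of \cite{M} are what has to be imported, simplified by the fact that the base is fixed.

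However, there is a genuine gap in the density step exactly where you flag the least concern. You write that if $f$ has no periodic point ``the argument runs entirely through $su$-cycles, using only accessibility, exactly as in \cite{M}'' — but this is not what \cite{M} does. The argument in \cite{M}, and likewise the proof of Theorem \ref{interior} in this paper, is anchored at a pinching hyperbolic periodic point $p$: the perturbation is supported away from $\mathrm{orb}(p)$, so that after perturbing the putative invariant section is still pinned at $p$ to the unstable eigendirection of the (unchanged) return map $F^{n_p}_p$, and it is this anchoring that lets one propagate the section along the $su$-loop and detect the inserted rotation (see Equations (\ref{cerca})--(\ref{support}) in the proof of Theorem \ref{interior}). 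This is precisely why Corollary 1 of the paper carries the hypothesis ``if the set of periodic points of $f$ is non-empty'' and why the volume-preserving results are stated on $\mathcal{P}^r_{\mu}(M)$, which postulates a pinching hyperbolic periodic point. Without a periodic point on the base, you do not have an anchor: perturbing the holonomy of one $su$-loop does not tell you where the new invariant section sits at the base point, because the Oseledets sections $a_{F_1},b_{F_1}$ are only a priori characterized as supports of the extremal $u$-/$s$-states and there is no soft reason they stay close to $a_{F_0},b_{F_0}$. To close this you would need either to restrict the corollary to bases with periodic points (matching what the paper actually proves for diffeomorphisms), or to supply a genuinely new anchoring device — for instance, a second $su$-loop whose holonomy plays the role of $F^{n_p}_p$, together with a two-parameter perturbation making the two loop holonomies have no common fixed point, with all the recurrence and exponential-decay estimates of Lemma \ref{pert} and Proposition \ref{pert-holonomy} redone for that configuration. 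As stated, the claim that this case is handled ``exactly as in \cite{M}'' is wrong; it is the one substantive thing the reconstruction still owes.

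A smaller remark: in your openness argument, condition (ii) also requires the $su$-loop $\gamma$ to persist, which for the cocycle problem is automatic since the base $f$ and hence its $su$-structure are fixed — so indeed the only continuity you need is of $h^F_\gamma$ and of the eigendirections of $F^{n_p}_p$ in $F$, and both are supplied by Section 3 of \cite{ASV}. That part of your argument is fine and matches the paper's use of Proposition \ref{continuity-su} and Corollaries \ref{hol a cont}--\ref{bound-sequence}, suitably trivialized since the base does not move.
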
 

\section {Accessibility and continuity of holonomies}

In this section we are going to state some theorems about accessibility which already appear in \cite{AV2, M}. These results will allow us to obtain estimations for the variation of the holonomies associated to the center derivative cocycle of $f$ when we perturb the diffeomorphism.

We recall the following definitions: Given two points $x,y\in M$, $x$ is \textit{accessible} from $y$ if there exists a path that connects $x$ to $y$, which is a concatenation of finitely many subpaths, each of which lies entirely in a single leaf of $W^u$ or a single leaf of $W^s$. We call this type of path, an \textit{su-path}. This defines an equivalence relation and we say that $f$ is \textit{accessible} if $M$ is the unique accessibility class.

Given $\gamma$ an $su$-path, there exist finitely many points $z_i$ which are defined by the extremal points of the finitely many subpaths that compose the $su$-path. That is $z_i\in W^{*}(z_{i+1})$ for every $i\in \{0,...,n-1\}$ and $*\in \{s,u\}$. The points $z_i$ are called the \emph{nodes} of the $su$-path. We are going to use the following notation: $\gamma=[z_0,z_1,...,z_n]$.

If the partially hyperbolic diffeomorphism $f$ has 2-dimensional center bundle, then we can apply the results in \cite{AV2} in order to have the following proposition. See also the proof of Corollary 5.8 in \cite{M} for more details about this result and its proof. 

\begin{prop}\label{uniform} Let $f$ be a $C^1$ partially hyperbolic accessible diffeomorphism with 2-dimensional center bundle. Then, there exist $N\in \mathbb{N}$ and a neighborhood of $f$ in the $C^1$ topology, $\mathcal{V}(f)$, such that for any $x,y\in M$ and $g\in \mathcal{V}(f)$ there exists an $su$-path for $g$ joining $x$ to $y$ with at most $N$ nodes and the distance between the nodes bounded by $N$. 
\end{prop}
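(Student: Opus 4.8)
The plan is to deduce the statement from the quantitative, $C^1$-robust form of accessibility for $2$-dimensional center bundle established in \cite{AV2} (see also Section 5 of \cite{M}), combined with an elementary chaining argument on the compact connected manifold $M$.

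\textbf{Step 1 (uniform local accessibility).} The first step is to establish the following engulfing property: there exist $N_1\in\mathbb{N}$, $\epsilon_0>0$ and a $C^1$-neighborhood $\mathcal{V}_1(f)$ of $f$ such that for every $g\in\mathcal{V}_1(f)$ and every $x\in M$, the set of endpoints of $g$-$su$-paths issuing from $x$ with at most $N_1$ nodes and with consecutive nodes at distance at most $N_1$ contains the ball $B(x,\epsilon_0)$. This is the point where $\dim E^c=2$ is used: following \cite{AV2}, one builds from the local strong-stable and strong-unstable leaves a finite family of $su$-holonomy maps whose composition along a suitable combinatorial pattern of bounded length is, by the relevant dimension count available when the center is two-dimensional, a map onto a neighborhood of $x$ of uniform size. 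Because the local strong foliations depend continuously on the diffeomorphism in the $C^1$ topology, and ``being open with image containing a ball of radius $\epsilon_0$'' is a $C^1$-open condition, the combinatorial pattern and the constants persist on a whole neighborhood $\mathcal{V}_1(f)$; a standard compactness argument on $M$ lets us take $N_1$ and $\epsilon_0$ independent of $x$.

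\textbf{Step 2 (chaining).} Since $M$ is compact and connected, any two points $x,y\in M$ can be joined by a finite sequence $x=p_0,p_1,\dots,p_\ell=y$ with $\ell\le K:=\lceil \mathrm{diam}(M)/\epsilon_0\rceil+1$ and $\mathrm{dist}(p_i,p_{i+1})<\epsilon_0$. By Step 1, for each $i$ and each $g\in\mathcal{V}_1(f)$ there is a $g$-$su$-path from $p_i$ to $p_{i+1}$ with at most $N_1$ nodes and consecutive nodes at distance at most $N_1$. Concatenating these $\ell$ pieces produces a $g$-$su$-path from $x$ to $y$: it has at most $KN_1$ nodes, and at the junctions the last node of one piece and the first node of the next (both lying in the same local leaf through $p_i$ or merged into a single leg) are at distance at most $2N_1$, so every pair of consecutive nodes is at distance at most $2N_1$. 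Setting $N:=\max\{KN_1,\,2N_1\}$ and $\mathcal{V}(f):=\mathcal{V}_1(f)$ finishes the argument.

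\textbf{Main obstacle.} The whole substance lies in Step 1: converting the purely qualitative hypothesis ``$f$ is accessible'' into a quantitative, $C^1$-stable statement with a \emph{uniform} bound on the number and length of the legs. This is exactly the content of the accessibility results of \cite{AV2} in the case $\dim E^c = 2$, whose proof relies on the dimension count that makes the relevant $su$-holonomy compositions open maps together with the $C^1$-continuity of the local strong foliations, so that both the pattern realizing local accessibility and the size $\epsilon_0$ of the engulfed ball survive perturbation. Granting that input (as recalled in Section 5 of \cite{M}), everything else is the routine chaining of Step 2.
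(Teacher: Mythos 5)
Your proposal is consistent with the paper's own treatment: the paper does not supply a self-contained argument for Proposition \ref{uniform} but defers entirely to \cite{AV2} (and to the proof of Corollary 5.8 in \cite{M}) for the stable, quantitative form of local accessibility in the 2-dimensional-center case. Your Step~1 is exactly that input, granted as a black box, and your Step~2 (chaining on the compact connected manifold, merging adjacent same-type legs and tracking the bound $\max\{KN_1,2N_1\}$) is the standard reduction that the cited references carry out; so the approach is essentially the same as the paper's.
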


The next results give two refinements of the above proposition. We consider a sequence $f_k\to f$ in the $C^1$ topology and obtain some kind of continuity for $su$-paths under the variation of the diffeomorphism. 

The first one is a simple consequence of the fact that $W^s(x,f)$ and $W^u(x,f)$ vary continuously with the point $x$ and the diffeomorphism $f$.

\begin{prop}\label{a cont}
Let $f$ be a $C^1$ partially hyperbolic accessible diffeomorphism with 2-dimensional center bundle. For every $\epsilon>0$, every $x\in M$, $x_k\to x$ and every sequence $f_k\to f$ in the $C^1$ topology, there exists $K\in \mathbb{N}$ such that for every $k\geq K$ and every $su$-path for $f_k$ given by Proposition \ref{uniform}, $\gamma_k=[z_0(f_k),...,z_N(f_k)]$, with $z_0(f_k)=x_k$, there exists an $su$-path for $f$, $\gamma=[z_0,...,z_N]$ with $z_0=x$ such that for every $i\in \{0,...,N\}$
$$dist (z_i, z_i(f_k))< \epsilon.$$
\end{prop}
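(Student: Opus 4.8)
The plan is to build the limit $su$-path $\gamma$ node by node, using that each subpath of $\gamma_k$ lies in a single strong-stable or strong-unstable leaf and that these leaves vary continuously with both the base point and the diffeomorphism in the $C^1$ topology. First I would fix $\epsilon>0$, $x\in M$, $x_k\to x$ and $f_k\to f$, and let $N$ be the number from Proposition \ref{uniform}; by shrinking attention to $k$ large we may assume all the $\gamma_k$ have exactly $N$ nodes (pad with repeated nodes if fewer) with consecutive nodes at distance at most $N$, so the whole path stays in a fixed compact region. For each $k$, record the combinatorial type of $\gamma_k$, i.e. the word $(*_1,\dots,*_N)\in\{s,u\}^N$ telling whether the $i$-th leg is a strong-stable or strong-unstable segment. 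Since there are only finitely many such words, after passing to a subsequence I may assume the type is constant, equal to some $(*_1,\dots,*_N)$.

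Next I would pass to a further subsequence so that each node converges: $z_i(f_k)\to \bar z_i$ for $i=0,\dots,N$, which is possible by compactness, with $\bar z_0=x$ since $z_0(f_k)=x_k\to x$. The key point is then that $\bar z_{i+1}\in W^{*_{i+1}}(\bar z_i,f)$ for each $i$. Indeed $z_{i+1}(f_k)\in W^{*_{i+1}}(z_i(f_k),f_k)$, the leaf segments $W^{*}_{R}(\,\cdot\,,g)$ of a fixed radius $R=N$ depend continuously (in the $C^0$ topology on compact pieces of leaves) on the base point and on $g$ in the $C^1$ topology — this is the standard structural stability of the strong foliations for partially hyperbolic diffeomorphisms — so the limit of a sequence of points in $W^{*_{i+1}}_{N}(z_i(f_k),f_k)$ lies in $W^{*_{i+1}}_{N}(\bar z_i,f)$. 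Therefore $\gamma:=[\bar z_0,\dots,\bar z_N]$ is a genuine $su$-path for $f$ with $z_0=x$, and by definition of convergence of the nodes there is $K$ so that for all $k\ge K$ one has $\mathrm{dist}(\bar z_i, z_i(f_k))<\epsilon$ for every $i$.

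Finally I would address the fact that the statement quantifies over \emph{every} $su$-path $\gamma_k$ produced by Proposition \ref{uniform}, not just a convergent subsequence. Here I would argue by contradiction in the usual way: if the conclusion failed, there would be $\epsilon>0$ and a sequence of paths $\gamma_{k}$ (along some subsequence of the $f_k$) for which no admissible $f$-path $\gamma$ is $\epsilon$-close at all nodes; but the construction above, applied to that subsequence, produces exactly such a $\gamma$ for infinitely many $k$, a contradiction. The main obstacle — really the only non-formal point — is making precise and invoking the continuous dependence of the local strong leaves $W^s_{loc}$, $W^u_{loc}$ on the point and on the diffeomorphism in the $C^1$ topology; once that is granted, together with the uniform bounds on node number and node spacing from Proposition \ref{uniform} (which keep everything in a compact set and make the finitely-many-combinatorial-types reduction legitimate), the rest is a routine compactness-and-diagonal argument.
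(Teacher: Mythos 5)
Your proof is correct and follows the route the paper intends: the paper gives no formal argument for this proposition, stating only that it is ``a simple consequence of the fact that $W^s(x,f)$ and $W^u(x,f)$ vary continuously with the point $x$ and the diffeomorphism $f$.'' Your node-by-node compactness argument, using the uniform bound on node number and spacing from Proposition~\ref{uniform} to reduce to finitely many combinatorial types and to convergent node sequences, then invoking continuous dependence of the local strong leaves on the point and on the diffeomorphism, and finally wrapping the subsequence construction in a contradiction to recover the stated quantifiers, is precisely the expected unpacking of that remark.
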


Notice that in the proposition above the only information that we have about the final node of $\gamma$ is that it is $\epsilon$-close to the final node of $\gamma_k$. The next proposition deals with the case where we need to fix the initial and final points. In this case we also obtain some continuity of the $su$-path but we need to consider a subsequence of $f_k$. More precisely, 

\begin{prop}[Proposition 5.7 and Corollary 5.8 in \cite{M}]\label{continuity-su}
Let $f$ be a $C^1$ partially hyperbolic accessible diffeomorphism with 2-dimensional center bundle. Then, for every $x,y\in M$, $x_k\to x$, $y_k\to y$ and every sequence $f_k\to f$ in the $C^1$ topology, there exist a subsequence $k_j$, $su$-paths for $f_{k_j}$ denoted by $\gamma_{k_j}$ and an $su$-path for $f$ denoted by $\gamma$ satisfying the following: 
\begin{enumerate}[label=\emph{(\alph*)}] 
\item $\gamma_{k_j}=[z_0(f_{k_j}), ... ,z_n(f_{k_j})]$ joins $x_{k_j}$ to $y_{k_j}$,
\item $\gamma=[z_0, ..., z_n]$ joins $x$ to $y$ and 
\item for every $\epsilon>0$ there exists $K\in \mathbb{N}$ such that for every $k_j\geq K$, $$dist (z_i, z_i(f_{k_j}))< \epsilon$$ for every $i\in \{0,...,n\}$.
\end{enumerate}
Moreover, if $N$ is given by Proposition \ref{uniform}, then the $su$-paths in \emph{(a)} and \emph{(b)} have at most $N$ nodes and the distance between the nodes of each $su$-path is bounded by $N$. 
\end{prop}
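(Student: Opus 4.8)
The idea is to reduce everything to Proposition \ref{uniform} together with a compactness (diagonal) argument, exploiting the fact that stable and unstable leaves of a partially hyperbolic diffeomorphism depend continuously on the base point and on the dynamics in the $C^1$ topology. First I would fix $x,y\in M$, sequences $x_k\to x$, $y_k\to y$ and $f_k\to f$ in $C^1$. By Proposition \ref{uniform} there is $N\in\mathbb N$ and a $C^1$-neighborhood $\mathcal V(f)$ so that, for all $k$ large enough that $f_k\in\mathcal V(f)$, there is an $su$-path $\gamma_k$ for $f_k$ joining $x_k$ to $y_k$ with at most $N$ nodes and consecutive nodes at distance at most $N$; write $\gamma_k=[z_0(f_k),\dots,z_{n_k}(f_k)]$ with $n_k\le N$. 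The first step is a cosmetic normalization: by passing to a subsequence we may assume $n_k\equiv n$ is constant (since $n_k\in\{1,\dots,N\}$ takes finitely many values) and that the "type" of each leg (whether $z_i(f_k)\in W^s(z_{i+1}(f_k),f_k)$ or $z_i(f_k)\in W^u(z_{i+1}(f_k),f_k)$) is the same for all $k$ in the subsequence; if a pattern $*\in\{s,u\}$ changes, we again pass to a subsequence along which it is constant — finitely many patterns, so this is fine.

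Next, the nodes all live in the compact manifold $M$, so by a further diagonal extraction we get a subsequence $k_j$ along which $z_i(f_{k_j})\to z_i$ for each $i\in\{0,\dots,n\}$, with $z_0=\lim x_{k_j}=x$ and $z_n=\lim y_{k_j}=y$. The heart of the argument is then to check that $\gamma:=[z_0,\dots,z_n]$ is an $su$-path \emph{for $f$} of the same combinatorial type. This is exactly the continuity-of-leaves input: if $z_i(f_{k_j})\in W^s(z_{i+1}(f_{k_j}),f_{k_j})$ for all $j$, then because the local strong-stable foliation of a partially hyperbolic diffeomorphism varies continuously (uniformly on compact pieces) with the base point and with the map in the $C^1$ topology — and because, by Proposition \ref{uniform}, the relevant pieces of leaf connecting consecutive nodes have length bounded by $N$, hence stay in a compact family — the limit points satisfy $z_i\in W^s(z_{i+1},f)$; symmetrically for the unstable legs. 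Concatenating, $\gamma$ is an $su$-path for $f$ from $x$ to $y$ with at most $N$ nodes, and the node-distance bound $N$ passes to the limit. Finally, item (c) is immediate from $z_i(f_{k_j})\to z_i$: given $\epsilon>0$, choose $K$ so that $\operatorname{dist}(z_i,z_i(f_{k_j}))<\epsilon$ for all $i$ and all $k_j\ge K$.

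The main obstacle — and the only place where real care is needed — is the continuity-of-leaves step in the limit, i.e. justifying that a convergent sequence of bounded-length local stable (resp. unstable) leaf segments $W^s(z_i(f_{k_j}),f_{k_j})$ of a varying diffeomorphism $f_{k_j}$ limits onto a genuine local stable (resp. unstable) segment of $f$. This is standard (it follows, e.g., from the stability of the invariant splitting $E^s\oplus E^c\oplus E^u$ and unique integrability of $E^s,E^u$, as in \cite{HPS}), but one must make sure the leaf segments involved are of uniformly bounded size so that the convergence is on a compact family; this is precisely why Proposition \ref{uniform} is invoked first, and it is why we can only assert the conclusion along a subsequence. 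Everything else — the constancy of $n_k$ and of the leg-types, and the diagonal extraction of the node limits — is routine bookkeeping over finitely many cases. Since this statement is quoted as Proposition 5.7 and Corollary 5.8 of \cite{M}, the cleanest exposition is simply to carry out the reduction above and cite \cite{M} (and \cite{AV2}) for the technical leaf-continuity estimate.
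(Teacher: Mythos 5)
Your proposal is correct and reconstructs essentially the argument one would expect in \cite{M}; the paper itself simply cites Proposition~5.7 and Corollary~5.8 of \cite{M} and observes that the extension from $x_k=x$ to $x_k\to x$ is routine, which is exactly the diagonal/compactness step you carry out. Your identification of the two load-bearing ingredients — the uniform bound on node number and leg length from Proposition~\ref{uniform}, and the $C^1$-continuity of bounded-length stable/unstable leaf segments in both base point and dynamics — matches the paper's framing (compare the remark preceding Proposition~\ref{a cont}), so this is the same route, not a different one.
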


Although the proof of this result in \cite{M} is done for the case of $x_k=x$, it is easy to see that the same proof can be extended to the case we need here. 

Recall that $\omega$ denotes a symplectic form, $\mu$ denotes a probability measure in the Lebesgue class and by Definition \ref{gral} if $r\geq 2$ and $*\in \{\mu, \omega\}$, then $\mathcal{B}^r_{*}(M)$ denotes the subset of $\mathit{PH}^r_{*}(M)$ where $f\in \mathcal{B}^r_{*}(M)$ if $f$ is accessible, $\alpha$-pinched and $\alpha$-bunched for some $\alpha>0$ and its center bundle is 2-dimensional.

Let $f\in \mathcal{B}^r_{*}(M)$ and fix the Riemannian metric given by Equation (\ref{ph}). If $\eta(x)=|\det\, Df_x\vert E^c(x)|^{-1/2}$ for every $x\in M$, we consider the linear cocycle $\mathit{F=\eta\cdot Df\vert E^c}$ over $f$ and denote $\mathbb{P}(F)$ its projectivization. Then, we have invariant stable and unstable holonomies associated to $\mathbb{P}(F)$. More precisely, for every $y\in W^s(x)$ there exists an homeomorphism $h_{x,y}^s\colon\mathit{\mathbb{P}(E^c(x))}\to \mathit{\mathbb{P}(E^c(y))}$ satisfying the properties in Definition \ref{hol}. Analogously, for every $y\in W^u(x)$ we have an homeomorphism $h^u_{x,y}$. 

Given an $su$-path $\gamma=[z_0,z_1,...,z_n]$, we define the holonomy associated to it by $h_{\gamma}=h_{z_{n-1}}\circ ... \circ h_{z_0}$ where $h_{z_i}=h^{s}_{z_{i}, z_{i+1}}$ if $z_i\in W^s(z_{i+1})$ and $h_{z_i}=h^{u}_{z_{i}, z_{i+1}}$ if $z_i\in W^u(z_{i+1})$.

In Proposition 3.4 and Corollary 3.5 of \cite{M} it is proved that there exist a $C^2$-neighborhood of $f$, $\mathcal{U}(f)$, in which the holonomy for $\gamma$ varies continuously with the diffeomorphism. This is, if $g\in \mathcal{U}(f)$ is $C^1$-close enough to $f$ and $\gamma_g$ is an $su$-path for $g$ whose nodes are close enough to the nodes of $\gamma$, then the respective holonomies are close. We are going to precise this statement in the following two results which are corollaries of the propositions above. 

When we refer to the distance between a point $a\in \mathit{\mathbb{P}(E^c(x,f))}$ and a point $a(g)\in \mathit{\mathbb{P}(E^c(y,g))}$, we are considering both as elements in $\mathit{\mathbb{P}(TM)}$. The distance between points in different fibers is defined using parallel transport. More precisely, for every $x,y\in M$ close enough, denote $\pi_{x,y}:T_{x}M\longrightarrow T_{y}M$ the parallel transport along $\chi$, where $\chi$ is the geodesic satisfying $dist(x,y)=\text{length}(\chi)$. Then, given two points $(x,v)$ and $(y,w)$ in $\mathit{\mathbb{P}(TM)}$ define $$d((x,v),(y,w))=dist(x,y) + \angle(\pi_{x,y}(v), w).$$

From now on we fix the $C^2$ neighborhood $\mathcal{U}(f)$ where Proposition 3.4 and Corollary 3.5 of \cite{M} hold.

The first corollary is a consequence of Proposition \ref{a cont}.

\begin{cor}\label{hol a cont} Let $*\in \{\mu, \omega\}$, $r\geq 2$ and $f\in \mathcal{B}^r_{*}(M)$. There exists $C>0$ such that for every $x\in M$, $x_k\to x$ and every sequence $f_k\to f$ in the $C^1$ topology with $f_k\in \mathcal{U}(f)$ for every $k\in \mathbb{N}$, there exists $K\in \mathbb{N}$ such that for every $k\geq K$ and every $su$-path for $f_k$ given by Proposition \ref{uniform}, $\gamma_k=[z_0(f_k),...,z_N(f_k)]$, with $z_0(f_k)=x_k$, the $su$-path for $f$, $\gamma=[z_0,...,z_N]$, given by Proposition \ref{a cont} satisfies the following estimation for the holonomies of $\mathbb{P}(F)$ and $\mathbb{P}(F_k)$,
$$d(h_{\gamma}(c), h_{\gamma_{k}}(d))\leq \psi(k) + C \,d(c,d) \quad \forall\; c\in \mathit{\mathbb{P}(E^c(x,f))},\; d\in \mathit{\mathbb{P}(E^c(x_{k},f_{k}))},$$
where $\psi(k)$ goes to zero as $k$ goes to $\infty$.
\end{cor}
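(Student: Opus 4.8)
The plan is to combine two ingredients: the uniform combinatorial control of $su$-paths from Proposition \ref{uniform}, the geometric convergence of nodes from Proposition \ref{a cont}, and the local continuity of the individual $s$- and $u$-holonomies of $\mathbb{P}(F)$ under $C^1$-perturbation of the diffeomorphism, which is Proposition 3.4 and Corollary 3.5 of \cite{M} valid on the fixed neighbourhood $\mathcal{U}(f)$. The key point that makes a uniform constant $C$ possible is that, by Proposition \ref{uniform}, the number of nodes $N$ is fixed and the distances between consecutive nodes are bounded by $N$; hence every holonomy $h_\gamma$ and $h_{\gamma_k}$ is a composition of at most $N$ elementary holonomies along leaf segments of uniformly bounded length, and each such elementary holonomy is Lipschitz (as noted in the discussion after Definition \ref{hol}, since $h^s_{x,y}=\mathbb{P}(H^s_{x,y})$ and likewise for $u$) with a Lipschitz constant that can be taken uniform over all such bounded segments and over all $g$ in a $C^1$-neighbourhood of $f$ inside $\mathcal{U}(f)$.

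First I would fix, using Proposition \ref{uniform} and compactness of $M$ together with the uniform bunching/pinching, a constant $L\geq 1$ bounding the Lipschitz constant of every elementary holonomy $h^{*}_{z,w}$ (for $*\in\{s,u\}$, $w\in W^{*}_{\text{loc}}$, $\text{dist}(z,w)\leq N$) of $\mathbb{P}(F_g)$, uniformly for $g$ ranging over a $C^1$-neighbourhood of $f$ contained in $\mathcal{U}(f)$; then set $C=L^{N}$. Next, given $x$, $x_k\to x$ and $f_k\to f$ with $f_k\in\mathcal{U}(f)$, I would apply Proposition \ref{a cont} to obtain, for $k\geq K$, the $su$-path $\gamma=[z_0,\dots,z_N]$ for $f$ with $z_0=x$ and $\text{dist}(z_i,z_i(f_k))<\epsilon$ for all $i$, where $\epsilon=\epsilon(k)\to 0$ can be arranged (e.g.\ by a diagonal choice of $K=K(k)$) to tend to $0$. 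The core estimate is then a telescoping argument: writing $c_0=c$, $d_0=d$ and $c_{i+1}=h_{z_i}(c_i)$, $d_{i+1}=h_{z_i(f_k)}(d_i)$, I bound $d(c_{i+1},d_{i+1})$ by the triangle inequality through the intermediate point $h_{z_i}(d_i)$ (same holonomy of $\mathbb{P}(F)$ applied to $d_i$), getting
\begin{equation*}
d(c_{i+1},d_{i+1})\leq d\bigl(h_{z_i}(c_i),h_{z_i}(d_i)\bigr)+d\bigl(h_{z_i}(d_i),h_{z_i(f_k)}(d_i)\bigr)\leq L\, d(c_i,d_i)+\delta_i(k),
\end{equation*}
where the first term uses the uniform Lipschitz bound and the second term, $\delta_i(k)$, measures how much a single elementary holonomy changes when we pass from $f$ to $f_k$ and move the endpoints from $(z_i,z_{i+1})$ to $(z_i(f_k),z_{i+1}(f_k))$ — and this is exactly what Proposition 3.4 / Corollary 3.5 of \cite{M} controls, giving $\delta_i(k)\to 0$ as $k\to\infty$, uniformly in $d_i$ since the fibers are compact circles.

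Iterating the recursion over $i=0,\dots,N-1$ yields $d(h_\gamma(c),h_{\gamma_k}(d))=d(c_N,d_N)\leq L^{N}d(c,d)+\sum_{i=0}^{N-1}L^{N-1-i}\delta_i(k)$, so we set $C=L^{N}$ and $\psi(k)=\sum_{i=0}^{N-1}L^{N-1-i}\delta_i(k)$, which tends to $0$ because each $\delta_i(k)$ does and $N$, $L$ are fixed; note the $\text{dist}(x,y)$ part of $d$ contributes the $\epsilon(k)$ from Proposition \ref{a cont} into $\psi(k)$ as well. The main obstacle I anticipate is purely bookkeeping rather than conceptual: one must verify that the per-step perturbation bound $\delta_i(k)$ from \cite{M} is genuinely uniform in the point $d_i\in\mathbb{P}(E^c)$ and in the choice of leaf segment among the finitely many bounded configurations allowed by Proposition \ref{uniform}, and that the Lipschitz constant $L$ can indeed be chosen uniformly over the whole $C^1$-neighbourhood — both follow from compactness of $M$, of the fibers, and of the relevant space of admissible node configurations, together with the continuous dependence of $E^c$ and of $Df|E^c$ on $f$, but the statement must be assembled carefully so that the diagonal passage $k\mapsto K(k)$, $k\mapsto \epsilon(k)$ is legitimate.
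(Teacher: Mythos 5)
Your proposal is correct and is precisely the argument the paper leaves implicit: the paper states Corollary \ref{hol a cont} without proof, asserting it as a consequence of Proposition \ref{a cont} together with Proposition 3.4 and Corollary 3.5 of \cite{M}, and your telescoping of the composition of at most $N$ elementary holonomies, using the uniform Lipschitz constant $L$ (from $h^s_{x,y}=\mathbb{P}(H^s_{x,y})$, uniform over $\mathcal{U}(f)$ and segments of length $\leq N$ by compactness) for the ``same-diffeomorphism'' term and the continuity estimates of \cite{M} for the per-step perturbation terms $\delta_i(k)$, is exactly the content those references supply. The only minor cleanup needed is that when you write $h_{z_i}(d_i)$ with $d_i\in\mathbb{P}(E^c(z_i(f_k),f_k))$ you are implicitly identifying nearby fibers via the ambient embedding into $\mathbb{P}(TM)$ and parallel transport — which is the paper's stated convention for cross-fiber distances, so this is fine, but should be made explicit.
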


The second corollary is a consequence of Proposition \ref{continuity-su}.

\begin{cor}\label{bound-sequence} Let $*\in \{\mu, \omega\}$, $r\geq 2$ and $f\in \mathcal{B}^r_{*}(M)$. There exists $C>0$ such that for every $x,y\in M$, $x_k\to x$, $y_k\to y$ and every sequence $f_k\to f$ in the $C^1$ topology with $f_k\in \mathcal{U}(f)$ for every $k\in \mathbb{N}$, the $su$-paths given by Proposition \ref{continuity-su}, denoted by $\gamma_{k_j}$ and $\gamma$, can be taken to satisfy the following estimation for the holonomies defined by them,
$$d(h_{\gamma}(c), h_{\gamma_{k_j}}(d))\leq \psi(k_j) + C \,d(c,d) \quad \forall\; c\in \mathit{\mathbb{P}(E^c(x,f))},\; d\in \mathit{\mathbb{P}(E^c(x_{k_j}, f_{k_j}))},$$
where $\psi(k_j)$ goes to zero as $k_j$ goes to $\infty$.
\end{cor}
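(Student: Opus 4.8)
The plan is to combine Proposition~\ref{continuity-su} with the continuous dependence of the holonomies of $\mathbb{P}(F)$ on the diffeomorphism provided by Proposition~3.4 and Corollary~3.5 of \cite{M}; the only point that needs care is to produce a Lipschitz constant $C$, uniform in $k_j$, in $x,y$ and in the chosen $su$-paths, so that the dependence on $d(c,d)$ comes out with a single constant. Concretely, I would first fix $x,y,x_k,y_k$ and $f_k$ as in the statement and apply Proposition~\ref{continuity-su}: along a subsequence $k_j$ it yields an $su$-path $\gamma=[z_0,\dots,z_n]$ for $f$ joining $x$ to $y$ and $su$-paths $\gamma_{k_j}=[z_0(f_{k_j}),\dots,z_n(f_{k_j})]$ for $f_{k_j}$ joining $x_{k_j}$ to $y_{k_j}$, all with $n\le N$, with consecutive nodes at distance at most $N$, sharing the same combinatorial pattern of stable and unstable steps, and with $dist(z_i,z_i(f_{k_j}))\to 0$ for each $i$. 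Thus $h_\gamma=h_{z_{n-1}}\circ\cdots\circ h_{z_0}$ is a composition of at most $N$ elementary stable or unstable holonomies of $\mathbb{P}(F)$, and $h_{\gamma_{k_j}}$ is the analogous composition of at most $N$ elementary holonomies of $\mathbb{P}(F_{k_j})$.

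Next I would record the uniform Lipschitz bound. As recalled in Section~3, each elementary holonomy $h^s_{a,b}$ (resp.\ $h^u_{a,b}$) is the projectivization of a linear isomorphism $H^s_{a,b}\colon E^c(a)\to E^c(b)$, hence Lipschitz for the metric $d$ on $\mathbb{P}(TM)$. Since the $su$-paths furnished by Proposition~\ref{uniform} only involve elementary holonomies over leaf segments of uniformly bounded length, a compactness argument --- using that $E^c$ is a continuous subbundle and that the linear holonomies depend continuously on their endpoints --- produces $L\ge 1$ for which all of them are $L$-Lipschitz; shrinking $\mathcal{U}(f)$ if necessary, the same $L$ serves for every $f_{k_j}\in\mathcal{U}(f)$. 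Consequently $h_\gamma$ and each $h_{\gamma_{k_j}}$ are $C$-Lipschitz with $C:=L^N$, independent of $k_j$, of $x,y$ and of the particular $su$-path; this is the $C$ of the statement.

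It remains to carry out the comparison by a triangle inequality. Since $x_{k_j}\to x$ and $f_{k_j}\to f$ in the $C^1$ topology, the center bundle varies continuously, so there is a near-identity identification $\iota_{k_j}\colon\mathbb{P}(E^c(x,f))\to\mathbb{P}(E^c(x_{k_j},f_{k_j}))$ (parallel transport to $T_xM$, projection onto $E^c(x_{k_j},f_{k_j})$, normalization), which is bi-Lipschitz with constants tending to $1$ and satisfies $\sup_c d(c,\iota_{k_j}(c))\to 0$. Given $c\in\mathbb{P}(E^c(x,f))$ and $d\in\mathbb{P}(E^c(x_{k_j},f_{k_j}))$, put $c'=\iota_{k_j}^{-1}(d)$; then
\begin{equation*}
\begin{aligned}
d\left(h_\gamma(c),h_{\gamma_{k_j}}(d)\right)
&\le d\left(h_\gamma(c),h_\gamma(c')\right)+d\left(h_\gamma(c'),h_{\gamma_{k_j}}(\iota_{k_j}(c'))\right)\\
&\le C\,d(c,c')+\sup_{c''}d\left(h_\gamma(c''),h_{\gamma_{k_j}}(\iota_{k_j}(c''))\right),
\end{aligned}
\end{equation*}
with $c''$ ranging over $\mathbb{P}(E^c(x,f))$. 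Here $d(c,c')\le d(c,d)+\sup_{d'}d(d',\iota_{k_j}^{-1}(d'))$, while the supremum term tends to $0$ by Proposition~3.4 and Corollary~3.5 of \cite{M} --- continuity of the holonomy of a bounded-complexity $su$-path with respect to the diffeomorphism inside $\mathcal{U}(f)$ and to the position of its nodes --- uniformly in $c''$ by compactness of the fibers. Taking $\psi(k_j):=C\sup_{d'}d(d',\iota_{k_j}^{-1}(d'))+\sup_{c''}d(h_\gamma(c''),h_{\gamma_{k_j}}(\iota_{k_j}(c'')))$, which tends to $0$, gives the asserted estimate; Corollary~\ref{hol a cont} follows in the same way, with Proposition~\ref{a cont} in place of Proposition~\ref{continuity-su}.

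The hard part is exactly this uniformity of $C$: one needs that the elementary holonomies attached to the $su$-paths of Proposition~\ref{uniform} run over leaf segments of uniformly bounded length (so that a single Lipschitz constant works for all of them) and that $\mathcal{U}(f)$ can be shrunk so that these bounds persist for the nearby $f_{k_j}$. Granting the continuity statements of \cite{M}, everything else is routine triangle-inequality bookkeeping, relying on the compactness of $M$ and of the fibers $\mathbb{P}(E^c(x))$.
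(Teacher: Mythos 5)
Your proof is correct and follows the route the paper indicates (the paper gives no explicit proof, stating only that the corollary follows from Proposition~\ref{continuity-su} together with Proposition~3.4 and Corollary~3.5 of \cite{M}). You supply the two ingredients that make the deduction work: a uniform Lipschitz constant $C=L^N$ for the composed holonomies coming from the fact that the elementary holonomies are projectivizations of linear isomorphisms over leaf segments of uniformly bounded length (Proposition~\ref{uniform}), together with a compactness argument over $M$ and over $\mathcal{U}(f)$; and a near-identity bi-Lipschitz identification $\iota_{k_j}$ between $\mathbb{P}(E^c(x,f))$ and $\mathbb{P}(E^c(x_{k_j},f_{k_j}))$ so that the triangle inequality splits the estimate into the $C\,d(c,d)$ term plus a vanishing error controlled by the continuity of holonomies from \cite{M}. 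The one cosmetic slip is the parenthetical description of $\iota_{k_j}$: it should be parallel transport from $T_xM$ to $T_{x_{k_j}}M$ followed by orthogonal projection onto $E^c(x_{k_j},f_{k_j})$ and normalization, not ``parallel transport to $T_xM$''; this does not affect the argument. The bookkeeping in the final triangle inequality is correct, and the observation that the same scheme with Proposition~\ref{a cont} in place of Proposition~\ref{continuity-su} gives Corollary~\ref{hol a cont} is also accurate.
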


\section{Proof of Theorem A and C}

Recall that $\omega$ denotes a symplectic form, $\mu$ denotes a probability measure in the Lebesgue class and by Definition \ref{gral} if $r\geq 2$ and $*\in \{\mu, \omega\}$, then $\mathcal{B}^r_{*}(M)$ denotes the subset of $\mathit{PH}^r_{*}(M)$ where $f\in \mathcal{B}^r_{*}(M)$ if $f$ is accessible, $\alpha$-pinched and $\alpha$-bunched for some $\alpha>0$ and its center bundle is 2-dimensional. Moreover, by Definition \ref{pinch} we say that a periodic point $p$ is a pinching hyperbolic periodic point if the eigenvalues of $Df^{n_p}_p|E^c(p)$ have different norms and both norms are different from one.

We will prove the following theorem. 

\begin{teo}\label{open} Let $r\geq 2$ and $*\in \{\mu, \omega\}$. Suppose $f_k\to f$ in $\mathit{Diff}^r_{*}(M)$, $f\in \mathcal{B}^r_{*}(M)$ and $f$ has a pinching hyperbolic periodic point. If $\lambda^c_1(f_k)=\lambda^c_2(f_k)$ for every $k\in \mathbb{N}$, then $\lambda^c_1(f)=\lambda^c_2(f)$.
\end{teo}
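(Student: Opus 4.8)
The strategy is to argue by contradiction: suppose $\lambda^c_1(f_k)=\lambda^c_2(f_k)$ for every $k$ but $\lambda^c_1(f)\neq\lambda^c_2(f)$. By Equation~(\ref{lyap0}) this means $\lambda_+(F)>0>\lambda_-(F)$ for the normalized cocycle $F=\eta\cdot Df\vert E^c$, while $\lambda_+(F_k)=\lambda_-(F_k)=0$ for each $k$. Since the functions $f\mapsto\lambda^c_1(f)$ and $f\mapsto\lambda^c_2(f)$ are upper/lower semicontinuous, the equality $\lambda^c_1(f)\neq\lambda^c_2(f)$ combined with $\lambda^c_i(f_k)$ constant in $i$ forces $f$ to be a $C^r$ discontinuity point for the center Lyapunov exponents. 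Hence Proposition~\ref{discont} applies: every $\mathbb{P}(F)$-invariant probability measure projecting to $\mu$ is an $su$-state. In particular the Dirac-type measures $m^+=\int\delta_{\mathbb{P}(E^+_x)}\,d\mu$ and $m^-=\int\delta_{\mathbb{P}(E^-_x)}\,d\mu$ from Proposition~\ref{nozero} are $su$-states, and by Theorem~\ref{invariant} (using accessibility) they admit continuous, bi-invariant disintegrations. Because $m^\pm$ are supported on the graphs of the Oseledets sections, this says the measurable sections $x\mapsto\mathbb{P}(E^\pm_x)$ actually coincide $\mu$-a.e.\ with continuous sections $x\mapsto a_x$ and $x\mapsto b_x$ of $\mathbb{P}(E^c(f))$ that are invariant under $\mathbb{P}(F)$ and under the stable and unstable holonomies $h^s,h^u$.

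**Transferring the sections to $f_k$.** The heart of the argument is to show that, for all large $k$, the cocycle $\mathbb{P}(F_k)$ also admits a continuous holonomy-invariant section $x\mapsto a_{k,x}$ which is uniformly close to $a_x$ (or to $b_x$) for every $x\in M$. Since $\lambda_\pm(F_k)=0$, Corollary~\ref{zeroF} gives a $\mathbb{P}(F_k)$-invariant measure $m_k$ with continuous bi-invariant disintegration $\{m_{k,x}\}$. Pushing forward to $\mathbb{P}(TM)$ and passing to a subsequence, $m_{k_j}\to m_\infty$ weakly$^*$; the limit is $\mathbb{P}(F)$-invariant, projects to $\mu$, is supported in $\mathbb{P}(E^c(f))$, and inherits bi-invariance under the holonomies using the holonomy-continuity estimates of Corollary~\ref{hol a cont} and Corollary~\ref{bound-sequence}. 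By Proposition~\ref{nozero}, $m_\infty=t\,m^++(1-t)\,m^-$. One then argues that a nontrivial convex combination cannot arise as such a limit of measures with \emph{continuous} disintegrations whose conditionals on each fiber are single atoms that must jump between $a_x$ and $b_x$ — continuity of $x\mapsto m_{k_j,x}$ together with $\mu$-ergodicity of $f$ forces $t\in\{0,1\}$, so $m_\infty=m^+$ or $m_\infty=m^-$. Tracking the barycenters/atoms of $m_{k_j,x}$ as functions of $x$ and comparing with the continuous section $a_x$ (resp.\ $b_x$) via the holonomy estimates yields a continuous section $x\mapsto a_{k_j,x}$ of $\mathbb{P}(E^c(f_{k_j}))$, invariant under $\mathbb{P}(F_{k_j})$ and under its stable/unstable holonomies, with $d(a_{k_j,x},a_x)\to0$ (or with $b_x$) uniformly in $x$.

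**Deriving the contradiction.** A holonomy-invariant, $\mathbb{P}(F_k)$-invariant continuous section $x\mapsto a_{k,x}$ picks out a continuous sub-line-bundle $L_k\subset E^c(f_k)$ invariant under $F_k$. By Lemma~\ref{fust}, the value $\int\log\|F_{k}\vert L_k\|\,d\mu$ is a candidate for $\lambda_+(F_k)$, and by the closeness $a_{k,x}\approx a_x$ together with $F_k\to F$ in the appropriate sense, this integral converges to $\int\log\|F\vert E^+\|\,d\mu=\lambda_+(F)$ (or to $\lambda_-(F)$ in the $b_x$ case). But $\lambda_+(F_k)\geq\int\log\|F_k\vert L_k\|\,d\mu$ and $\lambda_-(F_k)\leq\int\log\|F_k\vert L_k\|\,d\mu$ with $\lambda_+(F_k)=\lambda_-(F_k)=0$ force $\int\log\|F_k\vert L_k\|\,d\mu=0$ for all $k$, hence $\lambda_+(F)=0$ or $\lambda_-(F)=0$, contradicting $\lambda_+(F)>0>\lambda_-(F)$. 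Therefore $\lambda^c_1(f)=\lambda^c_2(f)$, as claimed.

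**Main obstacle.** The delicate step is the passage to the limit: showing that the continuous bi-invariant disintegrations of the $m_k$ (which exist only because $\lambda_\pm(F_k)=0$) converge — after passing to a subsequence and using the uniform accessibility and holonomy-continuity results of Section~5 — to a section that is still invariant under the limiting holonomies of $\mathbb{P}(F)$, and to extract from it an honest continuous section $a_{k,x}$ for $f_k$ that is $C^0$-close to $a_x$ (or $b_x$) \emph{uniformly in $x$}. The subtlety is that $\mathbb{P}(E^c(f_k))$ and $\mathbb{P}(E^c(f))$ are different bundles, so all estimates must be carried out inside $\mathbb{P}(TM)$ via parallel transport, and the holonomy estimates of Corollaries~\ref{hol a cont} and~\ref{bound-sequence} (which control $d(h_{\gamma}(c),h_{\gamma_k}(d))$ in terms of $d(c,d)$ plus an error $\psi(k)\to0$) are exactly what make the two sets of holonomies comparable along the $su$-paths of bounded combinatorics supplied by Proposition~\ref{uniform}.
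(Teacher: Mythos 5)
Your proposal follows the paper's overall skeleton: the same $F$, $m^\pm$, $su$-states, Theorem~\ref{invariant}, Corollary~\ref{zeroF}, and the same final contradiction with $\lambda_\pm(F_k)=0$. However, there is a genuine gap in the middle ``transferring the sections'' step.

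You assert that the conditionals $m_{k,x}$ of the $\mathbb{P}(F_k)$-invariant measure $m_k$ ``are single atoms that must jump between $a_x$ and $b_x$,'' and that $\mu$-ergodicity of $f$ forces the limit $m_\infty=t\,m^++(1-t)\,m^-$ to have $t\in\{0,1\}$. Neither of these is correct as stated. Corollary~\ref{zeroF} only produces a continuous bi-invariant disintegration; it does \emph{not} say that each $m_{k,x}$ is a Dirac mass. And $\mu$-ergodicity of $f$ does nothing to exclude $t\in(0,1)$: every convex combination $t\,m^++(1-t)\,m^-$ is a legitimate $\mathbb{P}(F)$-invariant measure projecting onto the same ergodic $\mu$, with a perfectly continuous bi-invariant disintegration whose supports are two-point sets. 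In particular, if the subsequential limit happened to be $\tfrac12 m^++\tfrac12 m^-$, your endgame $\int\Phi\,dm_\infty\to 0$ would not even contradict $\lambda_+(F_k)=0$.

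The missing ingredient is the ergodicity of $m_k$ for the \emph{projective cocycle} $\mathbb{P}(F_k)$ combined with localization at the pinching hyperbolic periodic point. One first uses $\mathbb{P}(F_k)$-invariance (Remark~\ref{finv}) to get $\mathrm{supp}\,m_{k,p(f_k)}\subset\{a(f_k),b(f_k)\}$. If both atoms were present with weights $t,1-t\in(0,1)$, one must then propagate $a(f_k)$ and $b(f_k)$ along $su$-paths (using Proposition~\ref{uniform} and Corollary~\ref{hol a cont}) and the disjointness $B_\epsilon(M^+)\cap B_\epsilon(M^-)=\emptyset$ to obtain two well-defined, continuous, $\mathbb{P}(F_k)$-invariant sections $a_{k,x}$, $b_{k,x}$; this splits $m_k$ as $t\,m_k^++(1-t)\,m_k^-$, contradicting the chosen ergodicity of $m_k$. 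Only after this does one know that $m_{k,x}$ is a single Dirac for every $x$, that it is uniformly close to $a_x$ (or $b_x$), and hence that $m_{k_j}\to m^+$ (or $m^-$) rather than to an arbitrary convex combination. Your proposal assumes precisely this conclusion without proving it, and the wrong ergodicity is invoked to justify it.
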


It is clear that this theorem will imply Theorem C. Moreover, suppose that Theorem A is not true. Therefore, there exists $f\in \mathcal{B}^r_{\omega}(M)$ such that $f$ is a non-uniformly hyperbolic diffeomorphism and there exists a sequence $f_k\to f$ in $\mathit{Diff}^r_{\omega}(M)$ with $\lambda^c_1(f_k)=\lambda^c_2(f_k)$ for every $k\in \mathbb{N}$. By Theorem 4.2 of \cite{K}, there exists a hyperbolic periodic point $p$ for $f$ which is in fact a pinching hyperbolic periodic point, because $f$ is a symplectic diffeomorphism. This contradicts Theorem \ref{open} and therefore Theorem A has to be true.  

\vspace{0.3cm}
\emph{Proof of Theorem \ref{open}.}
Let $r\geq 2$ and $*\in \{\mu, \omega\}$. Let $f_k\to f$ in $\mathit{Diff}^r_{*}(M)$ and $\lambda^c_1(f_k)=\lambda^c_2(f_k)$ for every $k\in \mathbb{N}$. Assume that $f\in \mathcal{B}^r_{*}(M)$, $f$ has a pinching hyperbolic point and $\lambda^c_1(f)\neq \lambda^c_2(f)$. 

By the hypotheses, $f$ is a $C^r$ discontinuity point for the center Lyapunov exponents. Moreover, if $\mathit{F=\eta\cdot Df\vert E^c}$, by Equation (\ref{lyap0}), $\lambda_{+}(F)>0>\lambda_{-}(F)$. See the argument in the first paragraph of the proof of Proposition \ref{discont}.

We can apply Proposition \ref{nozero} and Proposition \ref{discont}. Then, there exist two $\mathbb{P}(F)$-invariant probability measures projecting down to $\mu$, $m^{+}$ and $m^{-}$, which are $su$-states.
 
By Theorem \ref{invariant} we know that both $m^{+}$ and $m^{-}$ admit disintegrations which are bi-invariant and their conditional probabilities depend continuously on the base point $x\in M$, relative to the weak$^{*}$ topology. We are going to denote these disintegrations by $\{m^{+}_x: x\in M\}$ and $\{m^{-}_x: x\in M\}$, respectively. Observe that $m^{+}_x=\delta_{\mathbb{P}(E^{+}_x)}\;$ and $m^{-}_x=\delta_{\mathbb{P}(E^{-}_x)}\;$ for $\mu$-almost every $x\in M$. 

Define $$\mathit{M^{+}=\mathit{supp}\,m^{+}=\{(x, \mathit{supp}\, m^{+}_x) : x\in M\}},$$ $$\mathit{M^{-}=\mathit{supp}\, m^{-}=\{(x, \mathit{supp}\, m^{-}_x) : x\in M\}.}$$ Then, $M^{+}\cap M^{-}=\emptyset$. Since the disintegrations are bi-invariant and $f$ is accessible, if there were some point $(x,v) \in M^{+}\cap M^{-}$, it would imply that $M^{+}=M^{-}$ which is a contradiction. 

Since $M^{+}$ and $M^{-}$ are two disjoint compact sets of $\mathit{\mathbb{P}(TM)}$, there exists $\epsilon>0$ such that $$B_{\epsilon}(M^{+})\cap B_{\epsilon}(M^{-})=\emptyset.$$

Let $p$ be a pinching hyperbolic periodic point for $f$ and $n_p=per(p)$. Define $a,b\in \mathit{\mathbb{P}(E^c(p,f))}$ as $a=\mathbb{P}(E^1)$ and $b=\mathbb{P}(E^2)$, where $E^1$ and $E^2$ are the subspaces of $\mathit{E^c(p,f)}$ associated to the eigenvalues of $\mathit{Df^{n_p}_p\vert E^c(p,f)}$. 

For every $k\in \mathbb{N}$, let $F_k=\eta_k\cdot Df_k\vert E^c(f_k)$ and $m_k$ be any ergodic probability measure for $\mathbb{P}(F_k)$. By Equation (\ref{lyap0}), $\lambda_{+}(F_k)=\lambda_{-}(F_k)$. If $k$ is big enough, $f_k\in \mathcal{B}^r_{*}(M)$ and we can apply Corollary \ref{zeroF}. This implies that there exists a disintegration $\{m_{k,x}: x\in M\}$ which is bi-invariant and $m_{k,x}$ depends continuously on the base point $x\in M$.

Moreover, if $k$ is big enough, there exists a pinching hyperbolic periodic point for $f_k$ that we denote $p(f_k)$, such that $p(f_k)\to p$ as $k\to \infty$. If $a(f_k)=\mathit{\mathbb{P}(E^1(f_k))}$ and $b(f_k)=\mathit{\mathbb{P}(E^2(f_k))}$ where $E^1(f_k)$ and $E^2(f_k)$ are the subspaces of $\mathit{E^c(p(f_k), f_k)}$ associated to the eigenvalues of $Df_{k}^{n_p}\vert E^c(p(f_k), f_k)$, then $a(f_k)\to a$ and $b(f_k)\to b$ when $k\to \infty$.   

Since the measure $m_k$ is $\mathbb{P}(F_k)$-invariant, by Remark \ref{finv}, we have $$\mathit{supp}\, m_{k, p(f_k)}\subset \{a(f_k), b(f_k)\}.$$ We are going to prove that there exists a subsequence $k_j$ such that $$m_{k_j,p(f_{k_j})}=\delta_{a(f_{k_j})}\quad \text{or}\quad m_{k_j,p(f_{k_j})}=\delta_{b(f_{k_j})},$$ for every $j\in \mathbb{N}$. 

In order to prove the statement above, suppose there exists $K\in \mathbb{N}$ such that for every $k\geq K$ there exists $t\in (0,1)$ such that $m_{k, p(f_k)}= t \delta_{a(f_k)} + (1-t) \delta_{b(f_k)}$. 

Fix $x\in M$. By Proposition \ref{uniform}, there exists $\gamma_{k}=\gamma(f_k,p(f_k),x)$ an $su$-path for $f_k$ joining $p(f_k)$ to $x$ with a uniform bound for the number of nodes and the distance between them. 

If $h_{\gamma_k}$ denotes the holonomy for $\mathbb{P}(F_k)$ associated to $\gamma_{k}$, then we define $$a_{k,x}=h_{\gamma_k}(a(f_k))\quad \text{and}\quad b_{k,x}=h_{\gamma_k}(b(f_k)).$$


When $k$ is big enough, we have the following properties for $a_{k,x}$ and $b_{k,x}$:

\begin{enumerate}[label=(\alph*)] 
\item For every $x\in M$, $a_{k,x}$ and $b_{k,x}$ do not depend on the $su$-path $\gamma_k$.
\item For every $x\in M$, $\mathbb{P}(F_{k,x})(a_{k,x})=a_{k, f_k(x)}$ and $\mathbb{P}(F_{k,x})(b_{k,x})=b_{k, f_k(x)}$.
\item $x\mapsto a_{k,x}$ and $x\mapsto b_{k,x}$ vary continuously with the point $x\in M$. 
\end{enumerate}

Notice that by Corollary \ref{hol a cont} applied to $f_k\to f$ and $p(f_k)\to p$, there exist $C>0$ and a function $\psi$ depending only on $k$ such that for every $su$-path, $\gamma_k$, given by Proposition \ref{uniform} for $f_k$ joining $p(f_k)$ to $x$, there exists an $su$-path for $f$ denoted by $\gamma$ and joining $p$ to a point $y$ close to $x$, such that $$d(h_{\gamma}(a), h_{\gamma_k}(a(f_k)))\leq \psi(k) + C\, d(a, a(f_k)),$$ and $$d(h_{\gamma}(b), h_{\gamma_k}(b(f_k)))\leq \psi(k) + C\, d(b, b(f_k)),$$ where $\psi(k)\to 0$ as $k\to \infty$. 

By Remark \ref{finv}, since $m^{+}$ and $m^{-}$ are $\mathbb{P}(F)$-invariant probability measures and the subspaces associated to the eigenvalues of $\mathit{Df^{n_p}_p\vert E^c(p,f)}$ are denoted by $a,b\in \mathit{\mathbb{P}(E^c(p,f))}$, we have $a=\mathit{supp}\, m^{+}_p$ and $b=\mathit{supp}\, m^{-}_p$. Then, for $k$ big enough, we have 
\begin{equation}\label{ab}
d(\mathit{supp}\, m^{+}_x,h_{\gamma_{k}}(a(f_k)))< \epsilon/2 \quad \text{and} \quad d(\mathit{supp}\, m^{-}_x, h_{\gamma_{k}}(b(f_k)))< \epsilon/2.
\end{equation}
Here we are using that the disintegrations for $m^{+}$ and $m^{-}$ are bi-invariant and $m^{+}_x$ and $m^{-}_x$ depend continuously on the base point $x\in M$.

Moreover, the disintegration of $m_k$ is also bi-invariant. Therefore, $$\mathit{supp}\,m_{k,x}= \{h_{\gamma_k}(a(f_k)), h_{\gamma_k}(b(f_k))\},$$ for every $su$-path $\gamma_k$ joining $p(f_k)$ to $x$. This is a consequence of the fact that we are assuming $m_{k, p(f_k)}= t \delta_{a(f_k)} + (1-t) \delta_{b(f_k)}$. 

We are going to use the observations above to prove properties (a) to (c).

\emph{Proof of} (a): Let $\gamma_{k,1}$ and $\gamma_{k,2}$ be two $su$-paths for $f_k$ joining $p(f_k)$ to $x$ and given by Proposition \ref{uniform}. Since the disintegration of $m_k$ is bi-invariant, $h_{\gamma_{k,1}}(a(f_k))=h_{\gamma_{k,2}}(a(f_k))$ or $h_{\gamma_{k,1}}(a(f_k))=h_{\gamma_{k,2}}(b(f_k))$. Suppose we are in the second case, by Equation (\ref{ab}), $$d(\mathit{supp}\, m^{+}_x, h_{\gamma_{k,1}}(a(f_k)))< \epsilon/2 \quad \text{and} \quad d(\mathit{supp}\, m^{-}_x, h_{\gamma_{k,2}}(b(f_k)))< \epsilon/2,$$ and we get a contradiction. This is because $\epsilon>0$ was chosen to satisfy $$B_{\epsilon}(M^{+})\cap B_{\epsilon}(M^{-})=\emptyset,$$ where $M^{+}=\mathit{supp}\,m^{+}$ and $M^{-}=\mathit{supp}\,m^{-}$. Then, $h_{\gamma_{k,1}}(a(f_k))=h_{\gamma_{k,2}}(a(f_k))$ and $h_{\gamma_{k,1}}(b(f_k))=h_{\gamma_{k,2}}(b(f_k))$ as we wanted to prove. 

\emph{Proof of} (b): By the definition of $a_{k,x}$ and $b_{k,x}$ and the disintegration of $m_k$ being bi-invariant, we have $$\mathit{supp}\, m_{k,x}=\{a_{k,x}, b_{k,x}\}\quad \text{and} \quad \mathit{supp}\, m_{k,f_k(x)}=\{a_{k,f_k(x)}, b_{k,f_k(x)}\},$$ for every $x\in M$. Moreover, by Remark \ref{finv}, $$\mathbb{P}(F_{k,x})(a_{k,x})=a_{k,f_k(x)}\quad \text{or}\quad \mathbb{P}(F_{k,x})(a_{k,x})=b_{k,f_k(x)}.$$
Suppose the second case happens, that is $\mathbb{P}(F_{k,x})(a_{k,x})=b_{k,f_k(x)}$. Then, by Equation (\ref{ab}), $$d(\mathit{supp}\, m^{-}_{f_k(x)},\mathbb{P}(F_{k,x})(a_{k,x}))< \epsilon/4,$$ if $k$ is big enough. Moreover, since $f_k\to f$ and $m^{-}_x$ depends continuously on the base point $x\in M$, we can suppose that $k$ is big enough such that $$d(\mathit{supp}\, m^{-}_{f(x)},\mathbb{P}(F_{k,x})(a_{k,x}))< \epsilon/2.$$
On the other hand, again by Equation (\ref{ab}), $$d(\mathit{supp}\, m^{+}_{x}, a_{k,x})< \epsilon/4,$$ and since $f_k\to f$, we have $$d(\mathit{supp}\, m^{+}_{f(x)},\mathbb{P}(F_{k,x})(a_{k,x}))=d(\mathbb{P}(F_{x})(\mathit{supp}\, m^{+}_{x}),\mathbb{P}(F_{k,x})(a_{k,x}))< \epsilon/2.$$ 
Therefore, $\mathbb{P}(F_{k,x})(a_{k,x})$ is $\epsilon/2$-close to $M^{+}=\mathit{supp}\,m^{+}$ and $M^{-}=\mathit{supp}\,m^{-}$ which is a contradiction because $\epsilon$ satisfies $B_{\epsilon}(M^{+})\cap B_{\epsilon}(M^{-})=\emptyset.$ This implies that $\mathbb{P}(F_{k,x})(a_{k,x})=a_{k,f_k(x)}$ and $\mathbb{P}(F_{k,x})(b_{k,x})=b_{k,f_k(x)}$ for every $x\in M$.

\emph{Proof of} (c): This is a consequence of $m_{k,x}$ depending continuously on the base point $x\in M$, $\mathit{supp}\, m_{k,x}=\{a_{k,x},b_{k,x}\}$, Equation (\ref{ab}) and $B_{\epsilon}(M^{+})\cap B_{\epsilon}(M^{-})=\emptyset.$

Properties (a) to (c) allow us to define two $\mathbb{P}(F_k)$-invariant probability measures projecting down to $\mu$ by $$m_k^{+}=\int \delta_{a_{k,x}}\, d\mu \quad \text{and}\quad m_k^{-}=\int \delta_{b_{k,x}}\, d\mu.$$ 

Recall that we are assuming there exists $K\in \mathbb{N}$ such that for every $k\geq K$ there exists $t\in (0,1)$ such that $m_{k, p(f_k)}= t \delta_{a(f_k)} + (1-t) \delta_{b(f_k)}$. Moreover, by the definition of $a_{k,x}$ and $b_{k,x}$, we have $m_{k,x}= t \delta_{a_{k,x}} + (1-t) \delta_{b_{k,x}}$. Then, $m_k$ can be written as $m_k= t\, m_k^{+} + (1-t)\, m_k^{-}$. This is a contradiction, since we chose $m_k$ to be ergodic. Therefore, there exists a subsequence $k_j$ such that $m_{k_j,p(f_{k_j})}=\delta_{a(f_{k_j})}$ or $m_{k_j,p(f_{k_j})}=\delta_{b(f_{k_j})}$ for every $j\in \mathbb{N}$. 

Suppose $m_{k_j,p(f_{k_j})}=\delta_{a(f_{k_j})}$ for every $j\in \mathbb{N}$, the other case is analogous. By definition of $a_{k,x}$, we have $m_{k_j,x}=\delta_{a_{k_j}, x}$ for every $x\in M$. By Equation (\ref{ab}), for every $\epsilon>0$ there exists $J\in \mathbb{N}$ such that $d(a_{{k_j},x}, \mathit{supp}\, m^{+}_x)< \epsilon$ for every $j\geq J$ and every $x\in M$. Then, $m_{k_j}\to m^{+}$ when $j$ goes to $\infty$.

Since $f_k\to f$, we have $\lambda_{+}(F_{k_j})=\int{ \Phi_{k_j}(x,v)\, d\, m_{k_j}}\to \int{ \Phi(x,v)\, d\, m^{+}}=\lambda_{+}(F)$. However, we were assuming that $\lambda^c_1(f_k)=\lambda^c_2(f_k)$ for every $k\in \mathbb{N}$ and $\lambda^c_1(f)\neq \lambda^c_2(f)$. By Equation (\ref{lyap0}), these hypotheses implies $\lambda_{+}(F_k)=0$ for every $k\in \mathbb{N}$ and $\lambda_{+}(F)>0$. Therefore, the conclusion we obtain, $\lambda_{+}(F_{k_j})\to \lambda_{+}(F)$ for some subsequence $k_j$, is a contradiction. Finally, we conclude $\lambda^c_1(f)$ must be equal to $\lambda^c_2(f)$ as we wanted to prove. 
\qed

\section{Proof of Theorem B and D}

Recall that $\omega$ denotes a symplectic form, $\mu$ denotes a probability measure in the Lebesgue class and by Definition \ref{gral} if $r\geq 2$ and $*\in \{\mu, \omega\}$, then $\mathcal{B}^r_{*}(M)$ denotes the subset of $\mathit{PH}^r_{*}(M)$ where $f\in \mathcal{B}^r_{*}(M)$ if $f$ is accessible, $\alpha$-pinched and $\alpha$-bunched for some $\alpha>0$ and its center bundle is 2-dimensional. Moreover, by Definition \ref{pinch} we say that a periodic point $p$ is a pinching hyperbolic periodic point if the eigenvalues of $Df^{n_p}_p|E^c(p)$ have different norms and both norms are different from one.

In the proof of Theorem A, we observed that if $f\in \mathcal{B}^r_{\omega}(M)$ is a discontinuity point for the center Lyapunov exponents, then $f$ has a pinching hyperbolic periodic point. 

We are going to prove the following theorem which implies Theorem B and D. 

\begin{teo}\label{interior} Let $r\geq 2$ and $*\in \{\mu, \omega\}$. Suppose $f\in \mathcal{B}^r_{*}(M)$ and $f$ has a pinching hyperbolic periodic point $p$. If $f$ is a $C^r$ discontinuity point for the center Lyapunov exponents, then $f$ can be $C^r$-approximated by open sets of $C^r$ continuity points for the center Lyapunov exponents. 
\end{teo}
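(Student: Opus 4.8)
\emph{Plan of proof.} The plan is to combine the structural consequences of the Invariance Principle recorded in Section 4 with a localized perturbation in the spirit of \cite{M}. First I would isolate the obstruction that must be destroyed. Suppose $h\in\mathcal{B}^r_*(M)$ is a $C^r$ discontinuity point for the center Lyapunov exponents. Exactly as in the first paragraph of the proof of Proposition \ref{discont}, upper/lower semicontinuity of $\lambda^c_1,\lambda^c_2$ together with $\lambda_{+}(F_h)+\lambda_{-}(F_h)=0$ give $\lambda_{+}(F_h)>0>\lambda_{-}(F_h)$, where $F_h=\eta_h\cdot Dh\vert E^c$. Then the ergodic measure $m^{+}_h$ of Proposition \ref{nozero} is a $u$-state and, by Proposition \ref{discont}, an $su$-state; applying Theorem \ref{invariant} to the accessible diffeomorphism $h$ yields a bi-invariant disintegration $\{m_x^h\}$ depending continuously on $x$. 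Since $\lambda_{+}(F_h)\neq\lambda_{-}(F_h)$, this disintegration is a Dirac mass for $\mu$-a.e.\ $x$; by continuity, $f$-invariance and $su$-saturation of the set where it is a Dirac (which is then a union of accessibility classes, hence all of $M$), it is a Dirac mass $\delta_{a_x^h}$ for \emph{every} $x$, and $x\mapsto a_x^h$ is a continuous section of $\mathbb{P}(E^c(h))$ invariant under $\mathbb{P}(F_h)$ (Remark \ref{finv}) and under all stable and unstable holonomies of $\mathbb{P}(F_h)$. If $p_h$ is a pinching hyperbolic periodic point of $h$ with $n_p=\mathrm{per}(p_h)$, then $a_{p_h}^h$ is fixed by $\mathbb{P}(F_h^{\,n_p})=\mathbb{P}(Dh^{n_p}_{p_h}\vert E^c)$, which has exactly two fixed points, namely the two eigendirections of $Dh^{n_p}_{p_h}\vert E^c$. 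Hence it suffices, in order to conclude that $h$ is a $C^r$ continuity point, to exhibit one $su$-loop at $p_h$ whose holonomy for $\mathbb{P}(F_h)$ fixes neither eigendirection: invariance of $x\mapsto a_x^h$ along the loop would otherwise force it to fix $a_{p_h}^h$, a contradiction.

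Second, I would produce the perturbation. Fix an $su$-loop $\gamma=[p=z_0,z_1,\dots,z_n=p]$ for $f$, which exists by accessibility and Proposition \ref{uniform}; write $\gamma=\gamma_2\cdot\gamma_1$ where $\gamma_1$ goes from $p$ to an interior point $q$ of one strong segment of $\gamma$ chosen off the orbit of $p$ and off the nodes. Since $f$ is a discontinuity point, the section of the previous paragraph applied to $f$ shows that the holonomy $h_\gamma$ for $\mathbb{P}(F)$ fixes both eigendirections $a,b$ of $Df^{n_p}_p\vert E^c$. Using the localized perturbation mechanisms of \cite{M} (those underlying Proposition 3.4 and Corollary 3.5 there), for every $\varepsilon>0$ I would build $g\in\mathit{Diff}^r_*(M)$ that is $\varepsilon$-$C^r$-close to $f$, equal to $f$ outside an arbitrarily small ball around $q$, and whose effect on the center holonomy along the strong segment through $q$ is post-composition by $\mathbb{P}(A)$ for a linear map $A$ close to the identity that stabilizes neither of the directions $h_{\gamma_1}(a)$, $h_{\gamma_1}(b)$. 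Since $g$ coincides with $f$ near the orbit of $p$ we have $Dg^{n_p}_p\vert E^c=Df^{n_p}_p\vert E^c$, so $p$ remains a pinching hyperbolic periodic point for $g$ with the same eigendirections; and because $\mathcal{B}^r_*(M)$, and when $*=\mu$ also $\mathcal{P}^r_\mu(M)$, is $C^1$ open, $g$ lies in it. The resulting loop $\gamma_g$ for $g$ satisfies $h_{\gamma_g}=h_{\gamma_2}\circ\mathbb{P}(A)\circ h_{\gamma_1}$, and since $h_\gamma$ fixed $a$ and $b$ one computes $h_{\gamma_g}(a)\neq a$ and $h_{\gamma_g}(b)\neq b$; the set of admissible $A$ near the identity is open and nonempty (it is the complement of two codimension-one subsets), so such $g$ exists arbitrarily close to $f$.

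Third, I would turn this into an open condition and conclude. I claim there is a $C^r$-neighborhood $\mathcal{V}(g)\subset\mathcal{B}^r_*(M)$ (resp.\ $\subset\mathcal{P}^r_\mu(M)$) such that every $h\in\mathcal{V}(g)$ admits an $su$-loop at the continuation $p_h$ of $p$ — still pinching hyperbolic, these being $C^1$-open conditions — whose $\mathbb{P}(F_h)$-holonomy fixes neither eigendirection of $Dh^{n_p}_{p_h}\vert E^c$. Otherwise there would exist $h_k\to g$ in $\mathit{Diff}^r_*(M)$ violating this; applying Proposition \ref{continuity-su} and Corollary \ref{bound-sequence} to $h_k\to g$ with endpoints $p_{h_k}\to p$ gives, along a subsequence, $su$-loops $\gamma_{h_k}$ for $h_k$ with nodes converging to those of $\gamma_g$ and holonomies $h_{\gamma_{h_k}}(c)$ converging to $h_{\gamma_g}(\cdot)$ uniformly; since the eigendirections $a(h_k)\to a$, $b(h_k)\to b$ and the inequalities $h_{\gamma_g}(a)\neq a$, $h_{\gamma_g}(b)\neq b$ are strict, for large $k$ one gets $h_{\gamma_{h_k}}(a(h_k))\neq a(h_k)$ and $h_{\gamma_{h_k}}(b(h_k))\neq b(h_k)$, contradicting the choice of $h_k$. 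By the first paragraph every $h\in\mathcal{V}(g)$ is then a $C^r$ continuity point for the center Lyapunov exponents, and since $\varepsilon$ was arbitrary $f$ is $C^r$-approximated by the open sets $\mathcal{V}(g)$ of continuity points, which is the theorem. Theorems B and D follow by letting $\mathcal{U}$ be the interior of the set of $C^r$ continuity points, which is open, consists of continuity points, and — by the theorem applied to each discontinuity point, every such point in $\mathcal{B}^r_\omega(M)$ (resp.\ $\mathcal{P}^r_\mu(M)$) having a pinching hyperbolic periodic point — is dense.

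The main obstacle is the perturbation step of the second paragraph: it must be realized inside the volume-preserving (resp.\ symplectic) class while genuinely twisting $E^c$ near $q$ in the prescribed way, and one must control how that twist propagates to the holonomy of the whole loop $\gamma_g$. Here one uses crucially that $g$ agrees with $f$ away from a tiny ball around $q$, so that all holonomy factors except the one at $q$ are unchanged, together with the continuity of holonomies under perturbation (Corollaries \ref{hol a cont}–\ref{bound-sequence}). This is precisely the content imported from \cite{M}, whose details are supplied in Section 7.
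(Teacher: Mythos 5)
Your step 1 (discontinuity forces a continuous $su$-invariant Dirac section, whence continuity follows from the existence of one $su$-loop at $p$ whose holonomy fixes neither eigendirection) is correct and matches the mechanism the paper extracts from Propositions \ref{discont} and \ref{invariant}, and your step 3 (robustness via Proposition \ref{continuity-su} and Corollary \ref{bound-sequence}) is also sound modulo step 2. The gap is in step 2, and it is not a detail you can defer to \cite{M}: you claim that for $q$ an interior point of a strong segment ``off the orbit of $p$ and off the nodes,'' a perturbation supported in a small ball $B_\delta(q)$ changes the loop holonomy by exactly $h_{\gamma_2}\circ\mathbb{P}(A)\circ h_{\gamma_1}$, because ``all holonomy factors except the one at $q$ are unchanged.'' This is false. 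The holonomies $h^{s}_{z_i,z_{i+1}}$ and $h^{u}_{z_i,z_{i+1}}$ are limits over the full forward (resp.\ backward) orbits of the segments; since $f\in\mathcal{B}^r_*(M)$ is ergodic, the orbit of $q$ --- and the orbits of every node --- return to $B_\delta(q)$ infinitely often, so the perturbation contaminates every factor of the loop holonomy, not just the one at $q$. Moreover the perturbed diffeomorphism has different strong foliations, so the very nodes $z_i$ move. Merely requiring $q$ off the orbit of $p$ and off the nodes gives no control on the recurrence time to $B_\delta(q)$, and hence no control on how these accumulated corrections compare to the twist $\mathbb{P}(A)$.

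This is precisely what the paper's Propositions \ref{pivot}, \ref{nodes} and \ref{pert-holonomy} are for. Proposition \ref{pivot} (from \cite{ASV}) supplies not an arbitrary interior point but a \emph{node} $z_l$ of the loop with a quantified slow recurrence $dist(f^j(z_i),z_l)\geq c/(1+j^2)$; the perturbation is then done on balls $B_{\delta_k}(z_l)$ with $\delta_k$ only polynomially small in $k$, so that the achievable rotation angle $\beta_k\sim\delta_k^{r-1}\varepsilon$ also decays only polynomially, while Propositions \ref{nodes} and \ref{pert-holonomy} show that the node displacements and the parasitic holonomy corrections decay \emph{exponentially} in $k$ thanks to the slow recurrence. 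Your proposal skips this entire quantitative comparison, which is the heart of the proof. (It also changes the location of the perturbation from a carefully chosen node to an arbitrary interior point, losing the input from Proposition \ref{pivot}.) The toy model in Section 7, which your argument most closely resembles, relies on the non-recurrence of a point $z\in W^{ss}(p)\cap W^{uu}(p)$, an assumption that does not hold in general and that the remainder of the section is designed to circumvent. To repair your proof you would need to reinstate the slow-recurrence choice of the perturbation point, replace the identity ``$h_{\gamma_g}=h_{\gamma_2}\circ\mathbb{P}(A)\circ h_{\gamma_1}$'' by the exponential estimates of Proposition \ref{pert-holonomy}, and compare them against the polynomial lower bound on $\sin\beta_k$ --- at which point you are running the paper's argument.

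One further point: after the perturbation your loop for $g$ need not close up at $p$ (the paper explicitly notes $z_0(f_k)\neq z_N(f_k)$ in general); the paper repairs this by pushing the endpoint backward by $f_k^{-n_pk}$ and using the pinching hyperbolicity of $p$ together with Proposition \ref{continuity-su} to control the resulting error (Equations (\ref{orbit})--(\ref{support})). Your proposal does not address this either.
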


As we mentioned before, the proof of this theorem is a consequence of Proposition \ref{discont}, Theorem \ref{invariant} and the arguments in \cite{Ma}.

Let $r\geq 2$ and $*\in \{\mu, \omega\}$. Assume that $f\in \mathcal{B}^r_{*}(M)$, $f$ has a pinching hyperbolic point and $f$ is a $C^r$ discontinuity point for the center Lyapunov exponents.

Let $p$ be a pinching hyperbolic periodic point for $f$. Define $a,b\in \mathit{\mathbb{P}(E^c(p,f))}$ as $a=\mathbb{P}(E^1)$ and $b=\mathbb{P}(E^2)$, where $E^1$ and $E^2$ are the subspaces of $\mathit{E^c(p,f)}$ associated to the eigenvalues of $\mathit{Df^{n_p}_p\vert E^c(p,f)}$.

If we consider the cocycle over $f$ given by $\mathbb{P}(F)$ where $\mathit{F=\eta\cdot Df\vert E^c}$ (see Section 3), then by Proposition \ref{discont} and Theorem \ref{invariant}, the probability measure $m^{+}$, defined in Proposition \ref{nozero}, admits a disintegration $\{m^{+}_x: x\in M\}$ which is bi-invariant and its conditional probabilities depend continuously on the base point $x\in M$, relative to the weak$^{*}$-topology.

In the following toy model, we explain the main ideas and steps in the proof of Theorem \ref{interior}. These ideas are classical and have already appeared, for example, in \cite{AV1,VY}. 

\subsection*{Toy model} Suppose there exists $z\in M$ such that $z\in W^{ss}(p)\cap W^{uu}(p)$. Then, the disintegration of $m^{+}$, $\{m^{+}_x : x\in M\}$ satisfies $$(h^s_{p,z}(f))_{*} m^{+}_p=m^{+}_z\quad \text{and} \quad(h^u_{p,z}(f))_{*} m^{+}_p=m^{+}_z.$$ If $\mathit{supp}\, m^{+}_p=a$, we have that there exists $c\in \mathit{\mathbb{P}(E^c(z))}$ such that $h^s_{p,z}(f)(a)=h^u_{p,z}(f)(a)=c$. 

Since $p$ is periodic and $z$ has no recurrence, there exists $\delta>0$ such that $f^j(z)\notin B_{\delta}(z)$ for every $j\in \mathbb{Z}\setminus \{0\}$ and $f^j(p)\notin B_{\delta}(z)$ for every $j\in \mathbb{Z}$. We are going to consider a perturbation of $f$ supported in $B_{\delta}(z)$ That is, a diffeomorphism which is $C^r$-close enough to $f$ and such that $g(x)=f(x)$ if $x\notin B_{\delta}(z)$. This perturbation is chosen in order to have, $h^s_{p,z}(g)=R_{\beta}\circ h^s_{p,z}(f)$ and $h^u_{p,z}(g)=h^u_{p,z}(f)$. Here, $R_{\beta}$ denotes a rotation of angle $\beta>0$. Recall $\mathbb{P}(F)$ is a cocycle of circle diffeomorphisms over $f$ and then it makes sense to consider rotations in $\mathit{\mathbb{P}(E^c(p,f))}$. 

If $g$ is a $C^r$ discontinuity point for the center Lyapunov exponents, then we have that the measure $m^{+}_g$, given by Proposition \ref{nozero}, admits a disintegration which is bi-invariant and its conditional probabilities depend continuously on the base point $x\in M$. Since $a=\mathit{supp}\, m^{+}_{g,p}$, then $h^s_{p,z}(g)(a)=R_{\beta}(c)$ and $h^u_{p,z}(g)(a)=c$. This is a contradiction and therefore $g$ has to be a $C^r$ continuity point for the center Lyapunov exponents. 

\subsection*{Strategy of the proof} We will use the same argument than in \cite{M} to generalize the ideas in the toy model.

First we find an $su$-path from $p$ to itself with a special node $z$, which is slowly accumulated by the orbits of all the nodes including its own. This is done in Proposition \ref{pivot}. Next, we construct a sequence of $C^r$-perturbations denoted by $f_k$ and supported in $B_{\delta_k}(z)$. The details are given in Lemma \ref{pert}.

Then, we study how the $su$-path and the holonomies change under the variation of the diffeomorphism. The main results are Proposition \ref{nodes} and Proposition \ref{pert-holonomy}. The main observation is that the variation in the holonomies is exponentially small in $k$, although the size of the perturbations $\delta_k$ is polynomial in $k$. This will allow us to break the rigidity given by the existence of continuous sections which are invariant by $\mathbb{P}(F)$ and by the invariant stable and unstable holonomies of $\mathbb{P}(F)$.

We are going to suppose that $f_k$ is a $C^r$ discontinuity point for the center Lyapunov exponents for every $k\in \mathbb{N}$. Therefore, by Proposition \ref{discont} and Theorem \ref{invariant}, for every $k\in \mathbb{N}$ we have a family of disintegrations $\{m^{+}_{k,x} : x\in M\}$ associated to the measure $m^{+}_k$ given by Proposition \ref{nozero}. Moreover, $m^{+}_{k,x}$ depends continuously on the base point $x\in M$. In order to conclude the argument we need the functions $x\mapsto m^{+}_{k,x}$ to be H\"older continuous. We are not able to prove this property, but the problem is solved using the hyperbolicity of $p$ and Proposition \ref{continuity-su}.

Moreover, we prove that for every $k\in \mathbb{N}$ there exist a neighborhood of $f_k$ where all the above are still valid. This will allow us to conclude that $f$ can be $C^r$-approximated by open sets of $C^r$ continuity points for the center Lyapunov exponents. 

In order to simplify the presentation we state here the results of \cite{M} that we use in the proof of the theorem.  
  
\vspace{0.3cm}
\emph{Proof of Theorem \ref{interior}.}

Let $r\geq 2$ and $*\in \{\mu, \omega\}$. Assume that $f\in \mathcal{B}^r_{*}(M)$, $f$ has a pinching hyperbolic point and $f$ is a  $C^r$ discontinuity point for the center Lyapunov exponents.

Let $p$ be a pinching hyperbolic periodic point for $f$. Consider the cocycle $\mathbb{P}(F)$ over $f$ given by $\mathit{F=\eta\cdot Df\vert E^c}$ and the probability measure $m^{+}$ defined in Proposition \ref{nozero}. By Proposition \ref{discont} and Theorem \ref{invariant}, $m^{+}$ admits a disintegration which is bi-invariant and its conditional probabilities depend continuously on the base point $x\in M$, relative to the weak$^{*}$-topology. Suppose $a=\mathit{supp}\, m^{+}_p$.

In the toy model we assume there exists $z\in W^{ss}(p)\cap W^{uu}(p)$, therefore we have an $su$-path for $f$ given by $\zeta=[p,z,p]$. The next proposition will allow us to define an $su$-path for $f$ from $p$ to itself which will generalize the situation we consider in the toy model. 

\begin{prop}[Proposition 8.2 in \cite{ASV}]\label{pivot} Let $f$ be a $C^2$ partially hyperbolic accessible diffeomorphism. Then, for every $x\in M$ there exist an $su$-path, $\zeta=[z_0, ..., z_N]$ with $x=z_0=z_N$, $l\in \{0,...,N\}$ and $c>0$ such that $$dist (f^j(z_i), z_l)\geq \frac{c}{1+j^2},$$ for every $(j,i)\in \mathbb{Z}\times \{0,..., N\}\setminus (0,l)$.
\end{prop}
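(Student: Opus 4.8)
The plan is to exploit accessibility to produce a long $su$-loop through $x$ whose nodes, together with their entire forward and backward orbits, avoid a small neighborhood of one distinguished node $z_l$ at all times except the obviously unavoidable moment $(j,i)=(0,l)$; then a quadratic lower bound for the distance will follow from a compactness/Baire argument once we know that generic nodes and generic times give positive distance. First I would invoke accessibility (and Proposition \ref{uniform}, or directly the results in \cite{AV2}) to fix an $su$-path $\zeta_0=[z_0,\dots,z_N]$ with $z_0=z_N=x$; the number of nodes $N$ and the geometry of the path can be taken uniform. The key point to arrange is the existence of an index $l$ such that the point $z_l$ is \emph{non-recurrent in a strong sense}: no point of the orbit of any node $z_i$ ever returns to $z_l$, apart from the trivial coincidence at $j=0$, $i=l$. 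This is where one uses that $f$ is $C^2$ partially hyperbolic and accessible — by perturbing the path inside its accessibility class (sliding the nodes along strong leaves, which keeps it an $su$-path from $x$ to $x$) one can push the critical node off the countable union of orbit-preimages and orbit-images of the other nodes, since that union is meager.

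The heart of the matter is upgrading "the distance $dist(f^j(z_i),z_l)$ is positive for all $(j,i)\neq(0,l)$" to the quantitative bound $c/(1+j^2)$. For $|j|$ bounded this is immediate: finitely many $(j,i)$, each giving a strictly positive distance, so a uniform $c$ exists. For $|j|\to\infty$ one argues by contradiction: if $dist(f^{j_n}(z_{i_n}),z_l)\cdot(1+j_n^2)\to 0$ along some sequence with $|j_n|\to\infty$, then $f^{j_n}(z_{i_n})\to z_l$ very fast. Passing to a subsequence so that $i_n$ is constant, say $i_n\equiv i$, and using that $z_i$ lies in the stable/unstable-saturated picture near $z_l$, one derives that $z_l$ is accumulated by the orbit of $z_i$, contradicting the non-recurrence property built into the choice of $l$. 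The precise form of the exponent $2$ (rather than any other polynomial rate) comes from how one interpolates between the "bounded $j$" regime and the asymptotic regime — one wants a single decreasing function $j\mapsto c/(1+j^2)$ that lies below all the (finitely many, in each bounded window, and strictly positive) values; since the infimum over each window is positive and one only needs \emph{some} summable-type decay, $1/(1+j^2)$ is a convenient universal choice, and the constant $c$ is then extracted by compactness.

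The step I expect to be the main obstacle is the non-recurrence arrangement: making rigorous that one can choose the distinguished node $z_l$ — and simultaneously keep the path a genuine $su$-loop based at the prescribed point $x$ — so that $z_l$ avoids $\bigcup_{i,j}f^{j}(z_i)$ (with the single exception) and even avoids being an accumulation point of that set. One has to be careful that moving $z_l$ along a strong leaf changes the path segments adjacent to it, hence changes the other nodes' admissible positions too; the cleanest route is probably to fix all nodes except one free node near $z_l$, observe that the set of "bad" positions for that free node (positions landing on, or accumulated by, the orbit of the rest of the configuration) is a countable union of lower-dimensional sets inside the local strong leaf, hence has empty interior, and pick the free node in the complement. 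Once that is done, the metric estimate is a routine compactness argument as sketched above, and the proposition follows.
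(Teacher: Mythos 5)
Your proposal has a genuine and fatal gap in the asymptotic regime. You reduce the bound for large $|j|$ to the claim that one can choose a pivot $z_l$ that is ``not an accumulation point'' of $\bigcup_{i,j} f^j(z_i)$, deduced by Baire category from the set of bad positions being ``a countable union of lower-dimensional sets.'' Both halves of this are wrong. First, the bad set for a fixed radius $c$ is $\bigcup_{(j,i)\neq(0,l)} B\bigl(f^j(z_i), c/(1+j^2)\bigr)$, a countable union of \emph{open balls}, not of lower-dimensional sets; it is open, not meager, and its complement can very well be empty. Second, and more structurally, in the settings this proposition is used for, $f$ is ergodic with respect to a measure of full support, so $\mu$-a.e.\ orbit is dense: there is simply \emph{no} point that fails to be accumulated by the orbit of a typical node. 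The proposition does not assert that $f^j(z_i)$ stays away from $z_l$; it asserts only that any approach is slower than $1/j^2$. Your contradiction argument (``$f^{j_n}(z_i)\to z_l$, contradicting non-accumulation'') therefore proves nothing, because accumulation is not contradictory.

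The actual mechanism in ASV (Proposition 8.2) is measure-theoretic, and the exponent $2$ in $1/(1+j^2)$ is not a cosmetic choice. One slides the candidate pivot through a smooth $d$-dimensional disk $D$ inside a strong leaf (after possibly lengthening the original loop by a sub-loop so as to have a genuine free parameter while keeping $z_0=z_N=x$ fixed). For fixed $c>0$, the set of $z\in D$ violating the bound for some $(j,i)\neq(0,l)$ has Lebesgue measure in $D$ at most $\sum_{(j,i)}\mathrm{Leb}_D\bigl(B(f^j(z_i), c/(1+j^2))\cap D\bigr)\le \mathrm{const}\cdot c^d\sum_j (1+j^2)^{-d}$, which is finite (even for $d=1$) precisely because of the exponent $2$, and is $<\mathrm{Leb}_D(D)$ once $c$ is small. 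A Borel--Cantelli estimate then yields a good pivot; the self-recurrence part ($i=l$, $j\neq 0$) is handled similarly, using regularity of the leaves and that the fixed-point set of each $f^j$ has zero $d$-dimensional measure. Your proposal is missing this quantitative core entirely, and the topological substitute you offer cannot work.
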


Let $\zeta=[z_0, ..., z_N]$ be the $su$-path given by Proposition \ref{pivot} for $f$ and $p$. We are going to find a sequence of perturbations supported around $z_l$. 

In order to guarantee exponential estimations in Proposition \ref{pert-holonomy}, Equation (\ref{orbit}) and Equation (\ref{hypothesis3}) we need to consider a technical constant $\sigma>0$. The value of $\sigma$ depends only on $f$ and we fix it from now on. More precisely, $\sigma=\sigma(\upsilon, \alpha, n_p, N)$, where $\upsilon$ represents the functions in Equation (\ref{ph}) for $f$, $\alpha$ is the exponent for which $f$ is $\alpha$-pinched and $\alpha$-bunched, $n_p$ is the period of $p$ and $N$ the number of nodes in the $su$-path given by Proposition \ref{pivot}.

Define, 
\begin{equation}\label{deltak}
\delta_k=\frac{c}{1+(\sigma\,k)^2},
\end{equation}
for every $k\geq1$, where $c>0$ is given by Proposition \ref{pivot}.  

\begin{lemma}[Lemma 4.4 in \cite{M}]\label{pert} Let $r\geq 2$ and $*\in \{\mu, \omega\}$. There exist $\epsilon_0>0$, $k_0\in \mathbb{N}$ and $C_0>0$ such that for any $0<\epsilon<\epsilon_0$ and $k\geq k_0$, there exists $f_k\in \mathcal{B}^r_{*}(M)$ which is $\epsilon$ $C^r$-close to $f$ and such that
\begin{enumerate}[label=\emph{(\alph*)}]
\item $f_k(x)=f(x)$ if $x\notin B_{\delta_k}(z_l)$,
\item $f_k(z_l)=f(z_l)$ and
\item $Df_k(z_l)=Df(z_l) \circ A_{\beta_k}$ where $\sin\, \beta_k=C_0\, \delta_k^{r-1} \epsilon$ and $A_{\beta_k}$ is the linear map from $TM_{z_l}$ to $TM_{z_l}$ given in coordinates $TM=E^s\oplus E^c\oplus E^u$ by $$\begin{pmatrix}
Id_s & 0 & 0 \\
0  & R_{\beta_k} & 0 \\
0 & 0 & Id_u
\end{pmatrix} $$ with $Id_{**}\colon E^{**}_{z}\to E^{**}_{z}$ being the identity map for $**\in \{s,u\}$ and $R_{\beta_k}$ the counterclockwise rotation of angle $\beta_k$ in some (symplectic) base $\{e_1,f_1\}$ of $E^c(z)$. 
\end{enumerate}
Moreover, if we fix $\epsilon>0$ and consider the sequence $\{f_k\}_{k\geq k_0}$, then $f_k\to f$ in the $C^1$ topology when $k\to \infty$. 
\end{lemma}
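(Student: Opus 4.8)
The plan is to realize $f_k$ as a left‑composition $f_k=\phi_k\circ f$, where $\phi_k$ is a $*$‑preserving $C^r$ diffeomorphism which is the identity outside a small ball around $f(z_l)$, fixes $f(z_l)$, and has a prescribed linear part there. (Equivalently one may pre‑compose, $f_k=f\circ\phi_k$, with $\phi_k$ supported near $z_l$; I will phrase the sketch for the pre‑composition version since it makes the $1$‑jet condition (c) read off literally in $E^c(z_l)$.) With $f_k=f\circ\phi_k$, $\phi_k$ supported in $B_{\delta_k}(z_l)$, fixing $z_l$, and $D\phi_k(z_l)=A_{\beta_k}$, conditions \emph{(a)}--\emph{(c)} are immediate: \emph{(a)} holds because $f_k=f$ wherever $\phi_k=\mathrm{Id}$, i.e.\ off $\mathrm{supp}(\phi_k-\mathrm{Id})\subset B_{\delta_k}(z_l)$; \emph{(b)} is $f_k(z_l)=f(\phi_k(z_l))=f(z_l)$; and \emph{(c)} is the chain rule $Df_k(z_l)=Df(z_l)\circ D\phi_k(z_l)=Df(z_l)\circ A_{\beta_k}$. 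So everything reduces to constructing $\phi_k$ with the right invariance, the right $1$‑jet at $z_l$, and $\|\phi_k-\mathrm{Id}\|_{C^r}$ under control.

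\textbf{Construction of $\phi_k$.}
First I would fix adapted coordinates near $z_l$: a chart $\psi$ with $\psi(z_l)=0$ whose differential carries the splitting $E^s(z_l)\oplus E^c(z_l)\oplus E^u(z_l)$ onto the corresponding coordinate blocks (sending the symplectic basis $\{e_1,f_1\}$ of $E^c(z_l)$ to a standard orthonormal pair) and in which the invariant volume $\mu$ (resp.\ the form $\omega$) becomes the standard one. Such a chart exists: pick first a linear chart with the correct differential — possible because for $*=\omega$ the splitting $E^c\oplus(E^s\oplus E^u)$ is $\omega$‑orthogonal and symplectic, so it admits an adapted symplectic basis — and then correct it by a diffeomorphism tangent to the identity at $z_l$ via Moser's trick (resp.\ Darboux's theorem with control of the $1$‑jet), which does not disturb the differential at $z_l$. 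In these coordinates, in the volume‑preserving case let $\phi_k$ be the time‑one map of the flow of $X_k=\beta_k\,\rho_k\cdot R$, where $R=-y_c\partial_{x_c}+x_c\partial_{y_c}$ is the infinitesimal rotation of the center plane (divergence‑free, with the $E^s$ and $E^u$ directions fixed) and $\rho_k$ is a bump function, depending only on the distance to $z_l$, equal to $1$ at $z_l$ and supported in $B_{\delta_k}(z_l)$; since $R$ is tangent to the level sets of $\rho_k$, $X_k$ is still divergence‑free, so $\phi_k$ preserves volume, and since $X_k(z_l)=0$ with $DX_k(z_l)=\beta_k R$ we get $D\phi_k(z_l)=\exp(\beta_k R)=A_{\beta_k}$. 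In the symplectic case take instead the Hamiltonian flow of $H_k=\rho_k\cdot\tfrac{\beta_k}{2}(x_c^2+y_c^2)$: it is supported in $B_{\delta_k}(z_l)$, vanishes to second order at $z_l$ with Hessian $\beta_k(dx_c^2+dy_c^2)$, so its time‑one map fixes $z_l$ with linear part the rotation by $\beta_k$ on $E^c(z_l)$ and the identity on $E^s\oplus E^u$, again equal to $A_{\beta_k}$ (orienting the coordinates so that the rotation is counterclockwise).

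\textbf{The $C^r$ estimate and conclusion.}
Standard bump estimates give $\|D^j\rho_k\|\lesssim\delta_k^{-j}$, so by Leibniz $\|H_k\|_{C^j}\lesssim\beta_k\delta_k^{2-j}$ (resp.\ $\|X_k\|_{C^j}\lesssim\beta_k\delta_k^{1-j}$), whence the generating vector field has $C^j$‑norm $\lesssim\beta_k\delta_k^{1-j}$, and by Gronwall applied to the variational equations of the flow (no loss of derivatives) $\|\phi_k-\mathrm{Id}\|_{C^j}\lesssim\beta_k\delta_k^{1-j}$ for $j\le r$, uniformly once $\beta_k$ is small. Choosing $\sin\beta_k=C_0\,\delta_k^{r-1}\epsilon$ gives $\beta_k\asymp C_0\delta_k^{r-1}\epsilon$, hence $\|\phi_k-\mathrm{Id}\|_{C^r}\lesssim C_0\epsilon$ while $\|\phi_k-\mathrm{Id}\|_{C^j}\lesssim C_0\epsilon\,\delta_k^{r-j}\to 0$ for every $j<r$. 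From the composition estimate $\|f\circ\phi_k-f\|_{C^r}\lesssim_f\|\phi_k-\mathrm{Id}\|_{C^r}$, up to error terms governed by the lower‑order norms of $\phi_k-\mathrm{Id}$ and the modulus of continuity of $D^rf$ (all tending to $0$), one gets $\|f_k-f\|_{C^r}<\epsilon$ after fixing $C_0$ small and $k_0$ large; the same computation at order $1$ gives $\|f_k-f\|_{C^1}\lesssim\beta_k\to 0$, which is the final assertion. Finally, since $f_k\to f$ in $C^1$ and partial hyperbolicity, accessibility (for $2$‑dimensional center, by Proposition~\ref{uniform} and \cite{AV2}), the $\alpha$‑pinching and the $\alpha$‑bunching conditions are all $C^1$‑open, one may shrink $\epsilon_0$ and enlarge $k_0$ so that every such $f_k$ belongs to $\mathcal{B}^r_*(M)$.

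\textbf{Main obstacle.}
The delicate point is not any single ingredient but the bookkeeping that couples them: producing a perturbation that is \emph{exactly} volume‑preserving (resp.\ symplectic) and realizes the prescribed $1$‑jet at $z_l$, while keeping its $C^r$‑size at the critical order $\beta_k\delta_k^{1-r}$ — so that the admissible angle $\beta_k\sim\delta_k^{r-1}\epsilon$ is as large as possible with the $C^1$‑size still vanishing. This forces the coordinate normalization to preserve the $1$‑jet and forces the bump/flow estimates to be arranged so that composition with the fixed, only $C^r$, map $f$ costs no derivative; it is exactly the $\delta_k^{r-1}$ factor produced here that later makes the exponentially small holonomy estimates of Proposition~\ref{pert-holonomy} possible.
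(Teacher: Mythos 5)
Your overall strategy is the right one and, as far as I can tell, matches the construction in \cite{M}: a $*$-preserving diffeomorphism supported in $B_{\delta_k}(z_l)$, built from a compactly supported divergence-free field (resp.\ Hamiltonian flow) with the scaling $\sin\beta_k=C_0\,\delta_k^{r-1}\epsilon$ chosen precisely so that $\|\phi_k-\mathrm{Id}\|_{C^r}\lesssim C_0\epsilon$ while $\|\phi_k-\mathrm{Id}\|_{C^1}\lesssim\beta_k\to 0$, followed by composition with $f$ and an appeal to the $C^1$-openness of the defining properties of $\mathcal B^r_*(M)$.

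The place where the sketch has a concrete gap is the $C^r$ composition estimate, and it is tied to your choice of pre-composition. With $f_k=f\circ\phi_k$, Fa\`a di Bruno produces, among the terms of $D^r(f\circ\phi_k)-D^rf$, the quantity $\bigl(D^rf(\phi_k(x))-D^rf(x)\bigr)\cdot(D\phi_k(x))^{\otimes r}$. Since $f$ is only $C^r$ (not $C^{r+1}$), this is controlled only by $\omega_{D^rf}\bigl(\|\phi_k-\mathrm{Id}\|_{C^0}\bigr)$, the modulus of continuity of $D^rf$, with $\|\phi_k-\mathrm{Id}\|_{C^0}\lesssim\beta_k\delta_k=C_0\delta_k^{r}\epsilon$. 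This does tend to $0$, but it does not scale linearly with $\epsilon$: for, say, $\omega_{D^rf}(t)\asymp t^{1/2}$ and $k=k_0$ fixed, $\omega_{D^rf}(C_0\delta_{k_0}^r\epsilon)\asymp\sqrt{\epsilon}\gg\epsilon$ as $\epsilon\to0$, so one cannot conclude $\|f_k-f\|_{C^r}<\epsilon$ for all $\epsilon<\epsilon_0$ and all $k\geq k_0$ with $k_0$ independent of $\epsilon$. Thus "fixing $C_0$ small and $k_0$ large'' does not finish that step, and the two forms of composition are not in fact equivalent at the critical order. The clean fix is exactly the left-composition $f_k=\phi_k\circ f$ that you mention at the outset but then set aside: there every term in Fa\`a di Bruno evaluates the derivatives of $f$ at the unperturbed point $f(x)$, so $\|f_k-f\|_{C^r}\lesssim_{\|f\|_{C^r}}\|\phi_k-\mathrm{Id}\|_{C^r}\lesssim C_0\epsilon$ with no modulus-of-continuity loss; the only price is that the prescribed linear part at $f(z_l)$ becomes the conjugate $Df(z_l)\circ A_{\beta_k}\circ Df(z_l)^{-1}$ (still a small symplectic/unimodular map near the identity, realizable by the same Hamiltonian, resp.\ divergence-free, device), which recovers condition \emph{(c)} via the chain rule $Df_k(z_l)=D\phi_k(f(z_l))\circ Df(z_l)=Df(z_l)\circ A_{\beta_k}$.
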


The properties of the sequence $f_k$ given by Lemma \ref{pert} allow us to understand how the dynamics is changing. The next proposition studies how the $su$-path $\zeta$ given by Proposition \ref{pivot} varies with $f$. 

\begin{prop}[Proposition 4.8 in \cite{M}]\label{nodes} If $\zeta=[z_0,...,z_N]$ is the $su$-path given by Proposition \ref{pivot} for $f$ and $p$ and $f_k$ is given by Lemma \ref{pert} for some $\epsilon>0$, then there exist $C_1>0$, $\tau\in (0,1)$ and $k_1\in \mathbb{N}$ such that for every $k\geq k_1$ there exists an $su$-path for $f_k$, $\zeta (f_k)=[z_{0}(f_k),...,z_{N}(f_k)]$, with $z_{0}(f_k)=z_0=p$ and such that $$dist(z_i, z_{i}(f_k))< C_1\, \tau^{\sigma_1 k},$$ for every $i\in \{1,..., N\}$, where $\sigma_1=\sigma\,\alpha^{N}$.
\end{prop}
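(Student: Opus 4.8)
The plan is to track the $su$-path $\zeta=[z_0,\dots,z_N]$ node by node under the perturbation $f\rightsquigarrow f_k$ from Lemma \ref{pert}, using that the only place where $f_k$ differs from $f$ is inside the tiny ball $B_{\delta_k}(z_l)$. First I would fix the stable/unstable leaf through each node. Since $f_k=f$ outside $B_{\delta_k}(z_l)$, the leaves $W^s(x,f_k)$ and $W^u(x,f_k)$ coincide with those of $f$ as long as the relevant segment of orbit never enters $B_{\delta_k}(z_l)$; and even when it does, the continuous dependence of strong (un)stable leaves on the diffeomorphism gives a quantitative estimate. The key point, exactly as in Proposition \ref{pivot}, is that $\zeta$ was chosen so that $\mathrm{dist}(f^j(z_i),z_l)\ge c/(1+j^2)$ for all $(j,i)\ne(0,l)$, and $\delta_k=c/(1+(\sigma k)^2)$ is much smaller than this lower bound when $\sigma>1$; so the forward and backward orbits of each node $z_i$ ($i\ne l$) stay a definite distance away from the support of the perturbation for the first $\sim\sigma k$ iterates, and the perturbation only ``sees'' those orbits after a time comparable to $\sigma k$.

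Next I would build $\zeta(f_k)$ inductively. Start with $z_0(f_k)=z_0=p$, which is possible because $f_k=f$ near $p$ (since $f^j(p)\notin B_{\delta_k}(z_l)$ for the relevant range, or more simply $p=z_0=z_N$ is away from $z_l$ for $k$ large). Assuming $z_i(f_k)$ has been constructed $\tau^{\sigma_1 k}$-close to $z_i$, define $z_{i+1}(f_k)$ as the point on $W^{*}(z_i(f_k),f_k)$ nearest to $z_{i+1}$ (where $*\in\{s,u\}$ matches the $i$-th leg of $\zeta$), matching the original geometric data of that leg. The estimate $\mathrm{dist}(z_{i+1},z_{i+1}(f_k))<C_1\tau^{\sigma_1 k}$ then comes from two inputs: (i) the endpoint $z_i(f_k)$ of the previous leg is already exponentially close to $z_i$, and (ii) the leaf $W^{*}(z_i(f_k),f_k)$ is exponentially close, on the bounded-length scale $N$, to $W^{*}(z_i,f)$. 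For (ii) one uses the standard fact that the strong (un)stable holonomy/leaf depends on the diffeomorphism in a way controlled by the $C^1$-distance on the orbit segment that governs it; because that orbit segment avoids $B_{\delta_k}(z_l)$ for the first $\sim\sigma k$ steps and the leaves contract exponentially, the discrepancy one picks up is $O(\nu^{\sigma k})$ for the relevant contraction rate $\nu<1$ from Equation (\ref{ph}), and the extra factor $\alpha^N$ in $\sigma_1=\sigma\alpha^N$ accounts for the loss incurred across the at most $N$ legs (each leg can amplify the previous error, but only by a bounded factor, and the $\alpha$-pinching/$\alpha$-bunching controls how the transverse $E^c$-data propagates). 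Choosing $\tau\in(0,1)$ to be the worst of these contraction rates and $C_1,k_1$ to absorb the bounded multiplicative constants over the $N$ legs finishes the induction.

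The last bookkeeping point is that $z_N(f_k)$ must equal $z_N=z_0=p$, not merely be close to it: the $su$-path must genuinely close up. This is where I would use that $p$ is a hyperbolic periodic point (as in the statement of Theorem \ref{interior} and the discussion after Proposition \ref{continuity-su}). Concretely, after running the construction one lands at a point $z_N(f_k)$ accessible from $p$ along an $su$-path of bounded combinatorics and exponentially close to $p$; since $p$ has a local product structure coming from its own (strong) stable and unstable manifolds, one can append a short final leg inside $W^s_{loc}(p)$ and then $W^u_{loc}(p)$ (or invoke Proposition \ref{continuity-su} directly with $x_k=y_k=p$) to correct the endpoint back to $p$ at the cost of only another exponentially small perturbation of the path, which gets absorbed into $C_1\tau^{\sigma_1 k}$. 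Alternatively, and more in line with \cite{M}, one simply observes that the perturbation in Lemma \ref{pert} fixes $z_l$ and a whole neighborhood's worth of orbit data, so with the right choice of the nearest-point projections the path closes exactly.

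The main obstacle I anticipate is item (ii): getting the \emph{exponential} (in $k$) rate rather than a mere $o(1)$ rate for the variation of the nodes. The naive continuous-dependence statement only gives that $z_i(f_k)\to z_i$, with a rate governed by the $C^1$-size $\epsilon$ of the perturbation, which here is fixed. The exponential gain must come entirely from the separation estimate of Proposition \ref{pivot} together with the polynomial shrinking $\delta_k=c/(1+(\sigma k)^2)$: the orbit of each node does not feel the perturbation until time $\sim\sigma k$, and by then the leaves have contracted by $\nu^{\sigma k}$. Making this quantitative — i.e., carefully choosing $\sigma$ large enough (depending on $\upsilon,\alpha,n_p,N$, exactly the dependence announced before Equation (\ref{deltak})) so that the ``hitting time'' $\sim\sigma k$ beats the polynomial size of $\delta_k$ and feeds a genuine exponential bound through all $N$ legs with the $\alpha^N$ correction — is the delicate part, and it is precisely the content one imports from Proposition 4.8 of \cite{M}.
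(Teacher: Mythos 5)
Your high-level strategy (tracking the path node by node, using the orbit-separation estimate of Proposition \ref{pivot} to guarantee that the relevant orbit segments avoid $B_{\delta_k}(z_l)$ for the first roughly $\sigma k$ iterates, and then extracting an exponential gain from the leaf contraction over that hitting time) is indeed the mechanism behind Proposition 4.8 of \cite{M}, and you correctly identify the crucial point that the exponential bound is not a consequence of naive continuous dependence. Two specific points, however, are off.

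First, you require the perturbed path to close up, that is $z_N(f_k)=p$, and you spend a paragraph arranging this. That is a misreading of the proposition: the statement only asks for $z_0(f_k)=p$ and for the remaining nodes to be close, and the paper explicitly emphasizes, immediately after Proposition \ref{nodes}, that ``the $su$-path for $f_k$ given by this proposition is not necessarily closed. We can have $z_0(f_k)\neq z_N(f_k)$.'' Imposing closure is not only unnecessary but problematic for the statement as written: your suggestion of appending extra legs inside $W^s_{loc}(p)$ and $W^u_{loc}(p)$ would change the combinatorics of the path (more than $N$ nodes), so the resulting object would not match the indexing $[z_0(f_k),\dots,z_N(f_k)]$ of the proposition. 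The lack of closure is precisely what is repaired later in the proof of Theorem \ref{interior} via $q_k=f_k^{-n_pk}(z_N(f_k))$, Equation (\ref{orbit}), and Proposition \ref{continuity-su}.

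Second, your account of where the factor $\alpha^N$ in $\sigma_1=\sigma\alpha^N$ comes from is internally inconsistent. You write that ``each leg can amplify the previous error, but only by a bounded factor'' and that such factors ``get absorbed'' into $C_1$; but a bounded multiplicative factor per leg would indeed be absorbed into $C_1$ and would leave the exponent $\sigma$ untouched, so it cannot explain the appearance of $\alpha^N$. The actual loss is in the exponent: when passing from the $i$-th node to the $(i+1)$-st, the dependence of the strong stable/unstable leaf (and of the point on it that you project to) on the base point is only H\"older with exponent tied to $\alpha$, so an error of size $\tau^{\beta k}$ at node $i$ propagates to an error of roughly $\tau^{\alpha\beta k}$ at node $i+1$. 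Iterating this over $N$ legs starting from the initial error $\tau^{\sigma k}$ produced by the hitting-time argument gives $\tau^{\alpha^N\sigma k}=\tau^{\sigma_1 k}$. This power-law degradation of the exponent cannot be absorbed into $C_1$ or into the choice of $\tau$, and tracking it correctly is exactly what forces the definition $\sigma_1=\sigma\alpha^N$ and, in turn, the choice of $\sigma$ in Equation (\ref{deltak}) large enough so that $\sigma_1$ still beats the other constants (compare Equations (\ref{orbit}) and (\ref{hypothesis3})).
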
 
Although $\zeta$ is a closed $su$-path, $z_0=z_N=p$, the $su$-path for $f_k$ given by this proposition is not necessarily closed. We can have $z_{0}(f_k)\neq z_{N}(f_k)$.

Fix some $\epsilon>0$ small enough and let $f_k$ be given by Lemma \ref{pert} for this $\epsilon$. Then, in the notation of Proposition \ref{nodes}, define $$\zeta_1=[z_0,...,z_l],\quad \quad \zeta_2=[z_N,..., z_l],$$ $$\zeta_1(f_k)=[p,...,z_l(f_k)]\quad \text{and} \quad \zeta_2(f_k)=[z_N(f_k),...,z_l(f_k)].$$  

Analogous to Definition \ref{hol}, we can define invariant stable and unstable holono\-mies for the linear cocycle $\mathit{F'=Df\vert E^c}$. If $f\in \mathcal{B}^r_{*}(M)$, by Section 3 of \cite{ASV}, there exist invariant stable and invariant unstable holonomies associated to $F'$. We are going to denote these holonomies by $H^s_{x,y}$ and $H^u_{x,y}$. Moreover, if $h^s_{x,y}$ and $h^u_{x,y}$ denote the holonomies for $\mathbb{P}(F')$, then $h^s_{x,y}=\mathbb{P}(H^s_{x,y})$ and $h^u_{x,y}=\mathbb{P}(H^u_{x,y})$. Recall $\mathbb{P}(F')=\mathbb{P}(F)$ if $\mathit{F=\eta\cdot F'}$.

For $i\in \{1,2\}$, $H_{\zeta_i}$ will denote the holonomy for $F'$ defined by $\zeta_i$ and $H_{\zeta_i(f_k)}$ the holonomy for $F'_k=Df_k\vert E^c(f_k)$ defined by $\zeta_i(f_k)$. Then, $$H_{\zeta_1}\colon \mathit{E^c(p)}\to \mathit{E^c(z_l, f)}, \quad \quad H_{\zeta_2}\colon \mathit{E^c(p)}\to \mathit{E^c(z_l, f)},$$ $$H_{\zeta_1(f_k)}\colon \mathit{E^c(p)}\to \mathit{E^c(z_l(f_k), f_k)}\quad \text{and} \quad H_{\zeta_2(f_k)}\colon \mathit{E^c(z_N(f_k), f_k)}\to \mathit{E^c(z_l(f_k), f_k)}.$$

We can suppose $z_{l-1}\in W^s(z_l)$ and $z_l\in W^u(z_{l+1})$.

\begin{prop}[Corollary 4.10 in \cite{M}]\label{pert-holonomy} There exist $C>0$, $\lambda\in (0,1)$ and $K\in \mathbb{N}$ such that for every $k\geq K$, $c\in \mathit{E^c(p)}$ and $c_k\in \mathit{E^c(z_N(f_k),f_k)}$ we have
\begin{equation}\label{rotation}
d( R_{\beta_k}^{-1}\circ H_{\zeta_1}(c), H_{\zeta_1(f_k)}(c))\leq C\, \lambda^k,
\end{equation}
and
\begin{equation}\label{direct}
d(H_{\zeta_2}(c), H_{\zeta_2(f_k)}(c_k))\leq C\, \lambda^k + C\, d(c, c_k),
\end{equation}
where $R_{\beta_k}\colon \mathit{E^c(z_l,f)}\to \mathit{E^c(z_l,f)}$ is the rotation of angle $\beta_k>0$ defined by Lemma \ref{pert}.
\end{prop}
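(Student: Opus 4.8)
This is Corollary~4.10 of \cite{M}. Since the construction there uses only that $f\in\mathcal{B}^r_{*}(M)$ with the stated $\alpha$-pinching and $\alpha$-bunching, that $p$ is a pinching hyperbolic periodic point whose orbit the perturbations $f_k$ leave untouched, and the conclusions of Proposition~\ref{pivot}, Lemma~\ref{pert} and Proposition~\ref{nodes}, one option is simply to check that it transfers verbatim to both $*\in\{\mu,\omega\}$. Below I outline the mechanism.

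I would use that for $f\in\mathcal{B}^r_{*}(M)$ the linear holonomies of $F'=Df|E^c$ are exponentially convergent limits: $H^s_{x,y}=\lim_{n\to\infty}\big(Df^n(y)|E^c\big)^{-1}\circ\big(Df^n(x)|E^c\big)$ for $y\in W^s(x)$, and symmetrically with backward iterates for $H^u$, the convergence rate being governed by the $\alpha$-bunching constants. The plan is to fix a truncation time $T_k=\lfloor\sigma k\rfloor$, replace each elementary holonomy appearing in the finite compositions $H_{\zeta_1}, H_{\zeta_1(f_k)}, H_{\zeta_2}, H_{\zeta_2(f_k)}$ by its truncation at time $T_k$ (incurring an exponentially small cost per leg) and compare the truncations for $f$ and for $f_k$ directly. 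Two facts make this possible. First, by Proposition~\ref{pivot} and the choice $\delta_k=c/(1+(\sigma k)^2)$ in (\ref{deltak}) one has $dist(f^j(z_i),z_l)\ge c/(1+j^2)\ge\delta_k$ for all $|j|\le\sigma k$ and $(j,i)\neq(0,l)$; hence during the first $T_k$ iterates the orbit of every node — for $f$, and, after slightly shrinking the window, for $f_k$, whose nodes are exponentially close to those of $f$ by Proposition~\ref{nodes} — stays outside $B_{\delta_k}(z_l)$, so $Df_k$ coincides with $Df$ along it except at the pivot node $z_l$ at time $0$, where $Df_k(z_l)=Df(z_l)\circ A_{\beta_k}$ by Lemma~\ref{pert}. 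Second, since $A_{\beta_k}$ is block-diagonal for $E^s\oplus E^c\oplus E^u$ at $z_l$ and these orbit segments avoid $B_{\delta_k}(z_l)$ for about $\sigma k$ iterates, the center bundles $E^c(f_k)$ and $E^c(f)$ differ along them by an exponentially small amount, and $E^c(p,f_k)=E^c(p,f)$ exactly because $f_k=f$ near the whole orbit of $p$.

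For the last (stable) leg of $\zeta_1$, the rightmost factor of $Df_k^n(z_l)$ is $Df_k(z_l)=Df(z_l)\circ A_{\beta_k}$, so $Df_k^n(z_l)|E^c=\big(Df^n(z_l)|E^c\big)\circ R_{\beta_k}$ for $n\le T_k$; inverting and inserting this into the limit defining $H^s$ extracts exactly the prefactor $R_{\beta_k}^{-1}$ on the truncated $f_k$-holonomy, while every other elementary holonomy entering $H_{\zeta_1(f_k)}$ differs from its $f$-counterpart only through the (exponentially small) node displacements and bundle discrepancies above; composing along the at most $N$ legs gives (\ref{rotation}). For $\zeta_2$ the last leg into $z_l$ is unstable, hence determined by the backward orbit of $z_l$, which by Proposition~\ref{pivot} meets neither $B_{\delta_k}(z_l)$ (for about $\sigma k$ steps) nor the point $f(z_l)$ — the only place where $Df_k^{-1}$ differs from $Df^{-1}$ — so the perturbed derivative at $z_l$ never enters and no rotation appears; the residual discrepancy is again exponentially small, and the extra term $C\,d(c,c_k)$ in (\ref{direct}) is just the Lipschitz dependence of $H_{\zeta_2(f_k)}$ on its argument, needed because $z_N(f_k)$ need not equal $z_N=p$ so that $c$ and $c_k$ lie in distinct fibers.

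The hard part is the bookkeeping of constants. Truncation errors decay like $\lambda_0^{\sigma k}$ for some $\lambda_0<1$, but the node displacements $C_1\tau^{\sigma_1 k}$ (with $\sigma_1=\sigma\alpha^N$) and the bundle discrepancies, once propagated through a composition of about $\sigma k$ linear maps whose norms are only controlled exponentially, pick up an amplification factor that is itself exponential in $\sigma k$. For the combined error to remain of the form $C\lambda^k$ with $\lambda<1$ one must take $\sigma=\sigma(\upsilon,\alpha,n_p,N)$ — and the constant $\tau$ of Proposition~\ref{nodes} — small enough; this is exactly the role of the technical constant $\sigma$ fixed (before (\ref{deltak})) in terms of the data of $f$, and carrying out the precise inequalities is the content of Section~4 of \cite{M}. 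With $\sigma$ so chosen, summing the finitely many elementary estimates along $\zeta_1$ and $\zeta_2$ yields (\ref{rotation}) and (\ref{direct}).
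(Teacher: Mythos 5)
The paper states this as Corollary~4.10 of \cite{M} and gives no in-paper proof, so there is nothing here to compare against line by line. Your reconstruction correctly identifies the mechanism that \cite{M} uses: truncate the holonomy limits at time $\sim\sigma k$, invoke Proposition~\ref{pivot} and the choice of $\delta_k$ in (\ref{deltak}) so that the relevant orbit segments of all nodes stay outside $B_{\delta_k}(z_l)$, leaving $Df_k(z_l)=Df(z_l)\circ A_{\beta_k}$ as the single place where the perturbed and unperturbed derivatives differ, extract $R_{\beta_k}^{-1}$ from the rightmost factor of the last stable leg of $\zeta_1$, observe that the backward orbit governing the unstable leg of $\zeta_2$ never sees that factor, and choose $\sigma$ so that the exponential truncation and node-displacement errors dominate the polynomial decay of $\beta_k$ -- which is exactly the approach of Section~4 of \cite{M}, and you appropriately defer the constant bookkeeping there.
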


Here the distance between points in different fibers of $TM$ is defined, as before, using parallel transport. More precisely, for every $x,y\in M$ close enough, denote $\pi_{x,y}:T_{x}M\longrightarrow T_{y}M$ the parallel transport along $\chi$, where $\chi$ is the geodesic satisfying $dist(x,y)=\text{length}(\chi)$. Then, given two points $(x,v)$ and $(y,w)$ in $TM$ define $$d((x,v),(y,w))=dist(x,y) + \left\|\pi_{x,y}(v)- w\right\|.$$

Suppose $f_k$ is a $C^r$ discontinuity point for the center Lyapunov exponents for every $k$, then we can define $m_{f_k}^{+}$ by Proposition \ref{nozero} and it will admit a disintegration which is bi-invariant and its conditional probabilities depend continuously on the base point $x\in M$. This is a consequence of Proposition \ref{discont} and Theorem \ref{invariant}.

Let $z_N(f_k)$ be given by Proposition \ref{nodes} and $$q_k=f_k^{-n_p k}(z_N(f_k)).$$ Then, there exists $C_2>1$ such that 
\begin{equation}\label{orbit}
dist(p,q_k)\leq C_2^{n_p k} dist(p, z_N(f_k)) \leq C_1\, (C_2^{n_p} \tau^{\sigma_1})^{k}.
\end{equation} Here $C_2$ depends on the functions in Equation (\ref{ph}) and $\sigma_1=\sigma\,\alpha^{N}$. The constant $\sigma$ was chosen in order to have this expression going to zero as $k\to \infty$. 

Now, we consider the $su$-paths given by Proposition \ref{continuity-su} for $f_k\to f$, $x=y=p$, $x_k=p$ and $y_k=q_k$ for every $k$. Therefore, there exist a subsequence, that we will continue to denote $k$ to simplify the notation, $su$-paths for $f_k$ denoted by $\gamma_k$ joining $p$ to $q_k$ and an $su$-path for $f$ denoted by $\gamma$ joining $p$ to itself. 

Moreover, by Corollary \ref{bound-sequence}, there exists $C>0$ such that $$d(h_{\gamma}(a), h_{\gamma_{k}}(a))\leq \psi(k), $$ where $\psi(k)\to 0$ as $k\to \infty$. Since the disintegrations of $m^{+}$ and $m^{+}_{f_k}$ are bi-invariant and $a=\mathit{supp}\, m^{+}_p=\mathit{supp}\, m^{+}_{f_k,p}$, we conclude that for every $\epsilon>0$ there exists $K_1\in \mathbb{N}$ such that for every $k\geq K_1$ we have,
\begin{equation}\label{cerca}
d(\mathit{supp}\, m^{+}_{f_k,q_k}, a)< \epsilon.
\end{equation} 

Since $p$ is a pinching hyperbolic periodic point, there exist $C_3>0$, $\theta_0>0$, $\rho_0\in (0,1)$ such that for the subspaces $E^1$, $E^2$ of $\mathit{E^c(p)}$ associated to the eigenvalues of $\mathit{Df^{n_p}_p\vert E^c(p)}$ we have: 

For every $F^1$ and $F^2$ one-dimensional subspaces of $E^c(p)$ with 
\begin{equation}\label{hypothesis2}
\mathit{\max\{ \angle(E^1,F^1), \angle(E^2, F^2)\}}<\theta_0,
\end{equation}
then
\begin{equation}\label{unstable2}
\angle( Df^{n_pj}(E^1), Df^{n_pj}(F^1))\leq C_3\, \rho_0^j
\end{equation} 
 and
\begin{equation}
\angle( Df^{-n_pj}(E^2), Df^{-n_pj}(F^2))\leq C_3\, \rho_0^j,  
\end{equation} 
for every $j\geq 0$.

Recall $q_k=f_k^{-n_p k}(z_N(f_k)).$ Let $d_k=\mathit{supp}\, m^{+}_{f_k,q_k}$ and $c_k=\mathbb{P}(F^{n_p}_k(q_k)) (d_k).$ Then, by Remark \ref{finv}, $c_k=\mathit{supp}\, m^{+}_{f_k, z_N(f_k)}$ and by Equation (\ref{cerca}), $dist(d_k,a)<\theta_0$ if $k$ is big enough. 

Therefore, there exists $C_4>0$ such that
\begin{equation}\label{hypothesis3}
\begin{aligned}
d(a,c_k)&=d(\mathbb{P}(F^{n_p k}(p))(a), \mathbb{P}(F^{n_p k}_k(q_k))(d_k))\\
        &\leq C_1\; \rho^k + C_1\, (C_4^{n_p}\tau^{\sigma_1})^k.\\
\end{aligned}
\end{equation} 

In order to obtain the estimations in Equation (\ref{hypothesis3}) we use triangular inequality to get two terms: the first one is bounded using Equation (\ref{unstable2}) and the second one using that $\mathbb{P}(F_k)$ is Lipschitz. When $k$ is big enough, we can take the constant $C_4$ to be uniform for every $k$, depending only on $f$. The constant $\sigma$ was chosen to have the expression on the second term going to zero exponentially fast as $k\to \infty$.

Therefore, there exist $\widehat{C}>0$ and $\rho\in (0,1)$ such that for every $k$ big enough,
\begin{equation}\label{support}
d(a, \mathit{supp}\, m^{+}_{f_k,z_N(f_k)})< \widehat{C}\, \rho^k.
\end{equation} 

Summarizing, by Equation (\ref{rotation}), Equation (\ref{direct}) and Equation (\ref{support}) we have the following estimations.

There exist $C>0$, $\lambda\in (0,1)$ and $K\in \mathbb{N}$ such that the following equations are valid for every $k\geq K$:
\begin{equation}\label{rotation-fin}
d( R_{\beta_k}^{-1}\circ H_{\zeta_1}(a), H_{\zeta_1(f_k)}(a))\leq C\, \lambda^k,
\end{equation}
and
\begin{equation}\label{direct-fin}
d(H_{\zeta_2}(a), H_{\zeta_2(f_k)}(c_k))\leq C\, \lambda^k,
\end{equation}
where $R_{\beta_k}\colon \mathit{E^c(z_l,f)}\to \mathit{E^c(z_l,f)}$ is the rotation of angle $\beta_k>0$ defined by Lemma \ref{pert}, $a=\mathit{supp}\, m^{+}_p=\mathit{supp}\,m^{+}_{f_k,p}$ and $c_k=\mathit{supp}\, m^{+}_{f_k,z_N(f_k)}$. 

For $i\in \{1,2\}$, let $h_{\zeta_i(f_k)}$ denote the holonomy for $\mathbb{P}(F_k)$ defined by $\zeta_i(f_k)$. Then, $h_{\zeta_i(f_k)}=\mathbb{P}(H_{\zeta_i(f_k)})$. 

By Equation (\ref{deltak}) and Lemma \ref{pert}, $\frac{\lambda^k}{\sin^2 \beta_{k}}\rightarrow 0$ when $k\to \infty.$ This is true because $\beta_k$ depends polynomially on $k$. This property, together with Equation (\ref{rotation-fin}) and Equation (\ref{direct-fin}) imply that for $k$ big enough $$h_{\zeta_1(f_k)}(\mathit{supp}\, m^{+}_{f_k,p})\neq h_{\zeta_2(f_k)}(\mathit{supp}\, m^{+}_{f_k,z_N(f_k)}).$$

This is a contradiction because we were assuming that every $f_k$ was a $C^r$ discontinuity point for the center Lyapunov exponents and therefore there existed a disintegration for every $m^{+}_{f_k}$ which was bi-invariant. Therefore, we have proved that every $f\in \mathcal{B}^r_{*}(M)$ having a pinching hyperbolic periodic point can be $C^r$- approximated by diffeomorphisms which are $C^r$  continuity points for the center Lyapunov exponents. 

In order to conclude the proof of Theorem \ref{interior}, we need to prove that the argument above can be carry in a robust way. 

Fix $k$ big enough. If $g$ is a diffeomorphism close enough to $f_k$, then there exist $p(g)$ a pinching hyperbolic periodic point of $g$ with $n_p=per(p(g))$ which is close to $p$. Consider $a(g), b(g)\in \mathit{\mathbb{P}(E^c(p(g),g))}$ the projectivization of the two subspaces of $\mathit{E^c(p(g),g)}$ which are invariant by $\mathit{Dg^{n_p}_{p(g)}}$. We can suppose $a(g)$ is close to $a\in \mathit{\mathbb{P}(E^c(p,f_k))}$. Recall $a=\mathit{supp}\, m^{+}_p$.

Let $\zeta_k=[z_{0}(f_k),...,z_{N}(f_k)]$ be the $su$-path for $f_k$ given by Proposition \ref{nodes}. Then, for every $\epsilon>0$ there exists a $C^1$ neighborhood of $f_k$, $\mathcal{W}(f_k)$, such that if $g\in \mathcal{W}(f_k)$, then there exists an $su$-path for $g$, denoted by $\zeta(g)=[z_0(g),...,z_N(g)]$, with $z_0(g)=p(g)$ and such that $dist(z_i(f_k), z_i(g))<\epsilon$ for every $i\in \{0,...,N\}$.

Let $$\zeta_1(g)=[z_0(g),...,z_l(g)]\quad \text{and} \quad \zeta_2(g)=[z_N(g),...,z_l(g)].$$ For $i\in \{1,2\}$ we denote by $H_{\zeta_i(g)}$ the holonomy for $\mathit{G=Dg\vert E^c(g)}$ defined by $\zeta_i(g)$. Then, $$H_{\zeta_1(g)}\colon \mathit{E^c(p(g),g)}\to \mathit{E^c(z_l(g), g)}\quad \text{and} \quad H_{\zeta_2(g)}\colon \mathit{E^c(z_N(g), g)}\to \mathit{E^c(z_l(g), g)}.$$

If $\lambda\in (0,1)$ and $K\in \mathbb{N}$ are given by Proposition \ref{pert-holonomy}, then for every $k\geq K$, we will define a $C^r$ neighborhood of $f_k$, $\mathcal{V}_k(f_k)\subset \mathcal{B}^r_{*}(M)$. 

We say that $g\in \mathcal{V}_k(f_k)$ if it satisfies the following: 
\begin{enumerate}[label=(\alph*)]
\item $dist_{C^r}(g,f_k)<\epsilon$,
\item $dist_{C^1}(g,f_k)<\lambda^k$,
\item $d(a(g),a)< \lambda^k$,
\item the distance between the nodes of $\zeta(g)$ and $\zeta(f_k)$ is bounded by $\lambda^k$,
\item $d(H_{\zeta_1(g)}(c(g)), H_{\zeta_1(f_k)}(c))\leq \lambda^k + C\, d(c, c_k)$ for every $c\in \mathit{E^c(p,f_k)}$ and every $c(g)\in \mathit{E^c(p(g),g)}$,
\item $d(H_{\zeta_2(g)}(c_{k}(g)), H_{\zeta_2(f_k)}(c_k))\leq \lambda^k + C\, d(c, c_k)$ for every $c_k\in \mathit{E^c(z_N(f_k),f_k)}$ and every $c_{k}(g)\in \mathit{E^c(z_N(g),g)}$ and
\item if $q(g)=g^{-n_p k}(z_N(g))$, then $dist(q(g), q_k)< \lambda^k$. 
\end{enumerate}

The existence of a neighborhood of $f_k$ where properties (e) and (f) hold is a consequence of Proposition 3.4 and Corollary 3.5 of \cite{M}.

The proof of Theorem \ref{interior} will follow if we prove that there exists $m\in \mathbb{N}$ such that every $g\in \mathcal{V}_m(f_m)$ is a $C^r$ continuity point for the center Lyapunov exponents. In order to do that, we suppose that for every $k\geq K$ there exists $g_k\in \mathcal{V}_k(f_k)$ such that $g_k$ is a $C^r$ discontinuity point for the center Lyapunov exponents. The properties (a)-(g) were chosen to allow us to extend the same argument that we used above for $f_k$, now for the sequence $g_k$. Therefore, from the assumption of $g_k$ being a $C^r$ discontinuity point for the center Lyapunov exponents for every $k$, we arrive to a contradiction. This implies that there exists $m\in \mathbb{N}$ such that every $g\in \mathcal{V}_m(f_m)$ is a $C^r$ continuity point for the center Lyapunov exponents as we wanted to prove.
\qed

\end{document}